\definecolor{Gray}{gray}{0.80}
\definecolor{LightGray}{gray}{0.90}
\newcommand{\cA}{\mathcal{A}}
\newcommand{\cB}{\mathcal{B}}
\newcommand{\cD}{\mathcal{D}}
\newcommand{\cF}{\mathcal{F}}
\newcommand{\cH}{\mathcal{H}}
\newcommand{\cK}{\mathcal{K}}
\newcommand{\cL}{\mathcal{L}}
\newcommand{\cP}{\mathcal{P}}
\newcommand{\cQ}{\mathcal{Q}}
\newcommand{\cR}{\mathcal{R}}
\newcommand{\cS}{\mathcal{S}}
\newcommand{\cT}{\mathcal{T}}
\newcommand{\cU}{\mathcal{U}}
\newcommand{\cX}{\mathcal{X}}
\newcommand{\cY}{\mathcal{Y}}
\newcommand{\fD}{\mathfrak{D}}
\newcommand{\fF}{\mathfrak{F}}
\newcommand{\fS}{\mathfrak{S}}
\newcommand{\bE}{\mathbb{E}}
\newcommand{\bP}{\mathbb{P}}
\newcommand{\bQ}{\mathbb{Q}}
\newcommand{\bR}{\mathbb{R}}
\newcommand{\PR}{\mathbb{P}}
\newcommand{\bONE}{\mathbbm{1}}
\newcommand{\dd}{ \mathrm{d}}
\DeclareMathOperator*{\LIM}{LIM} 
\DeclareMathOperator*{\subLIM}{subLIM}
\DeclareMathOperator*{\superLIM}{superLIM}
\DeclareMathOperator*{\LIMSUP}{LIM \, SUP}
\DeclareMathOperator*{\LIMINF}{LIM \, INF}
\renewcommand{\epsilon}{\varepsilon}
\newcommand{\vn}[1]{\left| \! \left| #1\right| \! \right|}
\newcommand{\ip}[2]{\langle #1,#2\rangle}
\numberwithin{equation}{section}
\newtheorem{theorem}{Theorem}[section]
\newtheorem{lemma}[theorem]{Lemma}
\newtheorem{proposition}[theorem]{Proposition}
\theoremstyle{definition}
\newtheorem{definition}[theorem]{Definition}
\newtheorem{remark}[theorem]{Remark}
\newtheorem{assumption}[theorem]{Assumption}
\newtheorem{condition}[theorem]{Condition}
\newtheorem{question}[theorem]{Question}
\title{The exponential resolvent of a Markov process and large deviations for Markov processes via Hamilton-Jacobi equations}
\author{Richard C. Kraaij\thanks{Delft Institute of Applied Mathematics, Delft University of Technology, Van Mourik Broekmanweg 6, 2628 XE Delft, The Netherlands. \emph{E-mail address}: r.c.kraaij@tudelft.nl}}
\date{\today}
\begin{document}

\maketitle

\begin{abstract}
	We study the Hamilton-Jacobi equation $f - \lambda Hf = h$, where $H f = e^{-f}Ae^f$ and where $A$ is an operator that corresponds to a well-posed martingale problem.
	
	We identify an operator that gives viscosity solutions to the Hamilton-Jacobi equation, and which can therefore be interpreted as the resolvent of $H$. The operator is given in terms of optimization problem where the running cost is a path-space relative entropy. 
	
	Finally, we use the resolvents to give a new proof of the abstract large deviation result of Feng and Kurtz \cite{FK06}.
	
\noindent \emph{Keywords: Non-linear resolvent \and Hamilton-Jacobi equations \and Large deviations; \and Markov processes}

\noindent \emph{MSC2010 classification: primary 60F10; 47H20; secondary 60J25; 60J35; 49L25} 
\end{abstract}



\section{Introduction}

Let $E$ be Polish and let $A \subseteq C_b(E) \times C_b(E)$ be an operator such that the martingale problem for $A$ is well posed. In this paper, we study non-linear operator $H \subseteq C_b(E) \times C_b(E)$ given by all pairs $(f,g)$ such that
\begin{equation} \label{eqn:Hamiltonian_martingales}
t \mapsto \exp\left\{ f(X(t)) - f(X(0)) - \int_0^t g(X(s)) \dd s\right\}
\end{equation}
is a martingale with respect to $\cF_t := \sigma(X(s) \, | \, s \leq t)$ and where $X$ a solution of a well-posed martingale problem for $A$ (If $e^f \in \cD(A)$, then $(f,e^{-f}Ae^{f}) \in H$). 

The operator $H$, the martingales of \eqref{eqn:Hamiltonian_martingales} corresponding to $H$, and the semigroup 
\begin{equation} \label{eqn:intro_semigroup}
V(t)f(x) = \log \bE\left[e^{f(X(t))} \, \middle| \, X(0) =x\right].
\end{equation}
that formally correspond to $H$ play (possibly after rescaling) a key role in the theory of stochastic control and large deviations of Markov processes, see e.g. \cite{Fl77,Sh85,FlSo86,DuIiSo90,Pu97,Ac00,Pu01,Ac04,FK06}. 

Consider a sequence of Markov processes $X_n$. \cite{FK06} showed in their extensive monograph on the large deviations for Markov processes that the convergence of the non-linear semigroups $V_n(t)$ defined by $V_n(t)f(x) = \frac{1}{n}\log \bE\left[e^{nf(X_n(t))} \, \middle| \, X_n(0) =x\right]$ to some appropriate limiting semigroup $V(t)$ is a major step in establishing path-space large deviations for the sequence $X_n$.

It is well-known in the theory of linear semigroups that the convergence of semigroups $V_n(t)$ to $V(t)$ is essentially implied by the convergence of their infinitesimal generators `$H_n f = \partial_t V_n(t)f |_{t = 0}$' to  `$H f = \partial_t V(t)f |_{t = 0}$', see e.g. \cite{Tr58,Ka58,Ku74}. The results also hold for the non-linear context. However, in the non-linear setting, the relation between semigroup and generator is less clear. To be precise, $V(t)$ is generated by $H$ if we have a resolvent 
\begin{equation} \label{eqn:intro_naieve_resolvent}
R(\lambda) := (\bONE - \lambda H)^{-1}, \qquad \lambda > 0,
\end{equation}
which approximates the semigroup in the following way
\begin{equation}\label{eqn:intro_semigroup_approx_by_resolvent_linear}
V(t)h = \lim_m R\left(\frac{t}{m}\right)^m h, \qquad \forall \, h \in C_b(E), t \geq 0.
\end{equation}
To be able to effectively use the Trotter-Kato-Kurtz approximation results in the theory of large deviations or stochastic control, it is therefore important to have a grip on the resolvent that connects the semigroup $V(t)$ to the operator $H$ via \eqref{eqn:intro_naieve_resolvent} and \eqref{eqn:intro_semigroup_approx_by_resolvent_linear}.

\smallskip

An important first step in this direction was made in \cite{FK06} by replacing the Markov process $X$ by an approximating jump process with bounded generator. Indeed, in the case of bounded $A$ can establish the existence of \eqref{eqn:intro_naieve_resolvent} by using fix-point arguments. \cite{FK06} then proceed to establish path-space large deviations for sequences of Markov processes using probabilistic approximation arguments, semigroup convergence (Trotter-Kato-Kurtz) and the theory of viscosity solutions to characterize the limiting semigroup. 

\smallskip

A second observation is that in the context of diffusion processes, or for operators $H$ that are first-order, it is not clear that one can actually invert $(\bONE - \lambda H)$ due to issues with the domain: solutions of the Hamilton-Jacobi equation $f - \lambda H f = h$ can have non-differentiable points. However, one can often give a family of operators $R(\lambda)$ in terms of a deterministic control problem that yield viscosity solutions to the equation $f - \lambda Hf = h$. An extension $\widehat{H}$ of $H$ can then be defined in terms of $R$ such that the operator $\widehat{H}$ and the semigroup $V(t)$ are connected as in \eqref{eqn:intro_naieve_resolvent} and \eqref{eqn:intro_semigroup_approx_by_resolvent_linear}.

\smallskip

This paper therefore has a two-fold aim.
\begin{enumerate}[(1)]
	\item Identify an operator $R(\lambda)$ in terms of a control problem, which yields viscosity solutions to $f - \lambda Hf = h$ where $H$ is in terms of the martingales of \eqref{eqn:Hamiltonian_martingales}. This we aim to do in the context of \textit{general} (Feller) Markov processes. \label{item:intro_identify_resolvent}
	\item Give a new proof of the main large deviation result of \cite{FK06} by using the operators $R(\lambda)$. \label{item:intro_reprove_FK}
\end{enumerate}
Regarding \ref{item:intro_identify_resolvent}, we will show that the operators $R(\lambda)$ defined as
\begin{equation} \label{eqn:intro_resolvent}
R(\lambda)h(x) = \sup_{\bQ \in \cP(D_E(\bR^+))} \left\{ \int_0^\infty \left( \int h(X(t)) \bQ(\dd X) - S_t(\bQ \, | \, \PR_x ) \right) \tau_{\lambda}(\dd t)\right\}
\end{equation}
give viscosity solutions to the Hamilton-Jacobi equation for $H$. That is: $R(\lambda)h$ is a viscosity solution to
\begin{equation} \label{eqn:intro_HJ}
f - \lambda H f = h, \qquad h \in C_b(E), \lambda > 0.
\end{equation}
Here $S_t(\bQ \, | \PR_x)$ is the relative entropy of $\bQ$ with respect to the solution of the martingale problem started at $x$ evaluated up to time $t$, and $\tau_\lambda$ is the law of an exponential random variable with mean $\lambda$.

\smallskip


Our proof that $R(\lambda)$ is a viscosity solutions to the Hamilton-Jacobi equation will be carried out using a variant of a result by \cite{FK06} extended to an abstract context in \cite{Kr19}. The family $\{R(\lambda)\}_{\lambda > 0}$ of \eqref{eqn:intro_resolvent} gives viscosity solutions to \eqref{eqn:intro_HJ} if
\begin{enumerate}
	\item \label{item:intro_classical_inverse} for all $(f,g) \in H$ we have $R(\lambda)(f-\lambda g) = f$, 
	\item \label{item:intro_pseudo_resolvent} $R(\lambda)$ is contractive and a pseudo-resolvent. That is: $\vn{R(\lambda)}\leq 1$ and for all $h \in C_b(E)$ and $0 < \alpha < \beta$ we have 
	\begin{equation*}
	R(\beta)h = R(\alpha) \left(R(\beta)h - \alpha \frac{R(\beta)h - h}{\beta} \right).
	\end{equation*}
\end{enumerate}
In other words: if $R(\lambda)$ serves as a classical left-inverse to $\bONE- \lambda H$ and is also a pseudo-resolvent, then it is a viscosity right-inverse of $(\bONE- \lambda H)$.

To finish the analysis towards goal \ref{item:intro_identify_resolvent}, we need to establish that our resolvent approximates the semigroup:
\begin{enumerate}[resume]
	\item For the resolvent in \eqref{eqn:intro_resolvent} it holds that $V(t)h = \lim_m R\left(\frac{t}{m}\right)^m h$, where the semigroup is given by \eqref{eqn:intro_semigroup}. \label{item:intro_resolvent_approximation_semigroup}
\end{enumerate}
This result follows from the intuition that the sum of $n$ independent exponential random variables of mean $t/n$ converges to $t$. The difficulty lies in analysing the concatenation of suprema as in \eqref{eqn:intro_resolvent}, which will be carried out using suitable upper and lower bounds.

\smallskip

The second goal, \ref{item:intro_reprove_FK}, of this paper is to reprove the main large deviation result of \cite{FK06}. The general procedure is as follows:
\begin{itemize}
	\item Given exponential tightness, one can restrict the analysis to the finite-dimensional distributions.
	\item One establishes the large deviation principle for finite-dimensional distributions by assuming this is true at time $0$ and by proving that rescaled versions of the semigroups \eqref{eqn:intro_semigroup} of conditional log-moment generating functions converge.
	\item One proves convergence of the infinitesimal generators $H_n \rightarrow H$ and establishes well-posedness of the Hamilton-Jacobi equation $f - \lambda Hf = h$ to obtain convergence of the semigroups.
\end{itemize}
This paper follows the same general strategy, but establishes the third step in a new way. Instead of working with the resolvent of approximating Markov jump processes, the proof in this paper is based on a semigroup approximation argument of \cite{Kr19} combined with the explicit identification of the resolvents corresponding to the non-linear operators $H_n$.

\smallskip

We give a short comparison of the result in this paper to the main result in \cite{FK06}. Our condition on the convergence of Hamiltonians $H_n \rightarrow H$ is slightly simpler than the one in \cite{FK06}. This is due to being able to work with the Markov process itself instead of a approximating jump process. The result in this paper is a bit weaker in the sense that we assume the solutions to the martingale problems are continuous in the starting point, as opposed to only assuming measurability in \cite{FK06}. This is to keep the technicalities as simple as possible, and it is expected this can be generalized. In addition, \cite{FK06} establishes a result for discrete time processes, which we do not carry out here. This extension should be possible too.

\smallskip

The paper is organized as follows. We start in Section \ref{section:preliminaries} with preliminary definitions. In Section \ref{section:resolvent_and_HJ} we state the main results on the resolvent. In addition to the announced results \ref{item:intro_classical_inverse}, \ref{item:intro_pseudo_resolvent} and \ref{item:intro_resolvent_approximation_semigroup} we also obtain that $R(\lambda)$ is a continuous map on $C_b(E)$. Proofs of continuity of $R(\lambda)$ in addition to various other regularity properties are given in Section \ref{section:regularity}, the proofs of  \ref{item:intro_classical_inverse}, \ref{item:intro_pseudo_resolvent} and \ref{item:intro_resolvent_approximation_semigroup} are given in Section \ref{section:proofs_main_results}.

\smallskip

In Section \ref{section:Markov_LDP_simple} we state a simple version of the large deviation result. A more general version and its proof are given in Section \ref{section:Markov_LDP}.

\section{Preliminaries} \label{section:preliminaries}

Let $E$ be a Polish space. $C_b(E)$ denotes the space of continuous and bounded functions. Denote by $\cB(E)$ the Borel $\sigma$-algebra of $E$. Denote by $M(E)$ and $M_b(E)$ the spaces of measurable and bounded measurable functions $f : E \rightarrow [-\infty,\infty]$ and denote by $\cP(E)$ the space of Borel probability measures on $E$. $\vn{\cdot}$ will denote the supremum norm on $C_b(E)$. In addition to considering uniform convergence we consider the compact-open and strict topologies:
\begin{itemize}
	\item The \textit{compact open} topology $\kappa$ on $C_b(E)$ is generated by the semi-norms $p_K(f) = \sup_{x \in K} |f(x)|$, where $K$ ranges over all compact subsets of $E$.
	\item The \textit{strict} topology $\beta$ on $C_b(E)$ is generated by all semi-norms
	\begin{equation*}
	p_{K_n,a_n}(f) := \sup_n a_n \sup_{x \in K_n} |f(x)|
	\end{equation*}
	varying over non-negative sequences $a_n$ converging to $0$ and sequences of compact sets $K_n \subseteq E$. See e.g. \cite{Se72,Ca11,Kr19d}. 
\end{itemize}
As we will often work with the convergence of sequences for the strict topology, we characterize this convergence and give a useful notion of taking closures.

A sequence $f_n$ converges to $f$ for the strict topology if and only if $f_n$ converges to $f$ bounded and uniformly on compacts (buc): 
\begin{equation*}
\sup_n \vn{f_n} < \infty, \qquad \forall \text{ compact } K \subseteq E: \quad \lim_{n \rightarrow \infty} \sup_{x \in K} \left| f_n(x) - f(x)\right| = 0.
\end{equation*} 

Let $B_r \subseteq C_b(E)$ be the collection of functions $f$ such that $\vn{f} \leq r$. We say that $\widehat{D}$ is the \textit{quasi-closure} of $D \subseteq C_b(E)$ if $\widehat{D} = \bigcup_{r > 0} \widehat{D}_r$, where $\widehat{D}_r$ is the strict closure of $D \cap B_r$.

\smallskip

We denote by $D_E(\bR^+)$ the \textit{Skorokhod space} of trajectories $X : \bR^+ \rightarrow E$ that have left limits and are right-continuous. We equip this space with its usual topology, see \cite[Chapter 3]{EK86}. As $D_E(\bR^+)$ is our main space of interest, we write $\cP := \cP(D_E(\bR^+))$.

\smallskip

Let $\cX$ be a general Polish space (e.g. $E$ or $D_E(\bR^+)$). For two measures $\mu,\nu \in \cP(\cX)$ we denote by
\begin{equation*}
S(\nu \, | \, \mu) = 
\begin{cases}
\int \log \frac{\dd \nu}{\dd \mu} \, \dd \nu & \text{if } \nu << \mu, \\
\infty & \text{otherwise},
\end{cases}
\end{equation*}
the \textit{relative entropy} of $\nu$ with respect to $\mu$. For any sub-sigma algebra $\cF$ of $\cB(\cX)$, we denote by $S_\cF$ the relative entropy when the measures are restricted to the $\sigma$-algebra $\cF$. In the text below, we will often work with the space $D_E(\bR^+)$. We will then write $S_t$ for the relative entropy when we restrict to $\cF_t := \sigma\left(X(s) \, | \, s \leq t \right)$.

Finally, for $\lambda > 0$, denote by $\tau_\lambda \in \cP(\bR^+)$ the law of an exponential random variable with mean $\lambda$:
\begin{equation*}
\tau_\lambda(\dd t) = \bONE_{\{t \geq 0\}} \lambda^{-1} e^{-\lambda^{-1} t} \dd t.
\end{equation*}

\subsection{The martingale problem}

\begin{definition}[The martingale problem]
	Let $A \colon \cD(A) \subseteq C_b(E) \rightarrow C_b(E)$ be a linear operator. For $(A,\cD(A))$ and a measure $\nu \in \cP(E)$, we say that $\PR \in \cP(D_E(\bR^+))$ solves the \textit{martingale problem} for $(A,\nu)$ if $\PR \, \circ \, X(0)^{-1} = \nu$ and if for all $f \in \cD(A)$ 
	\begin{equation*}
	M_f(t) := f(X(t)) - f(X(0)) - \int_0^t Af(X(s)) \dd s
	\end{equation*}
	is a martingale with respect to its natural filtration $\cF_t := \sigma\left(X(s) \, | \, s \leq t \right)$ under $\PR$. 
	
	\smallskip
	
	 We say that \textit{uniqueness} holds for the martingale problem if for every $\nu \in \cP(\cX)$ the set of solutions of the martingale problem that start at $\nu$ has at most one element. Furthermore, we say that the martingale problem is \textit{well-posed} if this set contains exactly one element for every $\nu$.
\end{definition}

\subsection{Viscosity solutions of Hamilton-Jacobi equations}

Consider an operator $B \subseteq C_b(E) \times C_b(E)$. If $B$ is single valued and $(f,g) \in B$, we write $Bf := g$. We denote $\cD(B)$ for the domain of $B$ and $\cR(B)$ for the range of $B$.

\begin{definition}
	Let $B \subseteq C_b(E) \times C_b(E)$. Fix $h \in C_b(E)$ and $\lambda > 0$. Consider the equation
	\begin{equation} \label{eqn:HJ_viscosity_solution_def}
	f - \lambda B f = h
	\end{equation}
	\begin{itemize}
		\item We say that a bounded upper semi-continuous function $u : E \rightarrow \bR$ is a \textit{subsolution} of equation \eqref{eqn:HJ_viscosity_solution_def} if for all $(f,g) \in B$  there is a sequence $x_n \in E$ such that
		\begin{equation} \label{eqn:subsol_optimizing_sequence_simple}
		\lim_{n \rightarrow \infty} u(x_n) - f(x_n)  = \sup_x u(x) - f(x),
		\end{equation}
		and
		\begin{equation} \label{eqn:subsol_sequence_outcome_simple}
		\limsup_{n \rightarrow \infty} u(x_n) - g(x_n) - h(x_n) \leq 0.
		\end{equation}
		\item We say that a bounded lower semi-continuous function $v : E \rightarrow \bR$ is a \textit{supersolution} of equation \eqref{eqn:HJ_viscosity_solution_def} if for all $(f,g) \in B$  there is a sequence $x_n \in E$ such that
		\begin{equation} \label{eqn:supersol_optimizing_sequence_simple}
		\lim_{n \rightarrow \infty} v(x_n) - f(x_n)  = \inf_x v(x) - f(x),
		\end{equation}
		and
		\begin{equation} \label{eqn:supersol_sequence_outcome_simple}
		\liminf_{n \rightarrow \infty} v(x_n) - g(x_n) - h(x_n) \geq 0.
		\end{equation}
		\item We say that $u$ is a \textit{solution} of \eqref{eqn:HJ_viscosity_solution_def}  if it is both a subsolution and a supersolution.
		\item We say that \eqref{eqn:HJ_viscosity_solution_def} satisfies the \textit{comparison principle} if for every subsolution $u$, we have
		\begin{equation} \label{eqn:comparison_estimate_simple}
		\sup_x u(x) - v(x) \leq 0.
		\end{equation}
	\end{itemize}
\end{definition}
Note that the comparison principle implies uniqueness of viscosity solutions.

\subsection{Convergence of operators}

\begin{definition} \label{definition_extended_limit}
	For a sequence of operators $B_n \subseteq C_b(E) \times C_b(E)$ and $B \subseteq C_b(E) \times C_b(E)$ we say that
	$B$ is subset of the \textit{extended limit} of $B_n$, denoted by $B \subseteq ex-\LIM B_n$ if for all $(f,g) \in B$ there are $(f_n,g_n) \in B_n$ such that $\beta-\lim_n f_n = f$ and $\beta-\lim g_n = g$.
\end{definition}

\subsection{Large deviations}

\begin{definition}
	Let $\{X_n\}_{n \geq 1}$ be a sequence of random variables on a Polish space $\cX$. Furthermore, consider a function $I : \mathcal{X} \rightarrow [0,\infty]$ and a sequence $\{r_n\}_{n \geq 1}$ of positive real numbers such that $r_n \rightarrow \infty$. We say that
	\begin{itemize}
		\item  
		the function $I$ is a \textit{rate-function} if the set $\{x \, | \, I(x) \leq c\}$ is closed for every $c \geq 0$. We say $I$ is good if the sub-level sets are compact.
		\item 
		the sequence $\{X_n\}_{n\geq 1}$ is \textit{exponentially tight} at speed $r_n$ if, for every $a \geq 0$, there exists a compact set $K_a \subseteq \mathcal{X}$ such that $\limsup_n r_n^{-1} \log \, \PR[X_n \notin K_a] \leq - a$.
		\item 
		the sequence $\{X_n\}_{n\geq 1}$ satisfies the \textit{large deviation principle} with speed $r_n$ and good rate-function $I$ if for every closed set $A \subseteq \mathcal{X}$, we have 
		\begin{equation*}
		\limsup_{n \rightarrow \infty} \, r_n^{-1} \log \PR[X_n \in A] \leq - \inf_{x \in A} I(x),
		\end{equation*}
		and if for every open set $U \subseteq \mathcal{X}$, 
		\begin{equation*}
		\liminf_{n \rightarrow \infty} \, r_n^{-1} \log \PR[X_n \in U] \geq - \inf_{x \in U} I(x).
		\end{equation*}
	\end{itemize}
\end{definition}

\section{The non-linear resolvent of a Markov process} \label{section:resolvent_and_HJ}

Our main result is based on the assumption that the martingale problem is well-posed and that the solution map in terms of the starting point is continuous.

\begin{condition} \label{condition:Markov_single}
	$A \subseteq C_b(E) \times C_b(E)$ is an operator such that the martingale problem for $A \subseteq C_b(E) \times C_b(E)$ is well-posed. Denote by $\PR_x \in D_E(\bR^+)$ the solution that satisfies $X(0) = x$, $\PR_x$ almost surely. The map $x \mapsto \PR_x$ is assumed to be continuous for the weak topology on $\cP = \cP(D_E(\bR^+))$.
	
\end{condition}

We introduce the triplet of key objects in semi-group theory: generator, resolvent, and semigroup.

\begin{definition} \label{definition:key_definitions_H_resolvent_semigroup}
	
	\begin{enumerate}
		\item Let $H$ be a collection of pairs $(f,g) \in C_b(E) \times C_b(E)$ such that for all $x \in E$ 
		\begin{equation*}
		t \mapsto \exp\left\{f(X(t)) - f(X(0)) - \int_0^t g(X(s)) \dd s \right\}
		\end{equation*}
		are martingales with respect to the filtration $\cF_t := \sigma\left(X(s) \, | \, s \leq t \right)$ and law $\PR_x$.
		\item For $\lambda > 0$ and $h \in C_b(E)$, define
		\begin{equation*}
		R(\lambda)h(x) = \sup_{\bQ \in \cP} \left\{ \int_0^\infty \left( \int h(X(t)) \bQ(\dd X) - S_t(\bQ \, | \, \PR_x ) \right) \tau_{\lambda}(\dd t)\right\}.
		\end{equation*}
		\item For $t \geq 0$ and $h \in C_b(E)$, define
		\begin{equation*}
		V(t)f(x) = \log \bE_x\left[e^{h(X(t))}\right] = \sup_{\bQ \in \cP} \left\{ \int h(X(t)) \bQ(\dd X) - S(\bQ \, | \, \PR_x) \right\}.
		\end{equation*}
		Note that the final equality follows by Lemma \ref{lemma:DV_variational}.
	\end{enumerate}
\end{definition}

The following is an immediate consequence of \cite[Lemma 4.3.2]{EK86}.

\begin{lemma}
	Let Condition \ref{condition:Markov_single} be satisfied. We have
	\begin{equation*}
	\left\{(f, e^{-f}g) \, \middle| \, (e^f,g) \in A\right\} \subseteq H. 
	\end{equation*}
\end{lemma}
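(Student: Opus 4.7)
The plan is to obtain the exponential martingale directly from the additive martingale supplied by the martingale problem, via the Ethier--Kurtz Lemma 4.3.2 referenced in the statement. Suppose $(e^f, g) \in A$; by the convention used throughout the paper (cf.\ the parenthetical remark in the introduction), this implicitly means $f \in C_b(E)$ with $e^f \in \cD(A)$ and $A e^f = g$. First I would check that $(f, e^{-f}g)$ belongs to $C_b(E) \times C_b(E)$: since $f \in C_b(E)$, $e^f$ is bounded in $[e^{-\vn{f}}, e^{\vn{f}}]$, so $e^{-f}$ is continuous and bounded, and therefore $e^{-f} g$ is in $C_b(E)$.

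Next, fix $x \in E$ and let $\PR_x$ denote the unique solution to the martingale problem started at $x$. Since $(e^f,g) \in A$, the process
\begin{equation*}
M(t) = e^{f(X(t))} - e^{f(X(0))} - \int_0^t g(X(s))\,\dd s
\end{equation*}
is a $\PR_x$-martingale. Setting $\phi := e^f$, the function $\phi \in \cD(A)$ is continuous, bounded above, and bounded away from $0$, and $A\phi/\phi = e^{-f}g$ is bounded. These are exactly the hypotheses of \cite[Lemma 4.3.2]{EK86}, which then yields that
\begin{equation*}
\frac{\phi(X(t))}{\phi(X(0))} \exp\left\{-\int_0^t \frac{A\phi(X(s))}{\phi(X(s))}\,\dd s\right\}
= \exp\left\{f(X(t)) - f(X(0)) - \int_0^t e^{-f(X(s))}g(X(s))\,\dd s\right\}
\end{equation*}
is a martingale with respect to $\cF_t$ under $\PR_x$.

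By the definition of $H$ in Definition \ref{definition:key_definitions_H_resolvent_semigroup}, this is exactly the statement that $(f, e^{-f}g) \in H$. Since $x$ was arbitrary, the inclusion $\{(f, e^{-f}g) \mid (e^f, g) \in A\} \subseteq H$ follows. The only real work in the argument is the verification that the relevant quantities are bounded so that Lemma 4.3.2 applies and the local martingale obtained is genuinely a martingale; this is immediate from $f, g \in C_b(E)$ and the positivity of $e^f$. No additional regularity of the semigroup or resolvent is needed at this stage.
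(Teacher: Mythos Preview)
Your proposal is correct and follows exactly the paper's approach: the paper simply states that the lemma is an immediate consequence of \cite[Lemma 4.3.2]{EK86}, and you have spelled out precisely that application, including the verification that $e^f$ is bounded and bounded away from zero so that the hypotheses are met.
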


The first main result of this paper is the following.

\begin{theorem} \label{theorem:Markov_solving_HJ}
	Let Condition \ref{condition:Markov_single} be satisfied. For each $h \in C_b(E)$ and $\lambda > 0$ the function $R(\lambda)h$ is a viscosity solution to $f - \lambda H f = h$.
\end{theorem}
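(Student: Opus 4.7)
The plan is to invoke the abstract criterion of \cite{Kr19} (restated as items \ref{item:intro_classical_inverse} and \ref{item:intro_pseudo_resolvent} in the introduction): $R(\lambda)h$ will automatically be a viscosity solution of $f - \lambda H f = h$ once we verify that (a) $R(\lambda)(f - \lambda g) = f$ for all $(f,g) \in H$, and (b) $R(\lambda)$ is contractive and a pseudo-resolvent. There is therefore no direct test-function argument to be made; the theorem reduces to these algebraic properties of the operator $R(\lambda)$.

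Contractivity is essentially by inspection. Plugging in $\bQ = \PR_x$ forces $S_t \equiv 0$ and gives the lower bound $R(\lambda)h(x) \geq -\vn{h}$; for any $\bQ$, the pointwise bound $\int h(X(t)) \bQ(\dd X) \leq \vn{h}$ together with $S_t \geq 0$ gives the matching upper bound.

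For the classical left-inverse property, I would use that for $(f,g) \in H$ the martingale \eqref{eqn:Hamiltonian_martingales} has constant expectation $1$, so $\log \bE_x[\exp\{f(X(t)) - \int_0^t g(X(s)) \dd s\}] = f(x)$ for every $t, x$. Applying the Donsker--Varadhan variational identity (Lemma \ref{lemma:DV_variational}) on $\cF_t$ with this test function yields, for every $\bQ \in \cP$,
\begin{equation*}
\int \left( f(X(t)) - \int_0^t g(X(s)) \dd s \right) \bQ(\dd X) - S_t(\bQ \mid \PR_x) \leq f(x),
\end{equation*}
with equality for the exponentially tilted measure $\dd \bQ^* / \dd \PR_x |_{\cF_t} = \exp\{f(X(t)) - f(x) - \int_0^t g(X(s)) \dd s\}$, which is consistent across $t$ precisely because the density is a $\PR_x$-martingale. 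Integrating this inequality against $\tau_\lambda(\dd t)$ and swapping the order of integration via Fubini, using the identity $\int_0^\infty \int_0^t g(X(s)) \dd s \, \tau_\lambda(\dd t) = \lambda \int_0^\infty g(X(s)) \, \tau_\lambda(\dd s)$, produces $R(\lambda)(f - \lambda g) \leq f$; integrating the matching equality at $\bQ^*$ gives the reverse direction.

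The main obstacle is the pseudo-resolvent identity. I expect to establish it by decomposing an $\exp(\beta)$ waiting time into an $\exp(\alpha)$ initial segment plus an exponential residual, combined with the Markov property of $\PR_x$ and the chain-rule additivity of relative entropy on path space across $\cF_{T_\alpha}$. Conditioning an arbitrary $\bQ \in \cP$ on this intermediate $\sigma$-algebra and reassembling the inner optimization as an $R(\beta)h$-valued payoff yields one direction; gluing near-optimizers of the inner and outer problems into a single element of $\cP$ yields the other. The delicate technical points are measurability of the optimal inner measure in the conditioning random variable and the commutation of the supremum with the path-space disintegration; both should be reachable under the continuity of $x \mapsto \PR_x$ guaranteed by Condition \ref{condition:Markov_single}.
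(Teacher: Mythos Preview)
Your proposal is correct and follows essentially the same route as the paper. The paper likewise reduces the theorem to the abstract criterion of \cite{Kr19} and verifies exactly the three ingredients you list: contractivity (immediate), the left-inverse identity $R(\lambda)(f-\lambda g)=f$ via the Donsker--Varadhan variational formula, the martingale property of \eqref{eqn:Hamiltonian_martingales}, and the exponential integration-by-parts $\lambda\int z(t)\,\tau_\lambda(\dd t)=\int\!\int_0^t z(s)\,\dd s\,\tau_\lambda(\dd t)$; and the pseudo-resolvent identity by concatenating optimizers in one direction and decomposing the optimizer of $R(\beta)h$ at a random intermediate time in the other, using the chain rule for relative entropy and a measurable-selection lemma for the inner optimizer. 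Your anticipation of the measurable-selection issue and the entropy disintegration is exactly on target; the paper handles these in dedicated lemmas, and uses exact (not near-) optimizers, which exist by coercivity of $\cS_\lambda$.
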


The proof of this result follows in Section \ref{section:proofs_main_results}. To facilitate further use of the non-linear resolvent, we establish also that
\begin{enumerate}
	\item The map $R(\lambda)$ maps $C_b(E)$ into $C_b(E)$.
	\item The operators $R(\lambda)$ act as the resolvent of the semigroup $\{V(t)\}_{t \geq 0}$.
\end{enumerate}

These properties will allow us to use our main result to establish large deviations in a later part of the paper, see Section \ref{section:Markov_LDP}. We state (a) and (b) as Propositions.

\begin{proposition} \label{proposition:continuity_of_resolvent}
	For every $\lambda > 0$ and $h \in C_b(E)$ we have $R(\lambda) h \in C_b(E)$.
\end{proposition}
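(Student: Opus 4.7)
Proof plan.

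Boundedness. Apply the Donsker--Varadhan variational identity (Lemma \ref{lemma:DV_variational}) pointwise in $t$ to $f = h(X(t))$ seen as a bounded $\cF_t$-measurable functional:
\begin{equation*}
\int h(X(t))\bQ(\dd X) - S_t(\bQ \,|\, \PR_x) \leq \log \bE_x[e^{h(X(t))}] \leq \vn{h}.
\end{equation*}
Integrating against $\tau_\lambda(\dd t)$ gives $R(\lambda)h(x) \leq \vn{h}$. The choice $\bQ = \PR_x$ gives the matching bound $R(\lambda)h(x) \geq -\vn{h}$. So $R(\lambda)h$ is bounded by $\vn{h}$.

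For continuity, fix a sequence $x_n \to x$ in $E$ and abbreviate $F(y,\bQ) := \int_0^\infty\bigl[\int h(X(t))\bQ(\dd X) - S_t(\bQ\,|\,\PR_y)\bigr]\tau_\lambda(\dd t)$. I plan to prove upper and lower semicontinuity of $x \mapsto R(\lambda)h(x)$ separately.

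Upper semicontinuity. The functional $F$ is jointly upper semi-continuous on $E \times \cP$: the first summand is continuous in $\bQ$ since $X \mapsto h(X(t))$ is $\bQ$-a.s.\ continuous for $\tau_\lambda$-almost every $t$ and is bounded by $\vn{h}$, while $(y,\bQ) \mapsto S_t(\bQ\,|\,\PR_y)$ is jointly lower semi-continuous by the classical LSC of relative entropy combined with Condition \ref{condition:Markov_single}; the reverse Fatou lemma handles the outer $\tau_\lambda$-integral. Let $\bQ_n$ be near-optimal for $R(\lambda)h(x_n)$. Boundedness of $R(\lambda)h$ forces $\int_0^\infty S_t(\bQ_n\,|\,\PR_{x_n})\tau_\lambda(\dd t) \leq C:=2\vn{h}+1$, and monotonicity of $t \mapsto S_t$ then yields $S_T(\bQ_n\,|\,\PR_{x_n}) \leq C e^{T/\lambda}$ for every $T > 0$. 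Since $\{\PR_{x_n}\}$ is tight (by continuity of $x \mapsto \PR_x$) and level sets of relative entropy over a tight family of reference measures are uniformly tight, $\{\bQ_n|_{D_E([0,T])}\}$ is tight for every $T$, giving tightness of $\{\bQ_n\}$ in $\cP$. Passing to a weak sublimit $\bQ^\ast$ and invoking the joint USC of $F$ yields $\limsup_n R(\lambda)h(x_n) \leq F(x,\bQ^\ast) \leq R(\lambda)h(x)$.

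Lower semicontinuity is the main obstacle, because one cannot simply reuse a near-optimizer $\bQ^\ast$ at $x$ at the points $x_n$: the entropy term is anchored to $\PR_{x_n}$. Given $\epsilon > 0$ and $\bQ^\ast$ with $F(x,\bQ^\ast) \geq R(\lambda)h(x) - \epsilon$, the plan is to truncate to a large finite horizon $T$ (tail controlled by the exponential decay of $\tau_\lambda$ against the bound $S_t \leq S_\infty(\bQ^\ast\,|\,\PR_x) < \infty$), approximate the density $\Lambda := \dd \bQ^\ast / \dd \PR_x|_{\cF_T}$ by a bounded continuous functional $\Lambda_\delta$ on $D_E([0,T])$ in an $L^1(\PR_x)$-sense, and define $\bQ_n$ as the tilt of $\PR_{x_n}|_{\cF_T}$ by $\Lambda_\delta$ (after renormalizing and then extending beyond $T$ via regular $\PR_{x_n}$-conditioning). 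Continuity of $x \mapsto \PR_x$ then yields weak convergence $\bQ_n \to \bQ^\ast$ (restricted to $\cF_T$), convergence of the $h$-integrals, and, crucially, $S_T(\bQ_n\,|\,\PR_{x_n}) \to S_T(\bQ^\ast\,|\,\PR_x)$ because the density $\Lambda_\delta$ is a continuous bounded functional. Letting first $\delta \to 0$ and then $T \to \infty$ delivers $\liminf_n R(\lambda)h(x_n) \geq F(x,\bQ^\ast) - O(\epsilon)$, completing the argument.
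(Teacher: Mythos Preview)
Your boundedness and upper semicontinuity arguments are correct and match the paper's approach; the paper packages the latter via an abstract upper-hemicontinuity lemma (Lemma~\ref{lemma:upper_semi_continuity_abstract}), but the content is the same.

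The lower semicontinuity argument, however, has a genuine gap. The cost in $R(\lambda)h$ is not $S_T(\bQ\,|\,\PR_x)$ but the integrated quantity $\cS_\lambda(\bQ\,|\,\PR_x)=\int_0^\infty S_t(\bQ\,|\,\PR_x)\,\tau_\lambda(\dd t)$, and your plan only establishes $S_T(\bQ_n\,|\,\PR_{x_n})\to S_T(\bQ^\ast_\delta\,|\,\PR_x)$ for the single horizon $T$. Approximating the $\cF_T$-density $\Lambda$ by a bounded continuous $\Lambda_\delta$ gives you no control over the $\cF_t$-projected densities $\bE_{\PR_{x_n}}[\Lambda_\delta\,|\,\cF_t]$ for $t<T$: these need not be continuous functionals on path space, and there is no reason why $S_t(\bQ_n\,|\,\PR_{x_n})$ should be close to $S_t(\bQ^\ast\,|\,\PR_x)$ for intermediate $t$. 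Bounding $S_t\le S_T$ is too crude, since it would replace $\int_0^T S_t\,\tau_\lambda(\dd t)$ by the typically much larger $S_T$.

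The paper singles out exactly this issue in the discussion preceding Proposition~\ref{proposition:gamma_convergence_integral_entropy}: a recovery sequence for $S_t$ at a fixed $t$ ``gives no information on times $s\neq t$''. Its resolution is to exploit the Markov structure. Because the regular conditional law of $\PR_y$ given $\cF_s$ is $\PR_{X(s)}$ and does not depend on $y$, one can build $\bQ_n$ time-slice by time-slice over a fine grid $\cT_k$ (Lemma~\ref{lemma:conditional_gamma_entropy} plus induction), obtaining $\limsup_n S_t(\bQ_n\,|\,\PR_{x_n})\le S_t(\bQ\,|\,\PR_x)$ simultaneously for \emph{all} $t$, together with a uniform $+1$ domination that licenses Fatou. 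That simultaneous control is what your single-density tilt does not deliver.
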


\begin{proposition} \label{proposition:resolvents_approximate_semigroup}
	For each $h \in C_b(E)$, $t \geq 0$ and $x \in E$ we have
	\begin{equation*}
	\lim_{m \rightarrow \infty} R\left(\frac{t}{m}\right)^m h = V(t)h.
	\end{equation*}
	for the strict topology.
\end{proposition}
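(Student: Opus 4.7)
The plan is to sandwich $R(t/m)^m h(x)$ between two quantities, each converging to $V(t)h(x)$, formalizing the intuition that a sum of $m$ i.i.d.\ exponentials with mean $t/m$ concentrates at $t$ by the law of large numbers.

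\emph{Upper bound.} Donsker-Varadhan applied pointwise in $s$ to the $\cF_s$-measurable random variable $h(X(s))$ gives $\int h(X(s)) \dd \bQ - S_s(\bQ \mid \PR_x) \le V(s)h(x)$. Averaging against $\tau_\lambda(\dd s)$ and applying Jensen to the concave logarithm,
\[
R(\lambda)h(x) \;\le\; \int_0^\infty V(s)h(x)\, \tau_\lambda(\dd s) \;\le\; \log \int_0^\infty e^{V(s)h(x)} \tau_\lambda(\dd s) \;=\; \log \bE_x\bigl[e^{h(X(T))}\bigr] \;=:\; U(\lambda)h(x),
\]
with $T \sim \tau_\lambda$ independent of $X$ under $\PR_x$. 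Both $R(\lambda)$ and $U(\lambda)$ are monotone (clear from the sup- and log-Laplace-representations), so $R(\lambda)^m h \le U(\lambda)^m h$ by induction. The strong Markov property applied at successive exponential clocks gives $U(\lambda)^m h(x) = \log \bE_x[e^{h(X(S_m))}]$, where $S_m := T_1 + \cdots + T_m$ with $T_i$ i.i.d.\ $\tau_\lambda$. For $\lambda = t/m$, $S_m \to t$ in $L^2$ by the law of large numbers, and bounded convergence together with continuity of $s \mapsto \bE_x[e^{h(X(s))}]$ at $t$ (automatic under Condition~\ref{condition:Markov_single} in the absence of fixed discontinuities) yields $U(t/m)^m h(x) \to V(t)h(x)$.

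\emph{Lower bound.} For the reverse inequality I would iteratively unfold $R(\lambda)^m h(x)$ using the strong Markov property at the random times $T_1, T_1+T_2, \ldots, S_m$, so that the nested suprema become a single supremum over a measure $\bQ$ on $D_E(\bR^+)$ with running cost $\bE_{\tau_\lambda^{\otimes m}}[S_{S_m}(\bQ \mid \PR_x)]$. Plugging in the Gibbs tilt given by $\dd \bQ / \dd \PR_x$ on $\cF_{S_m}$ equal to $\exp(h(X(S_m)) - V(S_m)h(x))$ and continuing under $\PR_x$-conditional kernels after $S_m$ makes the Donsker-Varadhan variational formula tight at the random time $S_m$, giving
\[
R(t/m)^m h(x) \;\ge\; \bE\bigl[V(S_m)h(x)\bigr] \;\longrightarrow\; V(t)h(x),
\]
again by dominated convergence.

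\emph{Strict topology and main obstacle.} The upgrade from pointwise to strict convergence uses the obvious uniform bound $\vn{R(\lambda)^m h} \le \vn{h}$ (take $\bQ = \PR_x$ for the lower contractivity bound and drop the entropy for the upper one), combined with Condition~\ref{condition:Markov_single} which makes both bracketing expressions $U(t/m)^m h(\cdot)$ and $\bE[V(S_m)h(\cdot)]$ continuous in $x$ and equicontinuous on compacts. The delicate step is the lower bound: the entropy in each iterate of $R(\lambda)$ is the \emph{time-restricted} $S_s$ averaged against $\tau_\lambda$, and assembling the nested suprema into a single variational problem on the path space augmented by the $m$ clocks requires a careful use of the chain rule for relative entropy together with the strong Markov property. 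The crux is verifying that the Gibbs tilt at the random final time $S_m$ is admissible in the assembled variational problem and that its total entropy cost telescopes correctly into $\bE[S_{S_m}(\bQ \mid \PR_x)]$.
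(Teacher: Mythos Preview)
Your upper bound is correct and in fact slightly cleaner than the paper's. The paper sandwiches $R(\lambda)^m h$ between iterates of two auxiliary operators $T^-(\tau) \leq T^+(\tau)$ (with $T^-(\tau_\lambda) = R(\lambda)$ and $T^\pm(\delta_t) = V(t)$), proves the convolution inequalities $T^+(\tau_1*\tau_2) \geq T^+(\tau_1)T^+(\tau_2)$ and $T^-(\tau_1*\tau_2) \leq T^-(\tau_1)T^-(\tau_2)$ so that $(T^\pm(\tau_\lambda))^m$ is controlled by $T^\pm(\tau_\lambda^{*m})$, and then passes to the limit using $\tau_\lambda^{*m} \to \delta_t$. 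Your $U(\lambda)$ is the logarithm of the linear resolvent applied to $e^h$; it iterates directly to $\log \bE_x[e^{h(X(S_m))}]$ by the strong Markov property, bypassing the $T^+$ convolution lemma.

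The lower bound, however, has a genuine gap. Once you have collapsed the nested suprema into a single supremum over $\bQ \in \cP$ with cost $\int S_s(\bQ \, | \, \PR_x)\, \tau_\lambda^{*m}(\dd s)$ --- this step is correct and is precisely the paper's inequality $T^-(\tau_1*\tau_2) \leq T^-(\tau_1)T^-(\tau_2)$ --- the test measure must be a \emph{single} element of $\cP$, not a family indexed by $s$. Your Gibbs tilt with density $\exp\bigl(h(X(S_m)) - V(S_m)h(x)\bigr)$ on $\cF_{S_m}$ depends on the realisation of $S_m$ and is therefore not admissible. Allowing $\bQ$ to vary with $s$ turns the single supremum into $\int_0^\infty \sup_{\bQ}[\,\cdots\,]\, \tau_\lambda^{*m}(\dd s) = \bE[V(S_m)h(x)]$, which is the $T^+$ expression and hence an \emph{upper} bound for $R(\lambda)^m h(x)$; already for $m=1$ your asserted inequality $R(\lambda)h(x) \geq \int V(s)h(x)\,\tau_\lambda(\dd s)$ is the reverse of what actually holds.

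The repair is to pin down the target time: take $\bQ$ to be the Gibbs tilt at the \emph{deterministic} time $t$, i.e.\ $\dd \bQ / \dd \PR_x = \exp\bigl(h(X(t)) - V(t)h(x)\bigr)$ on $\cF_t$, continued with the Markov kernel after $t$ so that $S_s(\bQ \, | \, \PR_x) = S_t(\bQ \, | \, \PR_x)$ for all $s \geq t$. Then $s \mapsto S_s(\bQ \, | \, \PR_x)$ is monotone and eventually constant, $s \mapsto \int h(X(s))\, \bQ(\dd X)$ is continuous at $t$ (since $\bQ \ll \PR_x$), and both integrate well against $\tau_\lambda^{*m} \to \delta_t$. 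This is exactly the paper's route; the recovery-sequence construction of Proposition~\ref{proposition:gamma_convergence_integral_entropy} then upgrades the pointwise $\liminf$ to one along $x_n \to x$, which combined with the strict convergence of the upper bracket yields convergence for the strict topology.
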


Proposition \ref{proposition:continuity_of_resolvent} will be verified in Section \ref{section:regularity}, in which we will also verify other regularity properties of $R(\lambda)$. Proposition \ref{proposition:resolvents_approximate_semigroup} is a part of our main results connecting the resolvent and semigroup and will be established in Section \ref{section:proofs_main_results}.

\subsection{Strategy of the proof of Theorem \ref{theorem:Markov_solving_HJ} and discussion on extensions} \label{section:discussion_proofs}

Theorem \ref{theorem:Markov_solving_HJ} will follow as a consequence of Proposition 3.4 of \cite{Kr19}. We therefore have to check three properties of $R(\lambda)$:
\begin{enumerate}
	\item For all $(f,g) \in H$, we have $f = R(\lambda)(f - \lambda g)$ ;
	\item The pseudo-resolvent property: for all $h \in C_b(E)$ and $0 < \alpha < \beta$ we have 
	\begin{equation*}
	R(\beta)h = R(\alpha) \left(R(\beta)h - \alpha \frac{R(\beta)h - h}{\beta} \right).
	\end{equation*}
	\item $R(\lambda)$ is contractive. 
\end{enumerate}

We verify (c) in Section \ref{section:regularity} as it relates to the regularity of the resolvent. We verify (a) and (b) in Sections \ref{subsection:Markov_invert} and \ref{subsection:Markov_pseudoresolvent} respectively. 

As is known from the theory of weak convergence, the resolvent is related to exponential integrals. 
\begin{itemize}
	\item (a) is related to integration by parts: for bounded measurable functions $z$ on $\bR^+$, we have
	\begin{equation*}
	\lambda \int_0^\infty  z(t) \, \tau_\lambda( \dd t) = \int_0^\infty \int_0^t z(s) \, \dd s \, \tau_\lambda(\dd t).
	\end{equation*}
	\item (b) is related to a more elaborate property of exponential random variables. Let $0 < \alpha < \beta$ then
	\begin{equation*}
	\int_0^\infty z(s) \tau_\beta(\dd s) = \frac{\alpha}{\beta} \int_0^\infty z(s) \tau_\alpha(\dd s) + \left(1 - \frac{\alpha}{\beta}\right) \int_0^\infty \int_0^\infty z(s+u) \, \tau_\beta(\dd u) \, \tau_\alpha(\dd s).
	\end{equation*}
	\item Finally, the approximation property of Proposition \ref{proposition:resolvents_approximate_semigroup} is essentially a law of large numbers. The sum of $n$ independent random variables of mean $t/n$ converges to $t$.
\end{itemize}
 In the non-linear setting, our resolvent is given in terms of an optimization problem over an exponential integral. Thus, our method is aimed towards treating the optimisation procedures by careful choices of measures and decomposition and concatenation or relative entropies by using Proposition \ref{proposition:relative_entropy_decomposition} and then using the properties of exponential integrals.

\smallskip

Any of the results mentioned in the above section can be carried out by introducing an extra scaling parameter into the operators.

\begin{remark} \label{remark:rescaling_of_operators}
	Fix $r > 0$. Let $H[r]$ be a collection of pairs $(f,g)$ such that
	\begin{equation*}
	t \mapsto \exp\left\{r\left(f(X(t)) - f(X(0)) - \int_0^t g(X(s)) \dd s \right) \right\}
	\end{equation*}
	are martingales. As above, we have $\left\{(f, r^{-1} e^{-rf}g) \, \middle| \, (e^{rf},g) \in A \right\} \subseteq H[r]$. Relatively straightforwardly, chasing the constant $r$, one can show that $R[r](\lambda)h$ gives viscosity solutions to $f - \lambda H[z]f = h$, where
	\begin{equation*}
	R[z](\lambda)h(x) = \sup_{\bQ \in \cP} \left\{ \int_0^\infty \lambda^{-1} e^{-\lambda^{-1}t} \left( \int h(X(t)) \bQ(\dd X) - \frac{1}{r} S_t(\bQ \, | \, \PR_x ) \right) \dd t\right\}.
	\end{equation*}
	We also have
	\begin{equation*}
	\lim_{m \rightarrow \infty} R[r]\left(\frac{t}{m}\right)^m h(x) = r^{-1} V(t)(rh)(x).
	\end{equation*}
\end{remark}

\begin{question} \label{question:upper_and_lower_bound}
	To some extent one could wonder whether Theorem \ref{theorem:Markov_solving_HJ} has an extension where $H_\dagger$ is a collection of pairs $(f,g)$ such that
	\begin{equation*}
	t \mapsto \exp\left\{f(X(t)) - f(X(0)) - \int_0^t g(X(s)) \dd s \right\}
	\end{equation*}
	are supermartingales, and where $H_\ddagger$ is a collection of pairs $(f,g)$ such that
	\begin{equation*}
	t \mapsto \exp\left\{f(X(t)) - f(X(0)) - \int_0^t g(X(s)) \dd s \right\}
	\end{equation*}
	are submartingales.
	
	The statement would become that for each $h \in C_b(E)$ and $\lambda > 0$ the function $R(\lambda)h$ is a viscosity subsolution to $f - \lambda H_\dagger f = h$ and a viscosity supersolution to $f - \lambda H_\ddagger f = h$. 
	
	Indeed, some of the arguments in Section \ref{section:proofs_main_results} can be carried out for sub- and super-martingales respectively. Certain arguments, however,use that we work with martingales. For example, Lemma \ref{lemma:DV_variational} holds for probability measures only.
\end{question}

\section{Large deviations for Markov processes} \label{section:Markov_LDP_simple}

In this section, we consider the large deviations on $D_E(\bR^+)$ of a sequence of Markov processes $X_n$. In Section \ref{section:Markov_LDP} below, we will instead consider the more general framework where the $X_n$ take their values in a sequence of spaces $E_n$ that are embedded in $E$ by a map $\eta_n$ and where the images $\eta_n(E_n)$ converge in some appropriate way to $E$. As this introduces a whole range of technical complications, we restrict ourselves in this section to the most simple case.

\begin{condition} \label{condition:simple_markov_ldp}
	Let $A_n \subseteq C_b(E) \times C_b(E)$ be linear operators and let $r_n$ be positive real numbers such that $r_n \rightarrow \infty$. Suppose that
	\begin{itemize}
		\item The martingale problems for $A_n$ are well-posed. Denote by $x \mapsto \PR_x^n$ the solution to the martingale problem for $A_n$.
		\item For each $n$ that $x \mapsto \PR_x^n$ is continuous for the weak topology on $\cP(D_E(\bR^+))$.
		\item for all compact sets $K \subseteq E$ and $a \geq 0$ there is a compact set $K_a \subseteq D_E(\bR^+)$ such that
		\begin{equation*}
		\limsup_n \sup_{x \in K} \frac{1}{r_n} \log \PR_x^n[K_a^c] \leq -a.
		\end{equation*}
	\end{itemize}
\end{condition}

The first two conditions correspond to Condition \ref{condition:Markov_single}. The final one states that we have exponential tightness of the processes $X_n$ uniformly in the starting position in a compact set.

\smallskip

Corresponding to the previous section, define the operators $H_n$ consisting of pairs $(f,g) \in C_b(E) \times C_b(E)$ such that
\begin{equation*}
t \mapsto \exp\left\{r_n \left( f(X_n(t)) - f(X_n(0)) - \int_0^t g(X_n(s)) \dd s \right)  \right\}
\end{equation*}
are martingales. Also define the rescaled log moment-generating functions
\begin{equation*}
V_n(t)f(y) := \frac{1}{r_n} \log \bE\left[e^{r_n f(X_n(t))} \, \middle| \, X_n(0) = x\right].
\end{equation*}

\begin{theorem} \label{theorem:LDP_simple}
	Let Condition \ref{condition:simple_markov_ldp} be satisfied. Let $r_n > 0$ be some sequence such that $r_n \rightarrow \infty$. Suppose that
	\begin{enumerate}
		\item The large deviation principle holds for $X_n(0)$ on $E$ with speed $r_n$ and good rate function $I_0$.
		\item There is an operator $H \subseteq ex-\LIM H_n$.
		\item The comparison principle holds for $f - \lambda H f = h$ for all $h \in C_b(E)$ and $\lambda > 0$.
	\end{enumerate}
	Then there exists a semigroup $V(t)$ on $C_b(E)$ such that if $\beta-\lim f_n = f$ and $t_n \rightarrow t$ if holds that $\beta-\lim V(t_n)f_n = V(t)f$.
	
	In addition, the processes $X_n$ satisfy a large deviation principle on $D_E(\bR^+)$ with speed $r_n$ and rate function
	\begin{equation} \label{eqn:LDP_rate_simple}
	I(\gamma) = I_0(\gamma(0)) + \sup_{k \geq 1} \sup_{\substack{0 = t_0 < t_1 < \dots, t_k \\ t_i \in \Delta_\gamma^c}} \sum_{i=1}^{k} I_{t_i - t_{i-1}}(\gamma(t_i) \, | \, \gamma(t_{i-1})).
	\end{equation}
	Here $\Delta_\gamma^c$ is the set of continuity points of $\gamma$. The conditional rate functions $I_t$ are given by
	\begin{equation*}
	I_t(y \, | \, x) = \sup_{f \in C_b(E)} \left\{f(y) - V(t)f(x) \right\}.
	\end{equation*}
\end{theorem}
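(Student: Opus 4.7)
The plan is to follow the Feng--Kurtz strategy as outlined in the introduction, but to implement the semigroup-convergence step via the explicit resolvents constructed in Theorem \ref{theorem:Markov_solving_HJ}.

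First I would apply Theorem \ref{theorem:Markov_solving_HJ} (in the rescaled form of Remark \ref{remark:rescaling_of_operators}) to each process $X_n$ separately, producing resolvents $R_n(\lambda)$ defined by the variational formula that give viscosity solutions to $f - \lambda H_n f = h$. Together with Propositions \ref{proposition:continuity_of_resolvent} and \ref{proposition:resolvents_approximate_semigroup}, this yields, for each $n$, a contractive pseudo-resolvent on $C_b(E)$ whose iterates approximate $V_n(t)$ in the strict topology. The hypothesis $H \subseteq ex\text{-}\LIM H_n$ gives for each $(f,g) \in H$ approximating pairs $(f_n,g_n) \in H_n$ with $\beta\text{-}\lim f_n = f$ and $\beta\text{-}\lim g_n = g$, hence $\beta\text{-}\lim(f_n - \lambda g_n) = f - \lambda g$ and $R_n(\lambda)(f_n - \lambda g_n) = f_n$.

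Next I would invoke the abstract Trotter--Kato--Kurtz scheme of \cite{Kr19}: since each $R_n(\lambda)$ is a contractive pseudo-resolvent inverting $\bONE - \lambda H_n$, and since the comparison principle ensures uniqueness of viscosity solutions to $f - \lambda H f = h$, one extracts a limiting contractive pseudo-resolvent $R(\lambda)$ on $C_b(E)$ (identified with the unique viscosity solution operator for $f - \lambda H f = h$), and defines $V(t)h := \lim_m R(t/m)^m h$. The approximation property from Proposition \ref{proposition:resolvents_approximate_semigroup} applied to both sides, together with the extended-limit hypothesis, yields that $\beta\text{-}\lim V_n(t_n) f_n = V(t) f$ whenever $\beta\text{-}\lim f_n = f$ and $t_n \to t$. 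This is the semigroup-convergence conclusion of the theorem.

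With the semigroup convergence in hand, the large deviation principle for finite-dimensional distributions follows by the standard Bryc/Varadhan-type argument: for $x_n \to x$ and $f \in C_b(E)$,
\begin{equation*}
\frac{1}{r_n} \log \bE\bigl[e^{r_n f(X_n(t))} \, \big| \, X_n(0) = x_n\bigr] = V_n(t) f(x_n) \longrightarrow V(t) f(x),
\end{equation*}
so by Varadhan's lemma (or its converse), conditional on $X_n(0) = x_n$ the random variable $X_n(t)$ satisfies an LDP with rate function $I_t(\cdot \, | \, x) = \sup_f \{f(\cdot) - V(t)f(x)\}$. Combining with the LDP for $X_n(0)$ and iterating via the Markov property gives an LDP for all finite-dimensional distributions with rate function equal to the supremum in \eqref{eqn:LDP_rate_simple} over fixed partitions. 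The uniform exponential tightness in Condition \ref{condition:simple_markov_ldp} then upgrades this to a full path-space LDP on $D_E(\bR^+)$ via the Dawson--G\"artner-type projective-limit argument used in \cite{FK06}, restricting the partitions to lie in $\Delta_\gamma^c$ in order to accommodate jumps of $\gamma$ and obtain the stated rate function.

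The main obstacle is the second paragraph: checking carefully that the abstract machinery of \cite{Kr19} applies in the strict-topology setting, in particular that the approximation $V(t)h = \lim_m R(t/m)^m h$ commutes with the $n \to \infty$ limit uniformly enough to yield joint convergence $V_n(t_n) f_n \to V(t) f$. This requires combining the pseudo-resolvent identity, the contractivity, and the extended-limit hypothesis with care, because the maps $R_n(\lambda)$ and $V_n(t)$ are non-linear and the control is only in the buc (equivalently strict) sense. Once this joint convergence is secured, the projective-limit/exponential-tightness passage to path space is technical but standard, and the explicit form \eqref{eqn:LDP_rate_simple} then follows from the Markov structure.
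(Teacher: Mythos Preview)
Your overall strategy matches the paper's: deduce the simple theorem as the special case $E_n = E$, $\eta_n = \mathrm{id}$ of the general Theorem \ref{theorem:LDP}, whose proof reduces via Theorem \ref{theorem:LDP_basic} (the projective-limit/Bryc step you describe) to semigroup convergence, and obtains the latter from the abstract result \cite[Theorem 6.1]{Kr19} using the explicit resolvents of Theorem \ref{theorem:Markov_solving_HJ} and Proposition \ref{proposition:resolvents_approximate_semigroup}.

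Where your proposal is incomplete is in the ``main obstacle'' paragraph. To apply \cite[Theorem 6.1]{Kr19} one needs more than contractivity and the pseudo-resolvent identity: the paper explicitly verifies three additional ingredients that you do not name. First, \emph{joint local strict equi-continuity} of the semigroups $\{V_n(t)\}_{n,t}$ (Lemma \ref{lemma:equi_cont_of_semigroup}): for every compact $K$, $\delta>0$, $T>0$ there is a compact $\widehat K$ such that $\sup_{x\in K}\{V_n(t)h_1(x)-V_n(t)h_2(x)\}\le \delta\vn{h_1-h_2}+\sup_{y\in\widehat K}\{h_1(y)-h_2(y)\}$ uniformly in $n$ and $t\le T$. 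Second, the analogous equi-continuity for the resolvents $\{R_n(\lambda)\}_{n,\lambda\le\lambda_0}$ (Lemma \ref{lemma:equi_cont_of_resolvent}). Third, $\beta\text{-}\lim_{\lambda\downarrow 0}R(\lambda)h=h$ for the \emph{limiting} resolvent (Lemma \ref{lemma:continuity_limiting_resolvent_at_0}), needed so that the domain of the limiting semigroup is all of $C_b(E)$. None of these follow from contractivity alone; their proofs hinge on the exponential compact containment assumed in Condition \ref{condition:simple_markov_ldp} together with the entropy/tightness transfer of Proposition \ref{proposition:exponential_tightness_and_entropy_control_implies_tightness}, which controls the optimizing measures $\bQ_{n}$ uniformly in $n$. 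Your sketch does not indicate how the uniform-in-$n$ control enters, and without it the interchange of the $m\to\infty$ and $n\to\infty$ limits cannot be justified.
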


\begin{remark}
	A representation for $I$ in a Lagrangian form can be obtained by the analysis in Chapter 8 of \cite{FK06}. To some extent the analysis is similar to the one of this paper. First, one identifies the resolvent as a deterministic control problem by showing that it solves the Hamilton-Jacobi equation in the viscosity sense. Second, one shows that it approximates a control-semigroup. Third, one uses the control-semigroup to show that \eqref{eqn:LDP_rate_simple} is also given in terms of the control problem. 
\end{remark}

\section{Regularity of the semigroup and resolvent} \label{section:regularity}

The main object of study of this paper is the resolvent introduced in Definition \ref{definition:key_definitions_H_resolvent_semigroup}. Before we start with the main results, we first establish that the resolvent itself is `regular':
\begin{itemize}
	\item We establish that $R(\lambda) h \in C_b(E)$ for $\lambda > 0$ and $h \in C_b(E)$.
	\item We establish that $h \mapsto R(\lambda)h$ is sequentially continuous for the strict topology.
	\item We establish that $\lim_{\lambda \downarrow 0} R(\lambda)h = h$ for the strict topology.
\end{itemize}
Before starting with analysing the resolvent, we establish regularity properties for the cost function that appears in the definition of $R(\lambda)$.

\subsection{Properties of relative entropy} \label{subsection:regularity_of_entropy}

A key property of Legendre transformation is that convergence of convex functionals implies (and is often equivalent) to Gamma convergence of their convex duals. This can be derived from a paper of Zabell \cite{Za92a}. In the context of weak convergence of measures this has recently been established with a direct proof by Mariani in Proposition 3.2 of \cite{Ma18}.

We state the result for completeness.

\begin{proposition} \label{proposition:Gamma_convergence_relative_entropy}
	Let $X$ be some Polish space.
		Then (a) and (b) are equivalent:
	\begin{enumerate}
		\item $\mu_n \rightarrow \mu$ weakly,
		\item The functionals $S(\cdot \, | \, \mu_n)$ Gamma converge to $S(\cdot \, | \, \mu)$: that is:
		\begin{enumerate}[(1)]
			\item The Gamma lower bound: for any sequence $\nu_n \rightarrow \nu$ we have $\liminf_n S(\nu_n \, | \, \mu_n) \geq S(\nu \, | \, \mu)$.
			\item The Gamma upper bound: for any $\nu$ there are $\nu_n$ such that $\nu_n \rightarrow \nu$ such that $\limsup_n S(\nu_n \, | \, \mu_n) \leq S(\nu \, | \, \mu)$.
		\end{enumerate} 
	\end{enumerate}
\end{proposition}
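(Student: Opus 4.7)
\medskip

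The plan is to exploit the Donsker--Varadhan variational representation of relative entropy, namely
\begin{equation*}
S(\nu \, | \, \mu) = \sup_{f \in C_b(X)} \left\{ \int f \, \dd\nu - \log \int e^f \, \dd\mu \right\},
\end{equation*}
together with an explicit recovery-sequence construction, and to obtain the converse direction from Pinsker's inequality. These three ingredients give both implications.

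\medskip

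For the implication $(a) \Rightarrow (b)$, the Gamma liminf bound is the easy half. Given $\nu_n \to \nu$ weakly with $\mu_n \to \mu$ weakly, I would fix $f \in C_b(X)$ and use the variational formula to write
\begin{equation*}
S(\nu_n \, | \, \mu_n) \geq \int f \, \dd\nu_n - \log \int e^f \, \dd\mu_n.
\end{equation*}
Taking $\liminf_n$ and using weak convergence of $\nu_n$ and $\mu_n$ (noting $e^f \in C_b(X)$) yields $\liminf_n S(\nu_n \, | \, \mu_n) \geq \int f \, \dd\nu - \log \int e^f \, \dd\mu$, after which a supremum over $f$ closes the bound.

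\medskip

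For the Gamma limsup (recovery sequence), the case $S(\nu \, | \, \mu) = \infty$ is trivial by taking $\nu_n = \nu$. Otherwise write $\nu = f \mu$ with $f = \dd\nu/\dd\mu$ satisfying $\int f \log f \, \dd\mu < \infty$. The plan is first to approximate $f$ by a sequence of strictly positive functions $g_k \in C_b(X)$ with $\int g_k \, \dd\mu = 1$ such that $g_k \mu \to \nu$ weakly and $\int g_k \log g_k \, \dd\mu \to S(\nu \, | \, \mu)$; this uses a standard truncation plus mollification procedure on $f$ inside $L^1(\mu)$ combined with dominated/monotone convergence for the entropy functional. Having the approximants $g_k$, set
\begin{equation*}
\nu_n^{(k)} := \frac{g_k \, \mu_n}{\int g_k \, \dd\mu_n},
\end{equation*}
which are well-defined for $n$ large since $\int g_k \, \dd\mu_n \to \int g_k \, \dd\mu = 1$ by weak convergence. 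Boundedness of $g_k$ and $g_k \log g_k$ (as continuous bounded functions, at least after truncation away from $0$) then gives $\nu_n^{(k)} \to g_k \mu$ weakly and $S(\nu_n^{(k)} \, | \, \mu_n) \to \int g_k \log g_k \, \dd\mu$ as $n \to \infty$. A standard diagonal extraction along $k = k(n) \to \infty$ produces the desired recovery sequence.

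\medskip

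For the converse $(b) \Rightarrow (a)$, I would apply the recovery-sequence property at the specific choice $\nu = \mu$. Then $S(\mu \, | \, \mu) = 0$, so there exist $\nu_n \to \mu$ weakly with $\limsup_n S(\nu_n \, | \, \mu_n) \leq 0$; since relative entropy is non-negative this forces $S(\nu_n \, | \, \mu_n) \to 0$. Pinsker's inequality gives $\| \nu_n - \mu_n \|_{TV} \leq \sqrt{2 S(\nu_n \, | \, \mu_n)} \to 0$, and combining with $\nu_n \to \mu$ weakly yields $\mu_n \to \mu$ weakly by the triangle inequality for test integrals against $f \in C_b(X)$.

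\medskip

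The main obstacle is the construction of the recovery sequence: one must simultaneously approximate $\dd\nu/\dd\mu$ by continuous bounded densities (so the corresponding measures behave well under weak convergence of $\mu_n$) and control the entropy functional, which is not continuous in the weak topology. Handling this requires care with the truncation and normalization of $f$, particularly the fact that $x \log x$ is convex but unbounded, so I would use monotone and dominated convergence on the integrand to pass to the limit in the entropy before invoking the diagonal argument.
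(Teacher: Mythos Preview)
The paper does not give its own proof of this proposition; it states the result for completeness and refers to Zabell (via Legendre duality of convex functionals) and to a direct proof by Mariani. Your argument is a correct direct proof, likely close in spirit to the latter reference: the Donsker--Varadhan formula for the $\Gamma$-liminf, explicit tilting by bounded continuous densities for the recovery sequence, and Pinsker's inequality for the converse.

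One terminological caveat: you invoke ``mollification'' in the density approximation step, but on a general Polish space there is no convolution structure available. What you actually need is the density of $C_b(X)$ in $L^1(\mu)$ (valid since $\mu$ is a Borel probability measure on a Polish space), combined with a two-sided truncation so that the approximants take values in a fixed interval $[1/M,M]$ on which $x \mapsto x \log x$ is Lipschitz. With that in place, approximating the truncated density in $L^1(\mu)$ by a continuous function, then clipping and renormalizing, yields the required $g_k$; the remaining steps (continuity of $\int g_k \,\dd\mu_n$ and $\int g_k \log g_k \,\dd\mu_n$ under weak convergence, and the diagonal extraction) go through exactly as you describe.
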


Our resolvent is given in terms of the cost functional
\begin{equation} \label{eqn:definition_exponential_entropy}
\cS_\lambda(\bQ \, | \, \PR_x) := \int_0^\infty S_t(\bQ \, | \, \PR_x) \tau_\lambda(\dd t).
\end{equation}
Below, we establish Gamma convergence for $\cS_\lambda$. 
\begin{itemize}
	\item The Gamma $\liminf_n$ inequality, in addition to the compactness of the level sets (coercivity) of $\cS_\lambda$ is, established in Lemma \ref{lemma:compact_level_sets_integrated_relative_entropy}.
	\item In Proposition \ref{proposition:uniform_compact_levelsets} we strengthen the coercivity to allow for compactness of the level sets of $\cS_\lambda$ uniformly for small $\lambda$ (equi-coercivity). This property will allow us to study $R(\lambda)$ uniformly for small $\lambda$.
	\item The Gamma $\limsup_n$ inequality is established in Proposition \ref{proposition:gamma_convergence_integral_entropy}.
\end{itemize}

\subsubsection{The $\Gamma-\liminf$ inequality and coercivity}

\begin{lemma} \label{lemma:compact_level_sets_integrated_relative_entropy}
	For any $\lambda >0$ the map
	\begin{equation*}
	(\PR,\bQ) \mapsto \cS_\lambda(\bQ \, | \, \PR) = \int_0^\infty S_t(\bQ \, | \, \PR) \tau_\lambda(\dd t)
	\end{equation*}
	is lower semi-continuous. In addition, the map has compact sublevel sets in the following sense: fix a compact set $K \subseteq \cP(D_E(\bR^+))$ and $c \geq 0$.  Then the set
	\begin{equation*}
	A(c) := \left\{\bQ \in \cP(D_E(\bR^+)) \, \middle| \, \exists \, \PR \in K: \, \cS_\lambda(\bQ \, | \, \PR) \leq c \right\}
	\end{equation*}
	is compact.
\end{lemma}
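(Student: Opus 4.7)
The plan is to reduce both assertions to two ingredients: (i) joint lower semi-continuity of $(\PR,\bQ) \mapsto S_t(\bQ \, | \, \PR)$ for $\tau_\lambda$-a.e.\ $t$, and (ii) the a priori bound $S_t(\bQ \, | \, \PR) \leq c\, e^{t/\lambda}$ for $\bQ \in A(c)$ with $\PR \in K$ witnessing membership. Ingredient (ii) is immediate from monotonicity of $t \mapsto S_t(\bQ \, | \, \PR)$ in $t$:
\begin{equation*}
c \geq \cS_\lambda(\bQ \, | \, \PR) \geq S_t(\bQ \, | \, \PR) \int_t^\infty \tau_\lambda(\dd s) = e^{-t/\lambda}\, S_t(\bQ \, | \, \PR).
\end{equation*}

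For (i), I would argue by finite-dimensional approximation. For a finite tuple $\vec t = (t_1, \dots, t_k)$ with $0 \leq t_1 < \dots < t_k \leq t$, let $\pi_{\vec t} \colon D_E(\bR^+) \to E^k$ denote the evaluation. A standard monotone-class/Donsker--Varadhan argument gives
\begin{equation*}
S_t(\bQ \, | \, \PR) = \sup_{\vec t \subseteq [0,t]} S(\bQ \circ \pi_{\vec t}^{-1} \, | \, \PR \circ \pi_{\vec t}^{-1}),
\end{equation*}
and since adding times to $\vec t$ only increases the right-hand side, the supremum may be restricted to tuples whose entries are continuity times of both $\bQ$ and $\PR$ (a co-countable, hence dense, set). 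At such tuples $\pi_{\vec t}$ is $\bQ$-a.s.\ and $\PR$-a.s.\ continuous, so weak convergence $(\bQ_n, \PR_n) \to (\bQ, \PR)$ on $D_E(\bR^+)$ transfers to the pushforwards on $E^k$, and the $\Gamma$-$\liminf$ bound of Proposition \ref{proposition:Gamma_convergence_relative_entropy} on $E^k$ yields the $\liminf$ inequality for the finite-dimensional entropy. Supping over an exhausting sequence of such tuples then gives the $\liminf$ inequality for $S_t$ itself at every $t$ outside the countable exceptional set. Since this set is $\tau_\lambda$-null, Fatou's lemma delivers joint lower semi-continuity of $\cS_\lambda$.

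With (i) in hand, closedness of $A(c)$ is immediate: for $\bQ_n \in A(c)$ witnessed by $\PR_n \in K$ with $\bQ_n \to \bQ$, extract $\PR_{n_k} \to \PR \in K$ by compactness of $K$ and take $\liminf$. For tightness, fix $T > 0$: by (ii), $S_T(\bQ \, | \, \PR) \leq c\, e^{T/\lambda}$ for every $\bQ \in A(c)$ and some witnessing $\PR \in K$. Since $K$ is tight, so is the family of restrictions $\{\PR|_{\cF_T} : \PR \in K\}$ on $D_E([0,T])$, and the classical fact --- provable via Donsker--Varadhan applied to indicators of compact complements --- that a family dominated in relative entropy by a tight family is itself tight yields tightness of the $\cF_T$-restrictions of $A(c)$. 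Holding for every $T$ gives tightness of $A(c)$ on $D_E(\bR^+)$, and combined with closedness, compactness.

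The main technical obstacle is the joint lower semi-continuity of $S_t$, since the evaluation $X \mapsto X(t)$ is not continuous on Skorokhod space at paths with a jump at $t$, so Proposition \ref{proposition:Gamma_convergence_relative_entropy} cannot be applied directly to $S_t$ on $D_E(\bR^+)$ for a fixed $t$. The device of restricting to continuity times of the limiting measures --- a countable, hence $\tau_\lambda$-null, exceptional set --- reduces matters to entropy on finite-product spaces $E^k$, where Gamma convergence applies verbatim.
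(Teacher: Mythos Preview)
Your proof is correct and follows essentially the same route as the paper: both derive the bound $S_t(\bQ\,|\,\PR)\leq c\,e^{t/\lambda}$ from monotonicity of $t\mapsto S_t$, use equi-coercivity of relative entropy over the compact set $K$ (the paper's Proposition~\ref{proposition:equi_coercivity_relative_entropy}) to get tightness on each finite time interval, and invoke Fatou for the lower semi-continuity of $\cS_\lambda$.

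The one point where you go beyond the paper is well taken. The paper simply asserts lower semi-continuity of $(\PR,\bQ)\mapsto S_t(\bQ\,|\,\PR)$ and applies Fatou, but this assertion is not literally true for \emph{every} fixed $t$: if $\bQ_n\to\bQ$ in $\cP(D_E(\bR^+))$ and the limit $\bQ$ charges paths with a jump at $t$, the evaluation map is discontinuous and the $\liminf$ inequality for $S_t$ can fail (take $\PR$ a Dirac at a constant path and $\bQ_n$ Diracs at paths whose single jump slides from $t+1/n$ down to $t$). Your device of restricting to finite-dimensional projections at joint continuity times of the limiting pair $(\bQ,\PR)$ is exactly what is needed: it recovers $S_t$ via a monotone supremum, preserves weak convergence under $\pi_{\vec t}$, and the exceptional set of times is countable hence $\tau_\lambda$-null, so Fatou still yields lower semi-continuity of $\cS_\lambda$. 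In short, your argument patches a small imprecision in the paper's one-line proof while keeping the same overall structure.
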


\begin{proof}
	The first claim follows by lower semi-continuity of $(\PR,\bQ) \mapsto S_t(\bQ \, | \, \PR)$ and Fatou's lemma. For the second claim note that a set $A \subseteq \cP(D_E(\bR^+))$ is compact if the set of measures in $A$ restricted to $\cP(D_E([0,t)))$ is compact for all $t$, see Theorem 3.7.2 in \cite{EK86}.

	Thus, fix $t$ and suppose $\bQ \in A(c)$. Then there is some $\PR \in K$ such that
	\begin{align*}
	S_t(\bQ \, | \, \bP) & \leq \int_0^\infty S_{t+s}(\bQ \, | \, \PR) \tau_\lambda(\dd s) \\
	& =  e^{\lambda^{-1} t} \int_t^\infty  S_u(\bQ \, | \, \PR) \tau_\lambda(\dd u)  \\
	& \leq e^{\lambda^{-1} t} c .
	\end{align*}
	The result now follows by Proposition \ref{proposition:equi_coercivity_relative_entropy}.
\end{proof}

The final estimate in the above proof is not uniform for small $\lambda$. this is due to the fact that the exponential random variables $\tau_\lambda$ concentrate near $0$. Thus, we can only control the relative entropies for small intervals of time after which the measure $\bQ$ is essentially free to do what it wants. Equi-coercivity of the level sets can be recovered to some extent by restricting the interval on which one is allowed to tilt the measure.

\begin{proposition} \label{proposition:uniform_compact_levelsets}
	Fix a compact set $K \subseteq \cP$, $\lambda_0 > 0$ and constants $c \geq 0$ and $\varepsilon \in (0,1)$. Let $T(\lambda) := -\lambda \log \varepsilon$. Then the set
	\begin{equation*}
	\bigcup_{0 < \lambda \leq \lambda_0} \bigcup_{\PR \in K} \left\{ \bQ \in \cP \, \middle| \, \cS_\lambda(\bQ \, | \, \PR) \leq c, \quad S_{T(\lambda)}(\bQ \, | \, \PR) = S(\bQ \, | \, \PR) \right\}
	\end{equation*}
	is relatively compact in $\cP$.	
\end{proposition}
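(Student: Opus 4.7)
The strategy is to use the additional constraint $S_{T(\lambda)}(\bQ \mid \PR) = S(\bQ \mid \PR)$ to convert the integrated entropy bound $\cS_\lambda(\bQ \mid \PR) \leq c$ into a bound on the \emph{total} relative entropy $S(\bQ \mid \PR)$ that is uniform in $\lambda \in (0, \lambda_0]$ and $\PR \in K$. Once such a uniform bound is in hand, Proposition \ref{proposition:equi_coercivity_relative_entropy} directly yields relative compactness of the set over the compact family of reference measures $K$. The reason this succeeds where the strategy of Lemma \ref{lemma:compact_level_sets_integrated_relative_entropy} failed uniformly in $\lambda$ is that the hypothesis confines the tilting of $\bQ$ relative to $\PR$ to the time window $[0, T(\lambda)]$, while the exponential distribution $\tau_\lambda$ still assigns non-negligible mass $\varepsilon$ to $[T(\lambda), \infty)$.

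Concretely, first I would observe that $t \mapsto S_t(\bQ \mid \PR)$ is non-decreasing, since $\cF_s \subseteq \cF_t$ for $s \leq t$ and relative entropy is monotone under coarsening of the $\sigma$-algebra. Combined with the trivial upper bound $S_t(\bQ \mid \PR) \leq S(\bQ \mid \PR)$ and the hypothesis $S_{T(\lambda)}(\bQ \mid \PR) = S(\bQ \mid \PR)$, this forces
\begin{equation*}
S_t(\bQ \mid \PR) = S(\bQ \mid \PR) \qquad \text{for all } t \geq T(\lambda).
\end{equation*}
Discarding the contribution to $\cS_\lambda$ on $[0, T(\lambda))$ and using $\tau_\lambda([T(\lambda), \infty)) = e^{-T(\lambda)/\lambda} = \varepsilon$ by the choice $T(\lambda) = -\lambda \log \varepsilon$, I would then obtain
\begin{equation*}
c \;\geq\; \cS_\lambda(\bQ \mid \PR) \;\geq\; \int_{T(\lambda)}^\infty S_t(\bQ \mid \PR)\, \tau_\lambda(\dd t) \;=\; \varepsilon\, S(\bQ \mid \PR).
\end{equation*}
Hence $S(\bQ \mid \PR) \leq c/\varepsilon$, uniformly over $\lambda \in (0, \lambda_0]$ and $\PR \in K$, and a direct appeal to Proposition \ref{proposition:equi_coercivity_relative_entropy} concludes the argument.

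I do not expect any serious obstacle: the entire content of the proposition is the observation that the two constraints interact in just the right way, so that the tail mass $\varepsilon$ of $\tau_\lambda$ absorbs the \emph{total} relative entropy instead of only the truncated one. The role of the cutoff $T(\lambda) = -\lambda \log \varepsilon$ is precisely to balance the two effects, and once this balance is identified the remainder of the proof is a short calculation.
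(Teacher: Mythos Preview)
Your proposal is correct and follows essentially the same approach as the paper: both arguments use the constraint $S_{T(\lambda)}(\bQ\mid\PR)=S(\bQ\mid\PR)$ together with $\tau_\lambda([T(\lambda),\infty))=\varepsilon$ to obtain the uniform bound $S(\bQ\mid\PR)\leq c/\varepsilon$, and then conclude via Proposition~\ref{proposition:equi_coercivity_relative_entropy}. Your presentation is in fact slightly more direct than the paper's, which routes the same estimate through $S_{T(\lambda)}$ and the finite-time restriction criterion for tightness on $D_E(\bR^+)$ before invoking Proposition~\ref{proposition:equi_coercivity_relative_entropy}.
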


\begin{proof}
	First recall that a set of measures in $\cP$ is compact if the set of their restrictions to a finite time interval is relatively compact. 
	
	Pick $\PR \in K$ and $0 < \lambda \leq \lambda_0$ and let $\bQ^* \in \cP$ be such that $\cS_\lambda(\bQ^* \, | \, \PR) \leq c$. We obtain
	\begin{align*}
	S_{T(\lambda)}(\bQ^* \, | \PR) & = \int_0^\infty S_{T(\lambda)+s}(\bQ^* \, | \, \PR) \tau_\lambda(\dd s) \\
	& = e^{\lambda^{-1} T(\lambda)} \int_{T(\lambda)}^\infty S_{u}(\bQ^* \, | \, \PR) \tau_\lambda(\dd u) \\
	& \leq e^{\lambda^{-1} T(\lambda)} c = \frac{c}{\varepsilon}
	\end{align*}
	which is uniformly bounded in $\lambda$. Note that as $S_{T(\lambda)}(\bQ^* \, | \PR) = S(\bQ^* \, | \, \PR)$, we have for all $t \geq 0$ that $S_{T(\lambda)}(\bQ^* \, | \PR) = S_{T(\lambda)+t}(\bQ^* \, | \, \PR)$. The map $\lambda \mapsto T(\lambda)$ is increasing, so if $t \geq T(\lambda_0)$ then $t \geq T(\lambda)$ and $S_t(\bQ^* \, | \, \PR) = S_{T(\lambda)}(\bQ^* \, | \, \PR)$. This implies that the measure $\bQ^*$ is contained in the set
	\begin{equation*}
	\bigcup_{\PR \in K} \left\{ \bQ \in \cP \, \middle| \, \forall t \geq T(\lambda_0): \,  \, S_t(\bQ \, | \, \PR) = S_{T(\lambda)}(\bQ^* \, | \PR) \leq \frac{c}{\varepsilon} \right\}.
	\end{equation*}
	By the remark at the start of the proof, this set is compact by Proposition \ref{proposition:equi_coercivity_relative_entropy}. 
\end{proof}

\subsubsection{The $\Gamma-\limsup$ inequality: construction of a recovery sequence}

For the proof of the $\Gamma-\liminf$ inequality, we could use Proposition \ref{proposition:Gamma_convergence_relative_entropy} and Fatou. In the context of the $\Gamma-\limsup$ inequality, we run into the following issue.

Given a sequence $x_n \rightarrow x$ and fixed time $t$, the result of Proposition \ref{proposition:Gamma_convergence_relative_entropy} will allow to construct a sequence $\bQ_n$ converging to $\bQ$ such that $\limsup_n S_t(\bQ_n \, | \, \PR_{x_n}) \leq S_t(\bQ \, | \, \PR_x)$. This statement can, however, not immediately be lifted to the functional $\cS_\lambda$ as the construction gives no information on times $s \neq t$.

But, using the Markovian structure of the family $\{\PR_y\}_{y \in E}$ and continuity of these measures in $y$ will allow us to construct measures $\bQ_n$ converging to $\bQ$ such that also $\limsup_n \cS_\lambda(\bQ_n \, | \, \PR_{x_n}) \leq \cS_\lambda (\bQ \, | \, \PR_x)$. This construction will be carried out via a projective limit argument.

	\begin{proposition} \label{proposition:gamma_convergence_integral_entropy}
		Let $\bQ$ be such that $\cS_\lambda(\bQ \, | \, \PR_x) = \int_0^\infty S_t(\bQ \, | \, \PR_x) \tau_\lambda(\dd t) < \infty$.
		
		Then, there are measures $\bQ_n \in \cP(D_E(\bR^+))$ that converge to $\bQ$. In addition
		\begin{align*}
		S_t(\bQ_n \, | \, \PR_{x_n}) & \leq S_t(\bQ \, | \, \PR_{x}) + 1, & \forall \, n, \forall \, t \\
		\limsup_n S_t(\bQ_n \, | \, \PR_{x_n}) & \leq S_t(\bQ \, | \, \PR_x), & \forall \, t.
		\end{align*}
		We infer from Fatou's lemma that also
		\begin{equation*}
		\limsup_{n \rightarrow \infty} \cS_\lambda(\bQ_n \, | \, \PR_{x_n}) = \limsup_{n \rightarrow \infty}\int_0^\infty S_t(\bQ_n \, | \, \PR_{x_n}) \tau_\lambda(\dd t) \leq \int_0^\infty S_t(\bQ \, | \, \PR_x) \tau_\lambda(\dd t) = \cS_\lambda(\bQ \, | \, \PR_x).
		\end{equation*}
		
	\end{proposition}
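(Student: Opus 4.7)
The plan is to build the recovery sequence $\bQ_n$ in two stages that exploit the Markovian structure of $\{\PR_y\}_{y \in E}$ to control the relative entropy $S_t$ at every time $t$ simultaneously. First, I would truncate $\bQ$ and paste in the Markov continuation: for each $T > 0$, define $\bQ^T \in \cP(D_E(\bR^+))$ by insisting that $\bQ^T|_{\cF_T} = \bQ|_{\cF_T}$ and that, conditionally on $X|_{[0,T]}$, the continuation after $T$ follows $\PR_{X(T)}$. Since $\PR_x$ has the same Markov continuation past $T$, the chain rule for relative entropy gives
\begin{equation*}
S_t(\bQ^T \, | \, \PR_x) = S_{\min(t,T)}(\bQ \, | \, \PR_x) \leq S_t(\bQ \, | \, \PR_x)
\end{equation*}
for every $t$, and $\bQ^T \to \bQ$ in $\cP(D_E(\bR^+))$ as $T \to \infty$ since their restrictions to any fixed bounded time interval coincide for $T$ large.

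Second, for each fixed $T$ I would construct a recovery for $\bQ^T$ against $\PR_{x_n}$. Applying Proposition \ref{proposition:Gamma_convergence_relative_entropy} to the weakly convergent family $\PR_{x_n}|_{\cF_T} \to \PR_x|_{\cF_T}$ on $D_E([0,T])$ produces a sequence $\bQ^{(T)}_n \to \bQ|_{\cF_T}$ with $\limsup_n S_T(\bQ^{(T)}_n \, | \, \PR_{x_n}) \leq S_T(\bQ \, | \, \PR_x)$; I then extend each $\bQ^{(T)}_n$ to $\widetilde{\bQ}^{(T)}_n$ on $D_E(\bR^+)$ by the Markov kernels of $\PR_{x_n}$. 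The chain rule yields $S_t(\widetilde{\bQ}^{(T)}_n \, | \, \PR_{x_n}) = S_T(\bQ^{(T)}_n \, | \, \PR_{x_n}|_{\cF_T})$ for $t \geq T$ and $S_t(\widetilde{\bQ}^{(T)}_n \, | \, \PR_{x_n}) = S_t(\bQ^{(T)}_n|_{\cF_t} \, | \, \PR_{x_n}|_{\cF_t})$ for $t \leq T$. To secure the $\limsup$-bound at intermediate times I would implement the Gamma-recovery in density form, taking $\bQ^{(T)}_n \propto F_n \cdot \PR_{x_n}|_{\cF_T}$ with $F_n$ a bounded continuous approximation to $d\bQ|_{\cF_T}/d\PR_x|_{\cF_T}$ (obtained by the usual truncate-and-mollify). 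By the Markov property of $\{\PR_y\}$, the conditional expectation $\bE_{\PR_y}[F_n \, | \, \cF_t]$, viewed as a functional of $X|_{[0,t]}$, does not depend on the starting point $y$; weak convergence $\PR_{x_n} \to \PR_x$ and boundedness of $F_n$ then give $\limsup_n S_t(\widetilde{\bQ}^{(T)}_n \, | \, \PR_{x_n}) \leq S_t(\bQ^T \, | \, \PR_x) \leq S_t(\bQ \, | \, \PR_x)$ for each $t$ outside the at most countable set of fixed discontinuities of $\bQ$.

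A diagonal argument along $T_n \to \infty$ chosen slowly then delivers $\bQ_n := \widetilde{\bQ}^{(T_n)}_n$ with $\bQ_n \to \bQ$ and $\limsup_n S_t(\bQ_n \, | \, \PR_{x_n}) \leq S_t(\bQ \, | \, \PR_x)$ for every $t$ (using monotonicity of $t \mapsto S_t$ to extend from generic $t$ to all $t$). The uniform majorant $S_t(\bQ_n \, | \, \PR_{x_n}) \leq S_t(\bQ \, | \, \PR_x) + 1$ is then enforced by the interpolation $(1 - \epsilon_n)\bQ_n + \epsilon_n \PR_{x_n}$ with $\epsilon_n \downarrow 0$, via convexity of relative entropy, and the final $\cS_\lambda$-inequality follows from reverse Fatou against $\tau_\lambda$ with dominating majorant $S_t(\bQ \, | \, \PR_x) + 1$, which is $\tau_\lambda$-integrable by hypothesis. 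The hard part will be the passage from the abstract Gamma-convergence of Proposition \ref{proposition:Gamma_convergence_relative_entropy}, which only controls the entropy on the terminal $\sigma$-algebra $\cF_T$, to the bound at every earlier time $t \leq T$; the density form of the recovery combined with the Markov property is what makes this work, whereas a naive diagonalization of Gamma-recoveries indexed by $t$ would fail because the different $t$-sequences are not mutually compatible.
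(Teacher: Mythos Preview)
Your route is genuinely different from the paper's. The paper never tries to control all intermediate times from a single terminal-time recovery. Instead it fixes, for each $k$, a finite grid $\cT_k = \{0 = t_0^k < \dots < t_{i_{\max}(k)}^k\}$ chosen so that both the mesh and the entropy increments $S_{t_{i+1}^k}(\bQ \,|\, \PR_x) - S_{t_i^k}(\bQ \,|\, \PR_x)$ are at most $k^{-1}$, and then proves a separate conditional $\Gamma$-$\limsup$ lemma: given a recovery sequence on a first factor and a \emph{common} weakly continuous family of conditional kernels on the second factor, one can extend the recovery to the product. The Markov property furnishes exactly such a common continuous kernel at each grid step, so induction over $\cT_k$ yields $\bQ_n^k$ with $\limsup_n S_{t_i^k}(\bQ_n^k \,|\, \PR_{x_n}) \leq S_{t_i^k}(\bQ \,|\, \PR_x)$ at grid times; a non-grid time $t$ is handled by the crude monotonicity bound $S_t \leq S_{t_{i^*}^k}$ with the next grid time $t_{i^*}^k$, incurring an error $\leq k^{-1}$. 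A diagonal $k(n) \to \infty$ removes this error. What this buys is that each step of the induction only has to recover entropy on a single $\sigma$-algebra, never on a continuum of them simultaneously.

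Your argument for intermediate times $t < T$ has a gap. The observation that $\bE_{\PR_y}[F_n \,|\, \cF_t]$, as a functional of the path, does not depend on the starting point $y$ is correct and is the right use of the Markov property. But the sentence ``weak convergence $\PR_{x_n} \to \PR_x$ and boundedness of $F_n$ then give $\limsup_n S_t(\widetilde{\bQ}^{(T)}_n \,|\, \PR_{x_n}) \leq S_t(\bQ^T \,|\, \PR_x)$'' hides a real difficulty: you need $\omega \mapsto G_t(\omega) := \int F_n(\omega \star \gamma)\, \PR_{\omega(t)}(d\gamma)$ to be continuous in order to push $G_t \log G_t$ through weak convergence, and this continuity only holds at paths with no jump at $t$ and requires continuity of the concatenation map in the Skorokhod topology. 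Moreover $F_n$ is simultaneously approximating the (possibly unbounded) density of $\bQ|_{\cF_T}$, so you face a triple diagonalisation over $(T, F, n)$ while maintaining the $\limsup$ inequality at \emph{every} $t$; this is not automatic and is exactly the coordination problem the paper's grid construction was designed to avoid.

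The interpolation step for the uniform majorant is wrong. Convexity gives only
\begin{equation*}
S_t\bigl((1-\epsilon_n)\bQ_n + \epsilon_n \PR_{x_n} \,\big|\, \PR_{x_n}\bigr) \leq (1-\epsilon_n)\, S_t(\bQ_n \,|\, \PR_{x_n}),
\end{equation*}
which is no smaller than $S_t(\bQ \,|\, \PR_x) + 1$ whenever $S_t(\bQ_n \,|\, \PR_{x_n})$ overshoots for some finite $n$; and since the $\limsup$ bound is pointwise in $t$, the index beyond which the overshoot disappears depends on $t$, so you cannot repair this by modifying finitely many terms. In the paper's conditional lemma the majorant is enforced at the source: the recovered conditional kernel is replaced by the reference kernel on the event where its conditional entropy exceeds the target by more than $1$, so the bound is built in pointwise rather than obtained after the fact by averaging.
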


We will construct the measures $\bQ_n$ by arguing via appropriately chosen finite-dimensional projections of $\bQ$. Thus, we need to establish a conditional version of the $\limsup_n$ inequality for Gamma convergence of relative entropy functionals. We state and prove this conditional result first, after which we prove Proposition \ref{proposition:gamma_convergence_integral_entropy}.

\begin{lemma} \label{lemma:conditional_gamma_entropy}
	Let $\cX,\cY$ be Polish spaces. Let $\nu, \mu \in \cP(\cX \times \cY)$ and suppose that $\mu_n$ are measures on $\cX \times \cY$ converging to $\mu$. 
	Denote by $\mu_{n,0}, \mu_0,\nu_0 \in \cP(\cX)$ the restrictions of $\mu_n,\mu,\nu$ to $\cX$.
	
	Suppose that 
	\begin{enumerate}
		\item There are measures $\nu_{0,n}$ on $\cX$ such that $\nu_{n,0}$ converges weakly to $\nu_0$ and such that $\limsup_{n \rightarrow \infty} S_{\cX}(\nu_{n,0} \, | \, \mu_{n,0}) \leq S_{\cX}(\nu_0 \, | \, \mu_0)$.
		\item Suppose there is a family of measures $\{\hat{\mu}(\cdot \, | \, x)\}_{x \in \cX}$ on $\cY$ that is weakly continuous in $x$. Suppose that this family of measures is a version of the regular conditional measures  $\mu_n(\cdot \, | \, x)$ and also of $\{\mu(\cdot \, | \, x)\}_{x \in \cX}$. 
	\end{enumerate}
	Then there are measures $\nu_n \in \cP(\cX \times \cY)$ converging to $\nu$ such that the restriction of $\nu_n$ to $\cX$ equals $\nu_{n,0}$ and $\limsup_{n \rightarrow \infty} S(\nu_n \, | \, \mu_n) \leq S(\nu \, | \, \mu)$.
\end{lemma}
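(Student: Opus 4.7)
The plan is to reduce to $S(\nu|\mu) < \infty$ (otherwise the $\limsup$ inequality is vacuous), disintegrate $\nu$ and $\mu$ via the common regular conditional $\hat\mu(\cdot|x)$, and build $\nu_n$ by coupling the prescribed marginal $\nu_{n,0}$ with a continuous-in-$x$ approximation of the true conditional distribution of $\nu$. Writing $g := d\nu/d\mu$, one has $\nu(dx\,dy) = g(x,y)\hat\mu(dy|x)\nu_0(dx)$; the chain rule for entropy gives
\begin{equation*}
S(\nu|\mu) = S_\cX(\nu_0|\mu_0) + \int \Psi(x)\,\nu_0(dx), \qquad \Psi(x) := S(g(x,\cdot)\hat\mu(\cdot|x) \, | \, \hat\mu(\cdot|x)).
\end{equation*}
The naive choice $\nu_n(dx\,dy) := g(x,y)\hat\mu(dy|x)\nu_{n,0}(dx)$ has the correct $\cX$-marginal and yields $S(\nu_n|\mu_n) = S_\cX(\nu_{n,0}|\mu_{n,0}) + \int\Psi\,d\nu_{n,0}$, but both the weak convergence $\nu_n\to\nu$ and the inequality $\limsup_n\int\Psi\,d\nu_{n,0}\leq\int\Psi\,d\nu_0$ would require continuity in $x$ of $x\mapsto g(x,\cdot)\hat\mu(\cdot|x)$ and of $\Psi$, which holds only measurably.

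To remedy this, I approximate the Borel map $\Phi: x\mapsto g(x,\cdot)\hat\mu(\cdot|x)\in\cP(\cY)$ by a continuous one. Lusin's theorem supplies increasing compacts $K_k\subseteq\cX$ with $\nu_0(\cX\setminus K_k)\downarrow 0$ on which both $\Phi$ and $\Psi$ are continuous; Dugundji's extension theorem (applicable since $\cP(\cY)$ is a convex subset of a locally convex space) extends $\Phi|_{K_k}$ to a continuous $\tilde\Phi^k:\cX\to\cP(\cY)$, and a convex combination with the benign baseline yields
\begin{equation*}
\hat\nu^k(\cdot|x) := (1-\phi_k(x))\hat\mu(\cdot|x) + \phi_k(x)\tilde\Phi^k(x),
\end{equation*}
where $\phi_k\in C_b(\cX;[0,1])$ equals $1$ on $K_k$ and is supported in a thin neighborhood of $K_k$. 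Convexity of $\tau\mapsto S(\tau|\hat\mu(\cdot|x))$ combined with $S(\hat\mu(\cdot|x)|\hat\mu(\cdot|x))=0$ yields $S(\hat\nu^k(\cdot|x)|\hat\mu(\cdot|x))\leq\phi_k(x)\,S(\tilde\Phi^k(x)|\hat\mu(\cdot|x))$, and this upper bound equals $\Psi(x)$ on $K_k$. Setting $\nu_n^k(dx\,dy) := \hat\nu^k(dy|x)\nu_{n,0}(dx)$ and $\nu^k(dx\,dy) := \hat\nu^k(dy|x)\nu_0(dx)$, continuity of $\hat\nu^k$ together with $\nu_{n,0}\to\nu_0$ weakly gives $\nu_n^k\to\nu^k$ weakly as $n\to\infty$, and agreement on $K_k\times\cY$ gives $\nu^k\to\nu$ in total variation as $k\to\infty$.

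The chain rule now gives $S(\nu_n^k|\mu_n) = S_\cX(\nu_{n,0}|\mu_{n,0}) + \int S(\hat\nu^k(\cdot|x)|\hat\mu(\cdot|x))\,\nu_{n,0}(dx)$; the hypothesis on marginals, the entropy bound just obtained, and continuity of $\Psi$ on $K_k$ combine to yield $\limsup_n S(\nu_n^k|\mu_n) \leq S(\nu^k|\mu) + o_k(1)$, while $S(\nu^k|\mu)\to S(\nu|\mu)$ as $k\to\infty$ by monotone convergence on the sets $K_k$. A diagonal selection $k=k(n)\to\infty$ sufficiently slowly produces $\nu_n := \nu_n^{k(n)}$ with the claimed properties. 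The principal obstacle is controlling the entropy off $K_k$: Dugundji extension is continuous but carries no entropy control, so the cutoff $\phi_k$ must be designed to have support of vanishing $\nu_0$-mass, and via Portmanteau applied to open neighborhoods of $K_k$ this null-mass is inherited by $\nu_{n,0}$ along the diagonal, forcing the off-$K_k$ contribution to be $o_k(1)$.
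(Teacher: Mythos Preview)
Your Lusin--Dugundji approach is natural but has a genuine gap in the ``off-$K_k$ contribution''. The Dugundji extension $\tilde\Phi^k(x)$ for $x\notin K_k$ lies in the convex hull of $\{\nu(\cdot\mid x'):x'\in K_k\}$. Each $\nu(\cdot\mid x')$ is absolutely continuous with respect to $\hat\mu(\cdot\mid x')$, but nothing forces it to be absolutely continuous with respect to $\hat\mu(\cdot\mid x)$ when $x\neq x'$. Hence $S\bigl(\tilde\Phi^k(x)\mid\hat\mu(\cdot\mid x)\bigr)$ may equal $+\infty$ on the entire annulus $\{\phi_k>0\}\setminus K_k$, and then so does $S\bigl(\hat\nu^k(\cdot\mid x)\mid\hat\mu(\cdot\mid x)\bigr)$, since a nontrivial convex combination inherits the singular part. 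Your Portmanteau argument at best makes $\nu_{n,0}\bigl(\{\phi_k>0\}\setminus K_k\bigr)$ small, but small mass times $+\infty$ still gives $+\infty$ in the chain-rule integral; thus $S(\nu_n^k\mid\mu_n)=+\infty$ and no diagonal extraction helps. In the intended application the kernels are $\hat\mu(\cdot\mid x)=\PR_x$, and for a typical Markov process (a deterministic flow, or a diffusion on path space) these are mutually singular for distinct starting points, so this failure mode is generic. A smaller point: if $g:=\dd\nu/\dd\mu$ then $\nu(\dd x\,\dd y)=g(x,y)\hat\mu(\dd y\mid x)\mu_0(\dd x)$, not $\nu_0(\dd x)$; your $\Phi(x)=g(x,\cdot)\hat\mu(\cdot\mid x)$ is a probability measure only if $g$ denotes the \emph{conditional} density $\dd\nu(\cdot\mid x)/\dd\hat\mu(\cdot\mid x)$.

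The paper avoids this obstruction by a different mechanism: Skorokhod-represent $\nu_{n,0},\nu_0$ as laws of $X_n\to X$ almost surely on a common space $(\Omega,\kappa)$, use continuity of $x\mapsto\hat\mu(\cdot\mid x)$ to get $\hat\mu(\cdot\mid X_n)\to\hat\mu(\cdot\mid X)$ a.s., and apply the unconditional $\Gamma$-$\limsup$ recovery result \emph{pointwise in $\omega$} to produce conditionals $\pi_n(\cdot\mid X_n)\to\nu(\cdot\mid X)$ with $\limsup_n S\bigl(\pi_n\mid\hat\mu(\cdot\mid X_n)\bigr)\leq\Psi(X)$. The key device is then to replace $\pi_n$ by the benign $\hat\mu(\cdot\mid X_n)$ on the event where its entropy exceeds $\Psi(X)+1$; this manufactures the $\kappa$-integrable majorant $\Psi(X)+1$ needed for Fatou without ever extending or interpolating conditionals off a good set. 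If you want to salvage your route, you would need to extend not $\Phi$ but a \emph{bounded} truncation of the conditional density (say $g\wedge M_k$, renormalised), so that the extended conditionals stay absolutely continuous with entropy bounded by $\log M_k$; this is workable but requires an additional layer $M_k\to\infty$ in the diagonal argument.
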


	\begin{proof}
		First of all, note that if $S(\nu \, | \, \mu) = \infty$, the proof is trivial. Thus, assume $S(\nu \, | \, \mu) < \infty$.
		
		Denote by $\nu(\cdot \, | \, x)$ a version of the regular conditional probability of $\nu$ conditional on $x \in \cX$. By the Skorokhod representation theorem, \cite[Theorem 8.5.4]{Bo07}, we can find a probability space $(\Omega,\cA)$ and a measure $\kappa$ on $(\Omega,\cA)$, and random variables $X_n, X : \Omega \rightarrow \cX$ such that the random variables $X_n$ and $X$ under the law $\kappa$ have distributions $\nu_{n,0}$ and $\nu_0$ and such that $X_n$ converges to $X$ $\kappa$ almost surely.

		Thus, by assumption, there is a set $B \in \cA$ of $\kappa$ measure $1$ on which $X_n \rightarrow X$ and on which $\mu_n(\cdot \, | \, X_n) = \hat{\mu}(\cdot \, | \, X_n)$ converges to $\mu(\cdot \, | \, X) = \hat{\mu}(\cdot \, | \, X)$. It follows by Proposition \ref{proposition:Gamma_convergence_relative_entropy} that on this set there are measures $\pi_n(\cdot \, | \, X_n)$ such that:
		\begin{gather*}
		\lim_{n \rightarrow \infty} \pi_n(\cdot \, | \, X_n) = \nu(\cdot \, | \, X) \qquad \text{weakly}, \\
		\limsup_n S(\pi_n(\cdot \, | \, X_n) | \, \mu_n(\cdot \, | \, X_n)) \leq S(\nu(\cdot \, | \, X) | \, \mu(\cdot \, | \, X)).
		\end{gather*}
		We could construct a sequence of measures $\nu_n$ out of $\nu_0$ and the conditional kernels $\pi_n$. To establish the $\limsup_n$ inequality for the relative entropies, however, we will need to interchange a $\limsup_n$ and an integral by using Fatou's lemma. At this point, we are not able to give a dominating function that will allow the application of Fatou. To solve this issue, we will use $\pi_n$ only when its relative entropy is not to large.

		Set $A_n := \left\{\omega \in \Omega \, | \, S(\pi_n(\cdot \, | \, X_n(\omega)) \, | \, \mu_n(\cdot \, | \, X_n(\omega))) \leq S(\nu(\cdot \, | \, X(\omega)) \, | \, \mu(\cdot \, | \, X(\omega))) + 1 \right\}$. Note that $\liminf_n A_n$ has $\kappa$ measure $1$. Now set
		\begin{equation*}
		\nu_n(\cdot \, | \, X_n) := \begin{cases}
		\pi_n(\cdot \, | \, X_n) & \text{if } X_n \in A_n, \\
		\mu_n(\cdot \, | \, X_n) & \text{if } X_n \notin A_n,
		\end{cases}
		\end{equation*}
		and define $\nu_n(\dd x, \dd y) = \int \nu_n( \dd y \, | \, x) \nu_0(\dd x)$. We will establish that
		\begin{enumerate}[(1)]
			\item $\limsup_n S(\nu_n \, | \, \mu_n) \leq S(\nu \, | \, \mu)$,
			\item $\nu_n \rightarrow \nu$.
		\end{enumerate}
		
		We start with the proof of (1). By construction and Proposition \ref{proposition:relative_entropy_decomposition}, we have
		\begin{align*}
		\limsup_n S(\nu_n \, | \, \mu_n) & \leq \limsup_n  S_\cX(\nu_{0,n} \, | \, \mu_{0,n}) + \limsup_n \int S(\nu_n(\cdot \, | \, x) \, | \, \mu_n(\cdot \, | \, x)) \nu_{n,0}(\dd x) \\
		& \leq S_\cX(\nu_{0} \, | \, \mu_{0}) + \limsup_n \bE_{\kappa}\left[ S(\nu_n(\cdot \, | \, X_n) \, | \, \mu_n(\cdot \, | \, X_n)) \right] \\
		& \leq S_\cX(\nu_{0} \, | \, \mu_{0}) + \bE_{\kappa}\left[  \limsup_n S(\nu_n(\cdot \, | \, X_n) \, | \, \mu_n(\cdot \, | \, X_n)) \right] \\
		& \leq S_\cX(\nu_{0} \, | \, \mu_{0}) + \bE_{\kappa}\left[ S(\nu(\cdot \, | \, X) \, | \, \mu(\cdot \, | \, X)) \right] \\
		& = S(\nu \, | \, \mu).
		\end{align*}
		In line 3, we used Fatou's lemma, using as an upper bound the function $S(\nu(\cdot \, | \, X) \, | \, \mu(\cdot \, | \, X)) + 1$. This function has finite $\kappa$ integral as
		\begin{equation*}
		\bE_{\kappa} \left[ S(\nu(\cdot \, | \, X) \, | \, \mu(\cdot \, | \, X)) \right] = S(\nu \, | \mu) - S(\nu_0 \, | \, \mu_0)  < \infty.
		\end{equation*}
		Next, we establish (2): $\nu_n \rightarrow \nu$. By (1) and Proposition \ref{proposition:equi_coercivity_relative_entropy} the collection of measures $\nu_n$ is tight. As a consequence, it suffices to establish that $\int h \dd \nu_n \rightarrow \int h \dd \nu$ for a strictly dense set of functions $h$ that is also an algebra by the Stone-Weierstrass theorem for the strict topology. Clearly, the set of linear combinations of functions of the form $h(x,y) = f(x)g(y)$ is an algebra that separates points. Thus, it suffices to establish convergence for $h(x,y) = f(x)g(y)$ only. For $h$ of this form, we have
		\begin{align*}
		\int f(x)g(y) \nu_n(\dd x,\dd y) & = \int f(x) \left( \int g(y) \nu_n(\dd y \, | x) \right) \nu_{n,0}(\dd x) \\
		& = \bE_{\kappa}\left[ f(X_n) \left( \int g(y) \nu_n(\dd y \, | X_n) \right)\right] 
		\end{align*}
		By the weak convergence of $\nu_n(\cdot \, | \, X_n)$ to $\nu(\cdot \, | \, X)$ on a set of $\kappa$ measure $1$, we find by the dominated convergence theorem that
		\begin{equation*}
		\bE_{\kappa}\left[ f(X_n) \left( \int g(y) \nu_n(\dd y \, | X_n) \right)\right] \rightarrow \bE_{\kappa}\left[ f(X) \left( \int g(y) \nu(\dd y \, | X) \right)\right].
		\end{equation*}
		This establishes that $\int h \dd \nu_n \rightarrow \int h \dd \nu$ for $h(x,y) = f(x)g(y)$ and thus that $\nu_n \rightarrow \nu$. 
	\end{proof}

\begin{proof}[Proof of Proposition \ref{proposition:gamma_convergence_integral_entropy}]
	First of all: we can choose finite collections of times $\cT_k := \left\{0 = t_0^k < t_1^k < \dots, < t^k_{i_{\max(k)}} \right\}$,  $k \in \{1,2,\dots\}$ such that: 
	\begin{itemize}
		\item $\cT_k \subseteq \cT_{k+1}$,
		\item $t_{i_{\max}(k)} \geq k$,
		\item For all $k$, and $i \leq i_{\max(k)}$: $t_{i+1}^{k} \leq t_i^k + k^{-1}$,
		\item For all $k$, and $i \leq i_{\max(k)}$: $S_{t_{i+1}^{k}}(\bQ \, | \, \PR_x) \leq S_{t_{i}^{k}}(\bQ \, | \, \PR_x) + k^{-1}$.
	\end{itemize}
	For any $k$, we find by Lemma \ref{lemma:conditional_gamma_entropy} and induction over the finite collection of times in $\cT_k$ that there are measures $\bQ_n^k \in \cP(D_E(\bR^+))$ such that 
	\begin{enumerate}[(1)]
		\item for all  $t \geq t_{i_{\max}(k)}$:
		\begin{multline*}
		\limsup_n S_t(\bQ_n^k \, | \, \PR_{x_n}) = \limsup_n S_{\cT_k}(\bQ_n^k \, | \, \PR_{x_n}) \\
		\leq S_{\cT_k}(\bQ \, | \, \PR_x) \leq S_{t^k_{i_{\max}(k)}}(\bQ \, | \, \PR_x) \leq S_t(\bQ \, | \, \PR_x).
		\end{multline*}
		\item If $t < t_{i_{\max}(k)}$, let $t^k_{i^*}$ be the smallest time in $\cT_k$ such that $t^k_{i^*} \geq t$. Then:
		\begin{multline*}
		\limsup_n S_t(\bQ_n^k \, | \, \PR_{x_n}) \leq \limsup_n S_{t^k_{i^*}}(\bQ_n^k \, | \, \PR_{x_n}) = \limsup_n S_{\cT_k\cap [0,t^k_{i^*}]}(\bQ_n^k \, | \, \PR_{x_n}) \\
		\leq S_{\cT_k \cap [0,t^k_{i^*}]}(\bQ \, | \, \PR_x) \leq S_{t^k_{i^*}}(\bQ \, | \, \PR_x) \leq S_t(\bQ \, | \, \PR_x) + k^{-1}.
		\end{multline*}
	\end{enumerate}
	Thus, we obtain for all $t \geq 0$ that
	\begin{equation*}
	\sup_{n} \sup_k S_t(\bQ_n^k \, | \, \PR_{x_n}) < \infty
	\end{equation*}
	which implies by Proposition \ref{proposition:equi_coercivity_relative_entropy} that the family $\bQ_n^k$ is tight. By construction, i.e. Lemma  \ref{lemma:conditional_gamma_entropy}, the restrictions of the measures $\bQ_n^k$ to the set of times $\cT_k$ converge to the restriction of $\bQ$ to the times in $\cT_k$. A straightforward diagonal argument can be used to find $k(n)$ such that restriction of the measures $\bQ_n := \bQ_{n}^{k(n)}$ to the union $\bigcup_k \cT_k$ to $\bQ$ restricted to the union $\bigcup_k \cT_k$. This however, establishes that $\bQ_n$ converges to $\bQ$ by Theorem 3.7.8 of \cite{EK86}.
	
\end{proof}

\subsection{Regularity of the resolvent in $x$}\label{subsection:resolvent_continuous}

We proceed with the proof of Proposition \ref{proposition:continuity_of_resolvent}: establishing $R(\lambda)h \in C_b(E)$. For the proof of upper semi-continuity of $x \mapsto R(\lambda)h(x)$ we use the following technical result that we state for completeness.

\begin{lemma}[Lemma 17.30 in \cite{AlBo06}] \label{lemma:upper_semi_continuity_abstract}
	Let $\cX$ and $\cY$ be two Polish spaces. Let $\phi : \cX \rightarrow \cK(\cY)$, where $\cK(\cY)$ is the space of non-empty compact subsets of $\cY$, be upper hemi-continuous. That is: if $x_n \rightarrow x$ and $y_n \rightarrow y$ and $y_n \in \phi(x_n)$, then $y \in \phi(x)$. 
	
	Let $f : \text{Graph} (\phi) \rightarrow \bR$ be upper semi-continuous. Then the map $m(x) = \sup_{y \in \phi(x)} f(x,y)$ is upper semi-continuous.
\end{lemma}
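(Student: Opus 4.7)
The plan is to verify the sequential characterisation of upper semi-continuity of $m$: given a sequence $x_n \to x$ in $\cX$, I would establish that $\limsup_n m(x_n) \leq m(x)$.

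First, I would observe that the supremum defining $m(x_n)$ is attained. Since $\phi(x_n)$ is a non-empty compact subset of $\cY$ and the slice $y \mapsto f(x_n,y)$ is upper semi-continuous on $\phi(x_n)$ (being the restriction of the upper semi-continuous $f$ to a slice of its graph), there is $y_n \in \phi(x_n)$ with $f(x_n,y_n) = m(x_n)$. Passing to a subsequence, I may assume $m(x_n) \to \limsup_n m(x_n)$. The crux is then to produce a further subsequence $y_{n_k} \to y^\ast$ with $y^\ast \in \phi(x)$; once this is available, upper semi-continuity of $f$ on the graph yields
\begin{equation*}
\limsup_k f(x_{n_k},y_{n_k}) \leq f(x,y^\ast) \leq m(x),
\end{equation*}
and the lemma follows.

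To carry out the subsequence extraction I would use the compact-valuedness of $\phi$ in combination with the upper hemi-continuity hypothesis. In the metric setting, compact-valued upper hemi-continuity admits the topological reformulation that for every open $V \supseteq \phi(x)$ there is a neighbourhood $U$ of $x$ with $\phi(U) \subseteq V$. Applying this with a decreasing sequence of open neighbourhoods $V_k$ of the compact set $\phi(x)$ whose closures shrink to $\phi(x)$, I obtain $d(y_n, \phi(x)) \to 0$. Compactness of $\phi(x)$ then furnishes a convergent subsequence $y_{n_k} \to y^\ast \in \phi(x)$, and the limit lies in $\phi(x)$ by the closed-graph property stated in the hypothesis.

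The main obstacle is precisely this subsequence extraction: the sequential closed-graph formulation written in the lemma is not, taken on its own, enough to force tightness of the maximising sequence $\{y_n\}$. What makes the argument work is the conjunction of compact values in a metric target with the Berge-style neighbourhood characterisation of upper hemi-continuity; proving the equivalence of the two formulations is the technical content that is being invoked from Aliprantis--Border. Once that equivalence is granted, the rest of the argument is the routine USC-on-compact-set optimisation argument above.
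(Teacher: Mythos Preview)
The paper gives no proof; the lemma is quoted from Aliprantis--Border. Your strategy---extract maximisers $y_n\in\phi(x_n)$, pass to a convergent subsequence with limit in $\phi(x)$, then use upper semi-continuity of $f$ on the graph---is the standard Berge argument and is correct \emph{provided} the neighbourhood formulation of upper hemi-continuity holds.

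The gap is your claimed equivalence. The closed-graph condition in the paper's ``That is'' clause is strictly weaker than the Berge neighbourhood formulation, even for compact-valued correspondences between Polish spaces, so there is no equivalence theorem for you to invoke. Counterexample: take $\cX=\cY=\bR$, $\phi(0)=\{0\}$, $\phi(x)=\{1/x\}$ for $x\neq 0$. This has singleton values and closed graph (no sequence $(x_n,1/x_n)$ with $x_n\to 0$ converges in $\bR^2$), yet fails the neighbourhood condition at $0$; setting $f(0,0)=0$ and $f\equiv 1$ elsewhere on the graph (upper semi-continuous, since $(0,0)$ is isolated in the graph) gives $m(0)=0$ and $m(x)=1$ for $x\neq 0$, so $m$ is not upper semi-continuous. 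Hence the lemma, read literally with closed graph as the \emph{definition} of upper hemi-continuity, is false. Aliprantis--Border's actual Lemma 17.30 assumes the genuine topological definition; the paper's gloss is an imprecision. The paper's only application (proof of Proposition~\ref{proposition:continuity_of_resolvent}) is nonetheless sound: Lemma~\ref{lemma:compact_level_sets_integrated_relative_entropy} supplies a fixed compact set containing all $\Gamma_{x_n}$ for $x_n$ near $x$, and closed graph together with such local compact containment does recover full upper hemi-continuity.
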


\begin{proof}[Proof of Proposition \ref{proposition:continuity_of_resolvent}]
	Fix $\lambda > 0$ and $h \in C_b(E)$. Denote as before
	\begin{equation*}
	\cS_\lambda(\bQ \, | \, \PR_x) := \int_0^\infty S_t(\bQ \, | \, \PR_x) \tau_\lambda(\dd t),
	\end{equation*} 
	to shorten notation. By Lemma \ref{lemma:compact_level_sets_integrated_relative_entropy} the map $\bQ \mapsto \cS_\lambda(\bQ \, | \, \PR_x)$ has compact sub-levelsets and is lower semi-continuous. As $h$ is bounded we have
	\begin{equation*}
	R(\lambda)h(x) = \sup_{\bQ \in \Gamma_x} \left\{ \int h(X(t)) \bQ(\dd X)\tau_\beta(\dd t) - \cS_\lambda(\bQ \, | \, \PR_x)  \right\}
	\end{equation*}
	where $\Gamma_x := \left\{\bQ \in \cP \, \middle| \, \cS_\lambda(\bQ \, | \, \PR_x) \leq 2\vn{h} \right\}$. Note that $\Gamma_x$ is non-empty and compact.
	
	Due to the lower semi-continuity of $\cS_\lambda$ and the continuity of the integral over $h$, it follows that $x \mapsto R(\lambda)h(x)$ is upper semi-continuous by Lemma 17.30 of \cite{AlBo06} if the collection of sets $\Gamma_x$ is \textit{upper hemi-continuous}; or in other words: if $\bQ_n \in \Gamma_{x_n}$ and $(x_n,\bQ_n) \rightarrow (x,\bQ)$ then $\bQ \in \Gamma_x$. This, however, follows directly from the lower semi-continuity of $\cS_\lambda$.
	
	\smallskip
	
	Next, we establish lower semi-continuity of $x \mapsto R(\lambda)h(x)$. Let $x_n$ be a sequence converging to $x$. Pick $\bQ$ so that
	\begin{equation*}
	R(\lambda)h(x) = \int_0^\infty \int h(X(t)) \bQ(\dd X) \tau_\lambda(\dd t) - \cS_\lambda(\bQ \, | \PR_x) 
	\end{equation*}
	It follows by Proposition \ref{proposition:gamma_convergence_integral_entropy} that there are $\bQ_n \in \cP(D_E(\bR^+))$ such that $\bQ_n \rightarrow \bQ$ and $\limsup_n \cS_\lambda(\bQ_n \, | \, \PR_{x_n}) \leq \cS_\lambda(\bQ \, | \, \PR_x)$. We obtain that
	\begin{align*}
	\liminf_n R(\lambda)h(x_n) & \geq \liminf_n \int_0^\infty \int h(X(t)) \bQ_n(\dd X) \tau_\lambda(\dd t) - \cS_\lambda(\bQ_n \, | \, \PR_{x_n}) \\
	& \geq \int_0^\infty \int h(X(t)) \bQ(\dd X) \tau_\lambda(\dd t) - \cS_\lambda(\bQ \, | \, \PR_x)  = R(\lambda)h(x)
	\end{align*}
	establishing lower semi-continuity.
\end{proof}

\subsection{Regularity of the resolvent in $h$} \label{subsection:Markov_continuity}

We proceed with establishing that the resolvent is sequentially strictly continuous in $h$, uniformly for small $\lambda$.

\begin{lemma} \label{lemma:strict_equicontinuity_of_resolvent}
	For every $\lambda_0>0$ the family of maps $\{R(\lambda)\}_{0<\lambda \leq \lambda_0}$ is sequentially strictly equi-continuous. That is: for every $h_1,h_2 \in C_b(E)$, every compact set $K \subseteq E$ and $\delta > 0$ there is a compact set $\hat{K} \subseteq E$ such that
	\begin{equation*}
	\sup_{x \in K} R(\lambda) h_1(x) - R(\lambda)h_2(x) \leq \delta + \sup_{y \in \hat{K}} h_1(y) - h_2(y).
	\end{equation*}
	for all $0 < \lambda \leq \lambda_0$.
\end{lemma}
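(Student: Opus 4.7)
The plan is to dominate $R(\lambda)h_1 - R(\lambda)h_2$, up to a small error, by a $\tau_\lambda$-time-averaged evaluation of $h_1 - h_2$ against a measure on $E$ that is tight uniformly over $x \in K$ and $\lambda \in (0, \lambda_0]$. For the first step, for each such pair $(x,\lambda)$, I will pick $\bQ_x^\lambda \in \cP$ achieving $R(\lambda)h_1(x)$ to within slack $\delta/3$. Testing the same $\bQ_x^\lambda$ inside the supremum defining $R(\lambda)h_2(x)$ cancels the entropy cost and yields
\[
R(\lambda)h_1(x) - R(\lambda)h_2(x) \leq \int_0^\infty \int (h_1-h_2)(X(t))\, \bQ_x^\lambda(\dd X)\, \tau_\lambda(\dd t) + \delta/3.
\]
Comparing $R(\lambda)h_1(x)$ to its value at the feasible point $\bQ = \PR_x$ simultaneously gives the a priori bound $\cS_\lambda(\bQ_x^\lambda \,|\, \PR_x) \leq 2\vn{h_1} + \delta/3$.

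Second, I will Markov-truncate $\bQ_x^\lambda$ to fit the uniform compactness regime of Proposition \ref{proposition:uniform_compact_levelsets}. Fix $\epsilon \in (0,1)$ to be chosen, set $T(\lambda) := -\lambda\log\epsilon$, and let $\tilde{\bQ}_x^\lambda$ be the measure that agrees with $\bQ_x^\lambda$ on $\cF_{T(\lambda)}$ and whose conditional law past $T(\lambda)$ is $\PR_{X(T(\lambda))}$. The chain rule for relative entropy combined with the Markov property of $\{\PR_x\}$, inherent in well-posedness of the martingale problem, yields $S_t(\tilde{\bQ}_x^\lambda\, |\, \PR_x) = S_{\min\{t, T(\lambda)\}}(\bQ_x^\lambda\, |\, \PR_x) \leq S_t(\bQ_x^\lambda\, |\, \PR_x)$, so $\cS_\lambda(\tilde{\bQ}_x^\lambda\,|\,\PR_x) \leq 2\vn{h_1}+\delta/3$ and $S_{T(\lambda)}(\tilde{\bQ}_x^\lambda\,|\,\PR_x) = S(\tilde{\bQ}_x^\lambda\,|\,\PR_x)$. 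Condition \ref{condition:Markov_single} makes $\{\PR_x : x \in K\}$ compact, so Proposition \ref{proposition:uniform_compact_levelsets} places every $\tilde{\bQ}_x^\lambda$ inside a single relatively compact set $\Gamma \subseteq \cP$. Since $\bQ_x^\lambda$ and $\tilde{\bQ}_x^\lambda$ coincide on $\cF_{T(\lambda)}$ while $\tau_\lambda$ has mass $\epsilon$ on $(T(\lambda), \infty)$, replacing $\bQ_x^\lambda$ by $\tilde{\bQ}_x^\lambda$ in the integral above costs at most $2\vn{h_1-h_2}\epsilon$.

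Third, I will transfer tightness on $\cP$ to tightness on $E$. Given $\delta_0 > 0$, pick a compact $A \subseteq D_E(\bR^+)$ with $\tilde{\bQ}(A^c) \leq \delta_0$ for all $\tilde{\bQ} \in \Gamma$, and pick $T_0$ with $e^{-T_0/\lambda_0} \leq \delta_0$. The Skorokhod compactness criterion (\cite[Theorem 3.6.3]{EK86}) makes $\hat K := \overline{\{X(t) : X \in A,\, t \in [0, T_0]\}}$ a compact subset of $E$. Since $\{X(t) \notin \hat K\} \subseteq A^c$ for $t \leq T_0$, the $\tau_\lambda$-averaged marginal $\nu_x^\lambda := \int_0^\infty (\pi_t)_* \tilde{\bQ}_x^\lambda \, \tau_\lambda(\dd t)$ satisfies $\nu_x^\lambda(\hat K^c) \leq \delta_0 + e^{-T_0/\lambda} \leq 2\delta_0$. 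A split of $\int (h_1 - h_2)\, \dd \nu_x^\lambda$ across $\hat K$ and $\hat K^c$ bounds it by $\sup_{\hat K}(h_1-h_2) + 4\vn{h_1-h_2}\delta_0$. Chaining the three estimates and choosing $\epsilon, \delta_0$ small enough so that the trailing errors sum to at most $2\delta/3$ yields the asserted inequality.

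The main technical obstacle is the entropy identity $S_t(\tilde{\bQ}_x^\lambda\,|\,\PR_x) = S_{\min\{t, T(\lambda)\}}(\bQ_x^\lambda\,|\,\PR_x)$ underlying the Markov truncation. Its proof reduces to the chain rule for conditional relative entropy at time $T(\lambda)$, together with the fact that the regular conditional distribution of $\PR_x$ given $\cF_{T(\lambda)}$ coincides with $\PR_{X(T(\lambda))}$, which is a consequence of well-posedness of the martingale problem. Once that identity is in hand, the rest of the argument is deterministic bookkeeping and a careful choice of the parameters $\epsilon$, $\delta_0$, $T_0$ against the target slack $\delta$.
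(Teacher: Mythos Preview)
Your proof is correct and follows essentially the same route as the paper: choose a near-optimizer $\bQ_x^\lambda$ for $R(\lambda)h_1(x)$, cancel the entropy against the supremum defining $R(\lambda)h_2(x)$, Markov-truncate at time $T(\lambda)=-\lambda\log\epsilon$ to land in the regime of Proposition~\ref{proposition:uniform_compact_levelsets}, and then convert the resulting tightness on $\cP$ into a compact $\hat K\subseteq E$. Your write-up is in fact somewhat more explicit than the paper's in two places---the entropy identity $S_t(\tilde\bQ\,|\,\PR_x)=S_{t\wedge T(\lambda)}(\bQ\,|\,\PR_x)$ underlying the truncation, and the passage from path-space tightness to a compact $\hat K$ via a finite-horizon cutoff $T_0$---but the underlying argument is the same.
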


As above denote by $\cS_{\lambda}(\bQ \, | \, \PR) := \int_0^\infty S_t(\bQ \, | \, \PR) \tau_{\lambda}(\dd t)$.

\begin{proof}
	Fix $h_1,h_2 \in C_b(E)$, $\lambda_0 > 0$, $\delta > 0$ and a compact set $K \subseteq E$.
	
	Pick an arbitrary $\lambda$ such that $0 < \lambda \leq \lambda_0$.	For $x \in K$, let $\bQ_{x,\lambda} \in \cP$ be the measure such that
	\begin{equation*}
	R(\lambda) h_1(x) = \left\{ \int_0^\infty \int h_1(X(t)) \bQ_{x,\lambda}(\dd X) \tau_\lambda(\dd t) - \cS_\lambda(\bQ_{x,\lambda} \, | \, \PR_x ) \right\}
	\end{equation*}
	and note that $\cS_\lambda(\bQ_{x,\lambda} \, | \, \PR_x) \leq 2 \vn{h_1}$.	It follows that
	\begin{align*} 
	& \sup_{x \in K} R(\lambda) h_1(x) - R(\lambda)h_2(x) \\
	& \leq \sup_{\bQ_1 \in \cP} \left\{ \int_0^\infty \int h_1(X(t)) \bQ_1(\dd X) \tau_{\lambda}(\dd t) - \cS_\lambda(\bQ_{1} \, | \, \PR_x )  \right\} \\
	& \qquad - \sup_{\bQ_2 \in \cP} \left\{ \int_0^\infty \int h_2(X(t)) \bQ_2(\dd X) \tau_{\lambda}(\dd t) - \cS_\lambda(\bQ_{2} \, | \, \PR_x )  \right\}. \\
	& \leq \sup_{x \in K} \int_0^\infty \int h_1(X(t)) - h_2(X(t)) \bQ_{x,\lambda}(\dd X) \tau_\lambda(\dd t).
	\end{align*}
	Denote by $T(\lambda) := - \lambda \log \frac{\delta}{2\vn{h_1-h_2}}$. Then it follows that
	\begin{equation*}
	\sup_{x \in K} R(\lambda) h_1(x) - R(\lambda)h_2(x) \leq \frac{\delta}{2} + \sup_{x \in K} \int_0^{T(\lambda)} \int h_1(X(t)) - h_2(X(t)) \bQ_{x,\lambda}(\dd X) \tau_\lambda(\dd t).
	\end{equation*}
	Now denote by $\widehat{\bQ}_{x,\lambda}$ the measure that equals $\bQ_{x,\lambda}$ on the time interval $[0,T(\lambda)]$ and satisfies $S_{T(\lambda)}(\widehat{\bQ}_{x,\lambda} \, | \PR_x) = S(\widehat{\bQ}_{x,\lambda} \, | \PR_x)$. By Proposition \ref{proposition:uniform_compact_levelsets} the set of the measures $\widehat{\bQ}_{x,\lambda}$, $x \in K, 0 < \lambda \leq \lambda_0$, is relatively compact, which implies we can find a $\widehat{K} \subseteq E$ with probability $(1-\frac{\delta}{2})$ the trajectories stay in $\widehat{K}$. We conclude that
	\begin{equation*}
	\sup_{x \in K} R(\lambda) h_1(x) - R(\lambda)h_2(x) \leq \delta + \sup_{y \in \hat{K}} h_1(y) - h_2(y)
	\end{equation*}
	for all $\lambda$ such that $0 < \lambda \leq \lambda_0$.
\end{proof}

\subsection{Strong continuity of the resolvent and semigroup} \label{subsection:strong_continuity}

We establish that as $\lambda \downarrow 0$ the resolvents converge to the identity operator. We also establish strict continuity of the semigroup.

\begin{lemma} \label{lemma:continuity_resolvent_at_0}
	For $h \in C_b(E)$  we have $\lim_{\lambda \rightarrow 0} R(\lambda) h = h$ for the strict topology.
\end{lemma}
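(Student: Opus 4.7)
The plan is to verify the two features of strict convergence to $h$: boundedness $\sup_{\lambda>0}\vn{R(\lambda)h}\leq\vn{h}$, which is immediate from $\cS_\lambda(\bQ\mid\PR_x)\geq 0$ and $\left|\int h(X(t))\bQ(\dd X)\right|\leq\vn{h}$ (taking $\bQ=\PR_x$ gives the matching lower bound); and uniform convergence on every compact $K\subseteq E$, i.e.\ $\sup_{x\in K}|R(\lambda)h(x)-h(x)|\to 0$ as $\lambda\downarrow 0$. Since $\tau_\lambda$ is a probability measure,
\begin{equation*}
R(\lambda)h(x)-h(x)=\sup_{\bQ\in\cP}\left\{\int_0^\infty\!\!\int(h(X(t))-h(x))\,\bQ(\dd X)\,\tau_\lambda(\dd t)-\cS_\lambda(\bQ\mid\PR_x)\right\},
\end{equation*}
and I treat the two one-sided bounds separately.

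For the lower bound, the choice $\bQ=\PR_x$ (entropy zero) gives $R(\lambda)h(x)-h(x)\geq\int_0^\infty\bE_x[h(X(t))-h(x)]\,\tau_\lambda(\dd t)$. Weak continuity of $x\mapsto\PR_x$ together with compactness of $K$ makes $\{\PR_x\}_{x\in K}$ a compact, hence tight, subset of $\cP$; by the Skorokhod characterisation of compactness in $D_E(\bR^+)$ the modulus $w'_X(\delta,T)$ is small uniformly on a large-probability set of paths, which forces $\sup_{t<s_0}d(X(t),x)<\epsilon$ with high probability uniformly in $x\in K$. Combined with uniform continuity of $h$ on a compact neighbourhood this yields $\sup_{x\in K,\,t\leq s_0}|\bE_x[h(X(t))]-h(x)|\to 0$ as $s_0\downarrow 0$. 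Splitting the $\tau_\lambda$-integral at $s_0$ and bounding the tail by $2\vn{h}e^{-s_0/\lambda}\to 0$ closes the lower bound uniformly on $K$.

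For the upper bound, fix $\delta>0$ and pick $\delta$-near-optimisers $\bQ_{x,\lambda}$; since $R(\lambda)h\geq-\vn{h}$ we have $\cS_\lambda(\bQ_{x,\lambda}\mid\PR_x)\leq 2\vn{h}+\delta$, and dropping this nonnegative term gives
\begin{equation*}
R(\lambda)h(x)-h(x)\leq\delta+\int_0^\infty\int(h(X(t))-h(x))\,\bQ_{x,\lambda}(\dd X)\,\tau_\lambda(\dd t).
\end{equation*}
Fix $\varepsilon\in(0,1)$, set $T(\lambda):=-\lambda\log\varepsilon$, and note the tail $t>T(\lambda)$ contributes at most $2\vn{h}\varepsilon$. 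I then introduce $\widehat\bQ_{x,\lambda}$, the measure agreeing with $\bQ_{x,\lambda}$ on $\cF_{T(\lambda)}$ and following the Markov dynamics of $\PR_x$ conditionally on $\cF_{T(\lambda)}$ thereafter; this keeps the integrand for $t\leq T(\lambda)$ unchanged, satisfies $S_{T(\lambda)}(\widehat\bQ_{x,\lambda}\mid\PR_x)=S(\widehat\bQ_{x,\lambda}\mid\PR_x)$, and only decreases $\cS_\lambda$. The family $\{\widehat\bQ_{x,\lambda}\}_{x\in K,\,0<\lambda\leq\lambda_0}$ therefore falls under Proposition~\ref{proposition:uniform_compact_levelsets} applied with base set $\{\PR_x:x\in K\}$, so it is relatively compact in $\cP$. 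Re-running the Skorokhod-modulus argument of the preceding paragraph on this tight family and using $T(\lambda)\to 0$ shows that the remaining $t\leq T(\lambda)$ contribution vanishes uniformly in $x\in K$; letting $\delta,\varepsilon\downarrow 0$ concludes.

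The central difficulty is the uniformity in $x\in K$: pointwise, dominated convergence applied to $\bE_x[h(X(t))-h(x)]\to 0$ suffices, but this does not automatically upgrade to uniformity, and the optimisers $\bQ_{x,\lambda}$ are allowed to tilt more and more aggressively as $\lambda\downarrow 0$. The decisive input to overcome both issues simultaneously is the $\lambda$-uniform equi-coercivity of Proposition~\ref{proposition:uniform_compact_levelsets}, which confines the modified optimisers $\widehat\bQ_{x,\lambda}$ to a single compact subset of $\cP$; combined with the Skorokhod-space fact that paths in a compact set cannot deviate from their starting point until after a uniform positive time, this delivers the uniform equi-right-continuity at $t=0$ required to close both bounds at once.
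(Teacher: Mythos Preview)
Your proof is correct and follows essentially the same route as the paper: bound $\vn{R(\lambda)h}\leq\vn{h}$, take $\bQ=\PR_x$ for the lower bound, and for the upper bound pick (near-)optimisers, truncate at $T(\lambda)=-\lambda\log\varepsilon$, replace by the Markov-extended measures $\widehat\bQ_{x,\lambda}$, and use tightness of this family to control $h(X(t))-h(x)$ via the Skorokhod modulus at $t=0$. The only cosmetic difference is that you invoke Proposition~\ref{proposition:uniform_compact_levelsets} directly for the equi-coercivity, whereas the paper re-derives the entropy bound $S_{T(\lambda)}(\widehat\bQ_{x,\lambda}\mid\PR_x)\leq 8\vn{h}^2\varepsilon^{-1}$ by hand and then cites the more basic Proposition~\ref{proposition:equi_coercivity_relative_entropy}; your packaging is arguably cleaner since Proposition~\ref{proposition:uniform_compact_levelsets} was built for exactly this purpose.
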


\begin{proof}
	As $\vn{R(\lambda)h} \leq \vn{h}$ strict convergence $\lim_{\lambda \rightarrow 0} R(\lambda) h = h$ follows by proving uniform convergence on compact sets $K \subseteq E$.
	
	If we choose for $\bQ$ the measure $\PR_{x}$ in the defining supremum of $R(\lambda)h(x)$, we obtain the upper bound
	\begin{equation*}
	R(\lambda) h (x) - h(x) \geq \int_0^\infty \int h(X(t)) - h(x) \PR_x(\dd X) \tau_\lambda(\dd t).
	\end{equation*}
	As the measures $\{\PR_x\}_{x \in K}$ are tight, we have control on the modulus of continuity of the trajectories $t \mapsto X(t)$. This implies that the right-hand side converges to $0$ as $\lambda \downarrow 0$ uniformly for $x \in K$.
	
	\smallskip
	
	We prove the second inequality. Fix $\varepsilon \in (0,4\vn{h})$, we prove that for $\lambda$ sufficiently small, we have $\sup_{x \in K}  R(\lambda) h (x) - h(x) \leq \varepsilon$. First of all, let $T(\lambda) := - \lambda \log \left(\frac{\varepsilon}{4 \vn{h}}\right)$ and let $\bQ_{x,\lambda}$ optimize $R(\lambda) h(x)$. We then have
	\begin{equation} \label{eqn:upper_bound_R_lambda_to_0}
	R(\lambda)h(x) - h(x) \leq \frac{1}{2}\varepsilon + \int_0^{T(\lambda)} \int h(X(t)) - h(x) \, \bQ_{x,\lambda}(\dd X)  \tau_\lambda(\dd t).
	\end{equation}
	Also note that as in Lemma \ref{lemma:strict_equicontinuity_of_resolvent} we have $\cS_\lambda(\bQ_{x,\lambda} \, | \, \PR_x) \leq 2\vn{h}$. This implies, using that $t \mapsto S_t$ is increasing in $t$, that
	\begin{equation}
	S_{T(\lambda)}(\bQ_{x,\lambda} \, | \, \PR_x) \leq 8 \vn{h}^2 \varepsilon^{-1}.
	\end{equation}
	Denote by $\widehat{\bQ}_{x,\lambda}$ the measures that equal $\bQ_{x,\lambda}$ up to time $T(\lambda)$ and satisfy $S_{T(\lambda)}(\widehat{\bQ}_{s,\lambda} \, | \, \PR_x) = S(\widehat{\bQ}_{s,\lambda} \, | \, \PR_x)$. 
	
	Now let $\lambda \leq \lambda^* := \left(\log 4\vn{h}\varepsilon^{-1} \right)^{-1}$. Then $T(\lambda) \leq 1$ and we obtain for all $s \geq 1$ that
	\begin{equation}
	S_{s}(\widehat{\bQ}_{x,\lambda} \, | \, \PR_x) \leq 8 \vn{h}^2 \varepsilon^{-1}.
	\end{equation}
	By Proposition \ref{proposition:equi_coercivity_relative_entropy}, the measures $\{\widehat{\bQ}_{x,\lambda}\}_{0< \lambda \leq \lambda^*,x \in K}$ form a tight family. Replacing $\bQ_{x,\lambda}$ by $\widehat{\bQ}_{x,\lambda}$ in \eqref{eqn:upper_bound_R_lambda_to_0}, using the tightness of the family of measures, the upper bound follows as for the lower bound.
\end{proof}

\begin{lemma} \label{lemma:continuity_semigroup}
	For each $h \in C_b(E)$ we have that $t \mapsto V(t)h$ is continuous for the strict topology.
\end{lemma}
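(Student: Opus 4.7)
The plan is to derive strict continuity of $t \mapsto V(t)h$ from the semigroup structure of $V$. First, one checks that $V(t)h \in C_b(E)$ for every $t \geq 0$: by Proposition \ref{proposition:resolvents_approximate_semigroup}, $V(t)h$ is the strict limit of $R(t/m)^m h$, and each $R(t/m)^m h$ lies in $C_b(E)$ by iterating Proposition \ref{proposition:continuity_of_resolvent}; strict (i.e.\ buc) limits preserve continuity and boundedness. The Markov property of $\{\PR_x\}_{x \in E}$ combined with the tower property yields the semigroup identity $V(t+s)h = V(t)V(s)h$.

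The core technical step is strict right-continuity at $s = 0$: $V(s)h \to h$ as $s \downarrow 0$. Boundedness is immediate from $\vn{V(s)h} \leq \vn{h}$. For uniform convergence on a compact $K \subseteq E$, fix a metric $d$ on $E$ and estimate
\begin{equation*}
|V(s)h(x) - h(x)| \leq e^{\vn{h}} \bE_x\bigl[|e^{h(X(s)) - h(x)} - 1|\bigr],
\end{equation*}
splitting the expectation according to whether $d(X(s), x)$ exceeds a small threshold $\eta$. Uniform continuity of $h$ on a slightly enlarged compact set handles the small case; the large case reduces to proving $\sup_{x \in K} \PR_x(d(X(s), x) > \eta) \to 0$ as $s \downarrow 0$. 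This is the main obstacle, and I would argue by contradiction via the Skorokhod representation theorem: if uniformity failed along $s_n \downarrow 0$ witnessed by $x_n \in K$, extract a subsequence with $x_n \to x^* \in K$, use continuity of $x \mapsto \PR_x$ to obtain $\PR_{x_n} \to \PR_{x^*}$ weakly on $\cP$, and couple so that $Y_n \sim \PR_{x_n}$ converges almost surely in $D_E(\bR^+)$ to $Y \sim \PR_{x^*}$. Skorokhod convergence together with right-continuity of $Y$ at $0$ gives $Y_n(s_n) \to Y(0) = x^*$ almost surely, contradicting the assumption.

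Next, I would establish sequential strict equicontinuity of the family $\{V(u)\}_{u \in [0, T]}$ in its argument: if $f_n \to f$ strictly then $V(u)f_n \to V(u)f$ strictly, uniformly in $u \in [0, T]$. The key ingredient is uniform tightness of the marginal laws $\{\PR_x \circ X(u)^{-1} : x \in K,\; u \in [0, T]\}$, which holds because any relatively compact subset of $D_E(\bR^+)$ has its range over $[0, T]$ contained in a compact subset of $E$ (by the characterization of compactness in Skorokhod space and the same coupling argument as above). On this compact, $f_n \to f$ uniformly, and the same elementary estimate as before transfers this to uniform convergence of $V(u)f_n$ on $K$.

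Combining the ingredients gives strict continuity everywhere. For right-continuity at $t > 0$, write $V(t+s)h = V(s)V(t)h$ and apply the core step to $g := V(t)h \in C_b(E)$. For left-continuity at $t > 0$, write $V(t)h - V(t-s)h = V(t-s)V(s)h - V(t-s)h$; since $V(s)h \to h$ strictly by the core step, the equicontinuity just established (applied uniformly over $u = t-s \in [0,t]$) drives this difference to $0$ strictly. The main obstacle throughout is the Skorokhod-coupling uniformity in the core step.
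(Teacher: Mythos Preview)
Your argument has a circularity: you invoke Proposition \ref{proposition:resolvents_approximate_semigroup} to show $V(t)h \in C_b(E)$, but in the paper's logical order that proposition is proved via Lemma \ref{lemma:T_weak_to_strict_continuity}, whose proof \emph{uses} Lemma \ref{lemma:continuity_semigroup}. The fix is easy, though: argue directly that $x \mapsto \bE_x[e^{h(X(t))}]$ is continuous from weak continuity of $x \mapsto \PR_x$, boundedness of $X \mapsto e^{h(X(t))}$, and the fact that solutions of a well-posed martingale problem have no fixed times of discontinuity (so the evaluation map is $\PR_x$-a.s.\ continuous). Once that is repaired, the remainder of your proof---right-continuity at $0$ via Skorokhod coupling, uniform tightness of the time-$u$ marginals over $x \in K$ and $u \in [0,T]$ to get equicontinuity, and the semigroup decomposition for left-continuity---is correct. (Minor point: your constant $e^{\vn{h}}$ in the elementary estimate should be $e^{2\vn{h}}$, since the mean-value bound for $\log$ needs the lower bound $e^{-2\vn{h}}$ on $\bE_x[e^{h(X(s))-h(x)}]$.)

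The paper takes a much shorter route: it observes $V(t)h = \log S(t)e^{h}$ where $S(t)$ is the \emph{linear} transition semigroup, cites an external result (Theorem 3.1 of \cite{Kr19d}) for strict continuity of $t \mapsto S(t)g$, and then transfers continuity through the logarithm using that $S(t)e^{h}$ is bounded away from $0$. Your approach is self-contained and makes the mechanism explicit---the Skorokhod coupling is exactly what underlies the cited result---at the cost of considerably more work. The paper's approach buys brevity by outsourcing the analysis to the linear theory; yours buys independence from that reference and would generalize more readily to settings where no linear semigroup is available.
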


\begin{proof}
	The map $t \mapsto S(t)e^f$ is strictly continuous by Theorem 3.1 of \cite{Kr19d} and bounded away from $0$. Thus a straightforward verification shows that also $V(t)f = \log S(t)e^f$ is strictly continuous.

	%
	%
	%
	
\end{proof}

\subsection{Measurability of the optimal measure}

In Section \ref{section:proofs_main_results} below, we will apply the resolvent to the resolvent. This means we have to perform an optimization procedure twice. In particular, this implies we have to integrate over the outcome of the first supremum. To treat this procedure effectively, we need measurability of the optimizing measure.

\begin{lemma} \label{lemma:measurable_selection_in_concatenation}
	Let $h \in C_b(E)$ and $\lambda > 0$. There exists a measurable map $x \mapsto \bQ_{x}$ such that $\bQ_x \in \cP$ and
	\begin{equation*}
	R(\lambda)h(x) = \int_0^\infty \left[ \int h(Y(t)) \, \bQ_x(\dd Y) - S_t(\bQ_x \, | \, \PR_x ) \right] \tau_\lambda(\dd t).
	\end{equation*}
\end{lemma}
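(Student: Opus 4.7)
The plan is a standard measurable-selection argument via the Kuratowski--Ryll-Nardzewski theorem. Exactly as in the proof of Proposition~\ref{proposition:continuity_of_resolvent}, since $h$ is bounded, the supremum defining $R(\lambda)h(x)$ is attained on the set
\begin{equation*}
\Gamma_x := \left\{ \bQ \in \cP \, \middle| \, \cS_\lambda(\bQ \, | \, \PR_x) \leq 2\vn{h} \right\},
\end{equation*}
which is compact by Lemma~\ref{lemma:compact_level_sets_integrated_relative_entropy}. Writing $F(x,\bQ) := \int_0^\infty \int h(Y(t)) \, \bQ(\dd Y) \, \tau_\lambda(\dd t) - \cS_\lambda(\bQ \, | \, \PR_x)$, I would study the argmax correspondence
\begin{equation*}
\Psi(x) := \left\{ \bQ \in \Gamma_x \, \middle| \, F(x,\bQ) = R(\lambda)h(x) \right\},
\end{equation*}
which has nonempty compact values because $F(x,\cdot)$ is upper semi-continuous on the compact set $\Gamma_x$.

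The heart of the argument is showing that $F$ is jointly upper semi-continuous on $E \times \cP$, equivalently that $(x,\bQ) \mapsto \cS_\lambda(\bQ \, | \, \PR_x)$ is jointly lower semi-continuous. Given $(x_n,\bQ_n) \to (x,\bQ)$, Condition~\ref{condition:Markov_single} gives $\PR_{x_n} \to \PR_x$ weakly, so Proposition~\ref{proposition:Gamma_convergence_relative_entropy} yields $\liminf_n S_t(\bQ_n \, | \, \PR_{x_n}) \geq S_t(\bQ \, | \, \PR_x)$ for every $t$. Since $S_t \geq 0$, Fatou's lemma gives
\begin{equation*}
\liminf_n \cS_\lambda(\bQ_n \, | \, \PR_{x_n}) \geq \int_0^\infty \liminf_n S_t(\bQ_n \, | \, \PR_{x_n}) \, \tau_\lambda(\dd t) \geq \cS_\lambda(\bQ \, | \, \PR_x).
\end{equation*}
The same inequality shows that $\Gamma$ itself has closed graph.

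With joint upper semi-continuity of $F$, continuity of $x \mapsto R(\lambda)h(x)$ from Proposition~\ref{proposition:continuity_of_resolvent}, and closed graph of $\Gamma$, one verifies that $\Psi$ has closed graph in $E \times \cP$: if $(x_n,\bQ_n) \to (x,\bQ)$ with $\bQ_n \in \Psi(x_n)$, then $F(x_n,\bQ_n) = R(\lambda)h(x_n) \to R(\lambda)h(x)$ while $\limsup_n F(x_n,\bQ_n) \leq F(x,\bQ) \leq R(\lambda)h(x)$, forcing equality. Since $\Psi$ is compact-valued with closed graph it is upper hemi-continuous, hence a Borel measurable correspondence, and the Kuratowski--Ryll-Nardzewski selection theorem produces a Borel measurable map $x \mapsto \bQ_x \in \Psi(x)$, which is the desired selector.

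The main obstacle is the joint lower semi-continuity of $\cS_\lambda$, since Proposition~\ref{proposition:Gamma_convergence_relative_entropy} is phrased for fixed-time marginals only; lifting it to the exponentially averaged functional requires combining Condition~\ref{condition:Markov_single} with Fatou in $t$. Everything else is routine correspondence theory.
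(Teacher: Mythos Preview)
Your proposal is correct and follows essentially the same route as the paper: identify the argmax correspondence, use joint upper semi-continuity of $F$ (equivalently joint lower semi-continuity of $\cS_\lambda$, which the paper already has as Lemma~\ref{lemma:compact_level_sets_integrated_relative_entropy}) together with continuity of $R(\lambda)h$ from Proposition~\ref{proposition:continuity_of_resolvent} to see that the graph is measurable, and then invoke a measurable selection theorem. The only cosmetic difference is that the paper cites Bogachev's Theorem~6.9.6 (stated as Theorem~\ref{theorem:measurable_selection}) rather than Kuratowski--Ryll-Nardzewski, and observes directly that the graph is the coincidence set of two measurable functions rather than passing through upper hemi-continuity.
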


We base the proof of this result on a measurable-selection theorem. We state it for completeness.

\begin{theorem}[Theorem 6.9.6 in \cite{Bo07}] \label{theorem:measurable_selection}
	Let $X,Y$ be Polish spaces and let $\Gamma$ be a measurable subset of $X \times Y$. Suppose that the set $\Gamma_x := \left\{y \, \middle| \, (x,y) \in \Gamma  \right\}$ is non-empty and $\sigma$-compact for all $x \in X$. Then $\Gamma$ contains the graph of a Borel measurable mapping $f : X \rightarrow Y$.
\end{theorem}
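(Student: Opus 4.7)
The statement is a form of the Arsenin--Kunugui selection theorem: a Borel subset of a product of Polish spaces whose vertical fibers are $\sigma$-compact admits a Borel section. My plan is to decompose each fiber into compact pieces in a measurably controlled way, then construct the selector by a classical shrinking-basic-set scheme, with Borel measurability of each step provided by the projection theorem for Borel sets with compact fibers.

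I would first embed $Y$ as a $G_\delta$ subset of a compact Polish space $\widehat Y$ (Hilbert cube) with compatible metric $\rho$, writing $Y = \bigcap_j O_j$ with $O_j \subseteq \widehat Y$ open. Then $\Gamma$ is Borel in $X \times \widehat Y$, and I decompose
\[
\Gamma = \bigcup_{k \geq 1} \Gamma^{(k)}, \qquad \Gamma^{(k)} := \bigl\{(x,y) \in \Gamma \;:\; \rho(y,\widehat Y \setminus O_j) \geq 1/k \text{ for all } j \leq k\bigr\}.
\]
Each $\Gamma^{(k)}$ is Borel, and its fibers are closed in the fixed compact set $C_k := \{y \in \widehat Y : \rho(y, \widehat Y \setminus O_j) \geq 1/k \ \forall j \leq k\}$, hence compact. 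Every compact subset of $Y$ is contained in some $C_k$, so by the $\sigma$-compact hypothesis each $\Gamma_x$ satisfies $\Gamma^{(k)}_x \neq \emptyset$ for some $k$.

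Next I invoke the projection theorem for Borel sets with compact fibers (Kunugui's theorem): if $B$ is Borel in $X \times \widehat Y$ with compact fibers, then $\pi_X(B)$ is Borel. Applying it slicewise, let $m(x)$ be the smallest $k$ with $\Gamma^{(k)}_x \neq \emptyset$; Borel measurability of $m$ follows because $\{x : \Gamma^{(k)}_x \neq \emptyset\} = \pi_X(\Gamma^{(k)})$ is Borel. This commits $x$ to a single compact piece $\Gamma^{(m(x))}_x$ of the fiber. Fixing a countable base $\{V_n\}$ of $\widehat Y$ closed under finite intersection and containing sets of arbitrarily small $\rho$-diameter, I then inductively pick $n_{k+1}(x)$ as the smallest index with $\mathrm{diam}\, V_{n_{k+1}} < 2^{-(k+1)}$ that refines the previous stage and satisfies $V_{n_{k+1}} \cap \Gamma^{(m(x))}_x \neq \emptyset$. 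Each Borel measurability claim, after conditioning on the Borel level sets $\{m = j\}$, reduces to another instance of the compact-fiber projection theorem applied to $\Gamma^{(j)} \cap (X \times V_{n_{k+1}})$. Define $f(x)$ as the unique point of $\bigcap_k \overline{V_{n_0(x)} \cap \cdots \cap V_{n_k(x)}}$: the intersection is non-empty by nested compactness of $\Gamma^{(m(x))}_x \cap V_{n_0(x)} \cap \cdots \cap V_{n_k(x)}$, it is a singleton by shrinking diameters, and its element lies in $\Gamma^{(m(x))}_x \subseteq \Gamma_x$.

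I expect the main obstacle to be the compact-fiber projection theorem itself, which is genuinely nontrivial: the projection of a Borel set is only analytic in general, and the compact-fiber hypothesis is essential. Its proof relies on the Lusin--Suslin theorem (injective Borel images are Borel), together with a uniformization argument that upgrades compact fibers to singleton fibers via careful selection of a minimal element with respect to a Borel ordering. A secondary subtlety is ensuring that the decomposition $\Gamma = \bigcup_k \Gamma^{(k)}$ has both Borel structure and genuinely compact (not merely closed) fibers; the $G_\delta$ representation above does this canonically, but any Borel parameterization of the $\sigma$-compact structure would serve.
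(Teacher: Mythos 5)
The paper does not prove this statement; it is cited verbatim as Theorem 6.9.6 in Bogachev's \emph{Measure Theory} and used as a black box (the text explicitly says ``We state it for completeness''). So there is no paper proof to compare against, and I will instead assess your argument on its own terms.

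Your high-level strategy is the right one for the Arsenin--Kunugui theorem (decompose into compact-fiber pieces, use the compact-fiber projection theorem to get Borelness of each selection step, build the selector by a shrinking-diameter scheme), and you are right that the compact-fiber projection theorem is the deep ingredient. However, there is a genuine gap at the very first step: the decomposition $\Gamma = \bigcup_k \Gamma^{(k)}$ does \emph{not} have compact fibers. You have $\Gamma^{(k)}_x = \Gamma_x \cap C_k$, and the only structure you know on $\Gamma_x$ is that it is $\sigma$-compact (equivalently, $K_\sigma$) in $Y$. Intersecting a $K_\sigma$ set with a compact set $C_k$ gives a $K_\sigma$ set bounded inside $C_k$, but not a closed subset of $C_k$. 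Concretely, take $Y=(0,1)$, $\widehat{Y}=[0,1]$, and $\Gamma_x = \{\tfrac12 - \tfrac1n : n\ge 3\}$; this is countable hence $K_\sigma$, but $\Gamma_x \cap C_k$ accumulates at $\tfrac12 \in C_k \setminus \Gamma_x$ for large $k$, so it is not compact. Consequently, the compact-fiber projection theorem does not apply to $\Gamma^{(k)}$ (a Borel set whose fibers are merely Borel subsets of a compact set has only an analytic projection in general), and the Borel measurability of $m(x)$ and of the subsequent choices $n_{k+1}(x)$ is unjustified.

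What you actually need in place of the ambient $G_\delta$ decomposition is the nontrivial structure half of Arsenin--Kunugui (often attributed to Saint-Raymond): a Borel set $\Gamma\subseteq X\times Y$ all of whose sections are $K_\sigma$ can be written as $\Gamma=\bigcup_n \Gamma_n$ with each $\Gamma_n$ Borel and having \emph{compact} sections. That theorem is roughly of the same depth as the selection theorem itself; it cannot be obtained by a fixed compactification of $Y$ because the compact exhaustions of the sections $\Gamma_x$ vary with $x$ and must be chosen in a Borel-uniform way. Once you have that decomposition, the rest of your scheme --- minimal index $m(x)$, refinement through a countable base, nested-compactness to extract the selector --- goes through as you describe. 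So the proof is salvageable, but the key missing ingredient is precisely the Borel parameterization of the $\sigma$-compact structure of the sections, and quoting the $G_\delta$ embedding is not a substitute for it.
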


We will apply this result below by using the following argument. Let $f,g$ be measurable maps $f,g : X \times Y \rightarrow (-\infty,\infty]$. The set $\{(x,y) \, | \, f(x,y) = g(x,y)\}$ is measurable as it equals $\{ (x,y) \, | \, f(x) - g(x) = 0\}$ which is the inverse image of $\{0\}$ and hence measurable.

\begin{proof} [Proof of Lemma \ref{lemma:measurable_selection_in_concatenation}]
	We aim to apply Theorem \ref{theorem:measurable_selection}. Thus, we have to establish that the set $\Gamma \subseteq E \times \cP$ defined by
	\begin{equation*}
	\Gamma := \left\{(x, \bQ)  \, \middle| \,  R(\lambda)h(x) = \int_0^\infty \left[\int h(Y(t)) \bQ(\dd Y) - S_t(\bQ \, | \, \PR_x) \right] \tau_{\lambda}(\dd t) \right\}
	\end{equation*}
	is measurable and that $\Gamma_x := \left\{\bQ \, \middle| \, (x,\bQ) \in \Gamma \right\}$ is non-empty and $\sigma$-compact. 
	
	\smallskip

	Similarly as in the proof of Proposition \ref{proposition:continuity_of_resolvent}, we find that $\Gamma_x$ is compact and non-empty. We also saw in that proof that the map $(x,\bQ) \mapsto \int h(Y(t)) \bQ(\dd Y) - S(\bQ \, | \, \PR_x ) \, \tau_{\lambda}(\dd t)$ is upper semi-continuous. As $x \mapsto R(\lambda)h(x)$ is continuous by Proposition \ref{proposition:continuity_of_resolvent} we see that the set $\Gamma$ is the set of points where two measurable functions agree implying that $\Gamma$ is measurable.  An application of Theorem \ref{theorem:measurable_selection} concludes the proof.	
\end{proof}

\section{Proofs of the main results} \label{section:proofs_main_results}

In this section, we prove the two main results: Theorem \ref{theorem:Markov_solving_HJ} and Proposition \ref{proposition:resolvents_approximate_semigroup}. We argued in Section \ref{section:discussion_proofs} that the first result follows by establishing that $R(\lambda)$ is a classical left-inverse of $(\bONE - \lambda H)$ and that the family $R(\lambda)$ is a pseudo-resolvent. We establish these two properties in Sections \ref{subsection:Markov_invert} and \ref{subsection:Markov_pseudoresolvent}. The proof of Proposition \ref{proposition:resolvents_approximate_semigroup} is carried out in Section \ref{subsection_resolvent_to_semigroup}.

\subsection{$R(\lambda)$ is a classical left-inverse of $\bONE- \lambda H$} \label{subsection:Markov_invert}

The proof that $R(\lambda)$ is a classical left-inverse of $\bONE- \lambda H$ is based on a well known integration by parts formula for the exponential distribution for bounded measurable functions $z$ on $\bR^+$ we have 
\begin{equation} \label{eqn:integral_transform}
\lambda \int_0^\infty  z(t) \, \tau_\lambda( \dd t) = \int_0^\infty \int_0^t z(s) \, \dd s \, \tau_\lambda(\dd t).
\end{equation}

A generalization is given by the following lemma.

\begin{lemma} \label{lemma:integral_transform_complex_measures}
	Fix $\lambda > 0$ and $\bQ \in \cP(D_E(\bR^+))$. Let $z$ be a measurable function on $E$. Then we have
	\begin{equation*}
	\lambda \int_0^\infty \int  z(X(t)) \, \bQ(\dd X)  \, \tau_\lambda(\dd t) = \int_0^\infty \int \int_0^t z(X(s)) \, \dd s \, \bQ(\dd X) \tau_\lambda(\dd t).
	\end{equation*}
\end{lemma}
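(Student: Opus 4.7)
The plan is to reduce the statement to the scalar integration by parts formula \eqref{eqn:integral_transform} cited immediately before the lemma, by conditioning on the path $X$ and applying Fubini twice. Intuitively, for each fixed path $X \in D_E(\bR^+)$, the map $t \mapsto z(X(t))$ is a measurable (and under the standing implicit boundedness hypothesis, bounded) function on $\bR^+$, so the identity already holds pointwise in $X$.

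Concretely, I would proceed as follows. First, I would observe that the joint map $(t, X) \mapsto z(X(t))$ on $\bR^+ \times D_E(\bR^+)$ is measurable: since $X \mapsto X(t)$ is measurable on the Skorokhod space for each $t$ and $t \mapsto X(t)$ is measurable for each $X$ (being cadlag), a standard monotone class argument yields joint measurability. Boundedness of $z$ (inherited from the setting $C_b(E)$ in which this lemma will be used) guarantees that all integrals are finite and Fubini applies.

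Next, I would apply Fubini to the left-hand side to write
\begin{equation*}
\lambda \int_0^\infty \int z(X(t))\, \bQ(\dd X)\, \tau_\lambda(\dd t) = \int \left( \lambda \int_0^\infty z(X(t))\, \tau_\lambda(\dd t) \right) \bQ(\dd X).
\end{equation*}
For each fixed $X$, the scalar identity \eqref{eqn:integral_transform} applied to $w(t) := z(X(t))$ gives
\begin{equation*}
\lambda \int_0^\infty z(X(t))\, \tau_\lambda(\dd t) = \int_0^\infty \int_0^t z(X(s))\, \dd s\, \tau_\lambda(\dd t).
\end{equation*}
Substituting and applying Fubini a second time yields the right-hand side.

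There is essentially no obstacle here: the only point to verify with care is joint measurability of $(t, s, X) \mapsto z(X(s))\bONE_{\{s \le t\}}$ so that Fubini is justified, which is routine. The scalar identity itself follows in one line by swapping the order of integration in $\int_0^\infty \int_0^t z(s)\, \dd s\, \tau_\lambda(\dd t)$ and computing $\int_s^\infty \lambda^{-1} e^{-\lambda^{-1} t}\, \dd t = e^{-\lambda^{-1} s}$.
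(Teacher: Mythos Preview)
Your proposal is correct and matches the paper's approach: the paper states this lemma without proof, presenting it simply as a generalization of the scalar identity \eqref{eqn:integral_transform}, and your reduction via Fubini to the scalar case is exactly the intended argument.
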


%
%
%
%
%
%

The lemma allows us to rewrite the application of $R(\lambda)$ to $f - \lambda g$ in integral form. The integral that comes out can be analyzed using the definition of $H$ in terms of exponential martingales. This leads to the desired result.

\begin{proposition} \label{proposition:Resolvent_on_Hamiltonian_Markov}
	Let $\PR_x$ be a collection of Markov measures as in Condition \ref{condition:Markov_single}. For all $\lambda > 0$, $x \in E$ and $(f,g) \in H$, we have $R(\lambda)(f - \lambda g)(x) = f(x)$.
\end{proposition}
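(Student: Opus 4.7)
The plan is to prove matching upper and lower bounds for $R(\lambda)(f-\lambda g)(x)$, both equal to $f(x)$, by combining the Donsker--Varadhan variational characterization of relative entropy (Lemma~\ref{lemma:DV_variational}) with the integration-by-parts identity of Lemma~\ref{lemma:integral_transform_complex_measures} and the exponential-martingale definition of $H$.

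For the \emph{upper bound}, fix $(f,g)\in H$ and set $Z(t):=f(X(t))-f(X(0))-\int_0^t g(X(s))\,\mathrm{d}s$, so that $M(t):=e^{Z(t)}$ is a $\PR_x$-martingale with $M(0)=1$. At each fixed $t$, the Donsker--Varadhan formula applied to the $\cF_t$-measurable function $Z(t)$ gives, for any $\bQ\in\cP$,
\begin{equation*}
\int Z(t)\,\mathrm{d}\bQ - S_t(\bQ\,|\,\PR_x) \;\leq\; \log\int e^{Z(t)}\,\mathrm{d}\PR_x \;=\; \log\bE_x[M(t)] \;=\; 0.
\end{equation*}
If $\bQ$ does not start from $x$ the inequality is trivial because then $S_t(\bQ\,|\,\PR_x)=\infty$; otherwise $\int f(X(0))\,\mathrm{d}\bQ=f(x)$. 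Integrating the inequality against $\tau_\lambda$ and using Lemma~\ref{lemma:integral_transform_complex_measures} to rewrite $\int_0^\infty\!\int\!\int_0^t g(X(s))\,\mathrm{d}s\,\mathrm{d}\bQ\,\tau_\lambda(\mathrm{d}t)=\lambda\int_0^\infty\!\int g(X(t))\,\mathrm{d}\bQ\,\tau_\lambda(\mathrm{d}t)$, one obtains
\begin{equation*}
\int_0^\infty\!\!\int(f(X(t))-\lambda g(X(t)))\,\mathrm{d}\bQ\,\tau_\lambda(\mathrm{d}t) - \int_0^\infty S_t(\bQ\,|\,\PR_x)\,\tau_\lambda(\mathrm{d}t) \;\leq\; f(x),
\end{equation*}
and taking the supremum over $\bQ$ yields $R(\lambda)(f-\lambda g)(x)\leq f(x)$.

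For the \emph{lower bound}, choose $\bQ$ to be the Doob/Girsanov transform of $\PR_x$ by the martingale $M$: define $\bQ$ on each $\cF_t$ by $\mathrm{d}\bQ/\mathrm{d}\PR_x|_{\cF_t}=M(t)$. Consistency across $t$ is exactly the martingale property, so this family extends to a probability measure $\bQ\in\cP(D_E(\bR^+))$ by the standard Kolmogorov/Ionescu--Tulcea extension on the Skorokhod space (using that $M$ is bounded, since $f$ and $\int_0^t g\,\mathrm{d}s$ are bounded on any finite time horizon after noting $g\in C_b(E)$; this is what keeps the extension clean). Then by definition
\begin{equation*}
S_t(\bQ\,|\,\PR_x) \;=\; \int\log\tfrac{\mathrm{d}\bQ}{\mathrm{d}\PR_x}\Big|_{\cF_t}\mathrm{d}\bQ \;=\; \int Z(t)\,\mathrm{d}\bQ,
\end{equation*}
so the pointwise-in-$t$ inequality becomes equality: $\int Z(t)\,\mathrm{d}\bQ - S_t(\bQ\,|\,\PR_x)=0$. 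Repeating the same integration against $\tau_\lambda$ and the same application of Lemma~\ref{lemma:integral_transform_complex_measures} yields equality $R(\lambda)(f-\lambda g)(x)\geq f(x)$.

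The \emph{main obstacle} is the construction of the Doob-transformed $\bQ$ as a bona fide Borel probability measure on $D_E(\bR^+)$: one has to verify that the consistent family $\{\bQ|_{\cF_t}\}_{t\geq 0}$ really extends to a single measure on the Skorokhod space. Boundedness of $f$ and $g$ on compacts of time guarantees uniform integrability of $M$ on each $[0,T]$, so the Radon--Nikodym densities define consistent probability measures on cylinder $\sigma$-algebras, and the underlying well-posedness of the martingale problem for $A$ (Condition~\ref{condition:Markov_single}) gives the required tightness/regularity for the extension. Once $\bQ$ is in hand, the two inequalities match and the proposition follows.
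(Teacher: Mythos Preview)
Your proposal is correct and follows essentially the same approach as the paper: both arguments use the Donsker--Varadhan variational formula together with Lemma~\ref{lemma:integral_transform_complex_measures} for the upper bound, and the Girsanov/Doob tilt of $\PR_x$ by the exponential martingale for the lower bound. The only cosmetic difference is the order of operations---you apply Donsker--Varadhan pointwise in $t$ and then integrate against $\tau_\lambda$, whereas the paper first uses Lemma~\ref{lemma:integral_transform_complex_measures} to rewrite the resolvent and then pulls the supremum inside the $\tau_\lambda$-integral before applying Donsker--Varadhan; the paper also treats the extension of the consistent family $\{\bQ|_{\cF_t}\}$ with the same ``standard arguments'' brevity you do.
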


\begin{proof}
		Fix $\lambda > 0$, $x \in E$ and $(f,g) \in H$. We start by proving $R(\lambda)(f -\lambda g)(x) \leq f(x)$. Set $h = f - \lambda g$. By Lemma \ref{lemma:integral_transform_complex_measures} we have
		\begin{align*}
		R(\lambda)h (x) & = \sup_{\bQ \in \cP} \left\{ \int_0^\infty \tau_{\lambda}(\dd t) \left[ \int \left( f(X(t)) - \lambda g(X(t))  \right) \bQ(\dd X) - S_t(\bQ \, | \, \PR_x) \right] \right\} \\
		& = \sup_{\bQ \in \cP} \left\{ \int_0^\infty \tau_{\lambda}(\dd t) \left[ \int \left( f(X(t)) - \int_0^t g(X(s)) \dd s \right) \bQ(\dd X) - S_t(\bQ \, | \, \PR_x) \right] \right\}.
		\end{align*}
		By optimizing the integrand, we find by Lemma \ref{lemma:DV_variational}
		\begin{align*}
		& R(\lambda)(f - \lambda g)(x) \\
		& \leq \int_0^\infty \tau_\lambda(\dd t) \left\{ \sup_{\bQ \in \cP} \left[ \int \bQ(\dd X) \left( f(X(t)) - \int_0^t g(X(s)) \dd s\right) - S_t(\bQ \, | \, \PR_x) \right] \right\} \\
		& = \int_0^\infty \tau_\lambda(\dd t) \left\{ \log \bE\left[e^{f(X(t)) - \int_0^t g(X(s)) \dd s} \, \middle| \, X(0) = x \right]   \right\}.
		\end{align*} 
		As $(f,g) \in H$ we can reduce the inner expectation to time $0$ by using the martingale property. This yields
		\begin{equation*}
		R(\lambda)(f - \lambda g)(x) \leq \int \tau_\lambda(\dd t) \left\{ \log \bE\left[e^{f(X(0))} \, \middle| \, X(0) = x \right]   \right\} = f(x),
		\end{equation*}	
		establishing the first inequality.		
		
		We now prove the reverse inequality $R(\lambda)(f -\lambda g)(x) \geq f(x)$. To do so, we construct a measure $\bQ$ that achieves the supremum. For each time $t \geq 0$, define the measure $\bQ^t$ via the Radon-Nykodim derivative
	\begin{equation*}
	\frac{\dd \bQ^{t}}{\dd \PR} (X) = \exp\left\{f(X(t)) - f(X(0)) - \int_0^t g(X(s)) \dd s \right\}.
	\end{equation*}
	Note that as $t \mapsto \exp\left\{f(X(t)) - f(X(0)) - \int_0^t g(X(s)) \dd s \right\}$ is a $\PR_x$ martingale, we have for $s \leq t$ that $\bQ^t |_{\cF_s} = \bQ^s |_{\cF_s}$. Thus, standard arguments show that there is a measure $\bQ \in \cP$ such that $\bQ |_{\cF_t} = \bQ^t |_{\cF_t}$. Note that by construction, we have $\bQ(X(0) = x) = 1$. Using this measure $\bQ$, applying Lemma \ref{lemma:integral_transform_complex_measures}, we obtain
	\begin{align*}
	& R(\lambda)\left(f - \lambda H f\right)(x) \\
	& \geq \int_0^\infty \int  \left[\left( f(X(t)) - \int_0^t g(X(s)) \dd s \right) \right. \\
	& \hspace{4cm} \left. - \left(f(X(t)) - f(X(0)) - \int_0^t g(X(s)) \dd s\right)\right] \bQ(\dd X)  \tau_\lambda(\dd t) \\
	& = f(x)
	\end{align*}
	establishing the second inequality.
\end{proof}

\subsection{$R$ is a pseudo-resolvent} \label{subsection:Markov_pseudoresolvent}

The next step is the verification that the family of operators $R(\lambda)$ is a pseudo-resolvent. As in the previous section, this property is essentially an extension of a key property of the exponential distribution. We state it as a lemma that can be verified using basic calculus.

\begin{lemma} \label{lemma:double_exponential_integral_simplification}
	Let $z : \bR^+ \rightarrow \bR$ be a bounded and measurable function. Let $0 < \alpha < \beta$. Then
	\begin{equation*}
	\int_0^\infty z(s) \tau_\beta(\dd s) = \frac{\alpha}{\beta} \int_0^\infty z(s) \tau_\alpha(\dd s) + \left(1 - \frac{\alpha}{\beta}\right) \int_0^\infty \int_0^\infty z(s+u) \, \tau_\beta(\dd u) \, \tau_\alpha(\dd s).
	\end{equation*}
\end{lemma}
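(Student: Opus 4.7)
The plan is to prove the identity by a direct calculation at the level of densities, since both sides are linear continuous functionals of the bounded measurable function $z$, and everything is finite by boundedness. The key computational trick is a change of variable in the inner integral of the double integral on the right.

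Concretely, I would unfold the definition $\tau_\lambda(\dd t) = \lambda^{-1} e^{-\lambda^{-1} t} \dd t$ on both sides. The left-hand side then reads $\beta^{-1} \int_0^\infty z(s) e^{-\beta^{-1} s} \dd s$, and the first term on the right is $\beta^{-1} \int_0^\infty z(s) e^{-\alpha^{-1} s}\, \dd s$. For the double integral, I would substitute $t = s + u$ inside, apply Fubini to swap the order (so that $s$ now runs from $0$ to $t$ for fixed $t \in [0,\infty)$), and evaluate the inner integral $\int_0^t e^{(\beta^{-1} - \alpha^{-1}) s} \dd s = \frac{\alpha \beta}{\beta - \alpha}\bigl(1 - e^{-(\alpha^{-1} - \beta^{-1}) t}\bigr)$. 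The prefactor $(1 - \alpha/\beta) \beta^{-1} \alpha^{-1}$ cancels exactly against $\frac{\alpha\beta}{\beta - \alpha}$, leaving
\begin{equation*}
\beta^{-1} \int_0^\infty z(t) e^{-\beta^{-1} t} \dd t - \beta^{-1} \int_0^\infty z(t) e^{-\alpha^{-1} t} \dd t,
\end{equation*}
which is precisely LHS minus the first term of RHS. Rearranging gives the identity.

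A conceptually cleaner alternative is to observe that the identity says that $\tau_\beta$ equals the mixture $\tfrac{\alpha}{\beta} \tau_\alpha + (1 - \tfrac{\alpha}{\beta})(\tau_\alpha * \tau_\beta)$, and then verify this equality of probability measures via Laplace transforms: the Laplace transform of $\tau_\lambda$ at $p>0$ is $(1+\lambda p)^{-1}$, and a one-line algebraic manipulation shows $\tfrac{\alpha}{\beta}(1+\alpha p)^{-1} + (1 - \tfrac{\alpha}{\beta})(1+\alpha p)^{-1}(1+\beta p)^{-1} = (1+\beta p)^{-1}$. Uniqueness of the Laplace transform on $\cP(\bR^+)$ then gives the identity of measures, and integrating $z$ yields the statement.

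I expect no serious obstacle here: the result is essentially an algebraic identity about exponential densities and should follow from either route in a few lines. The only point requiring mild care is tracking the constants $\alpha^{-1} - \beta^{-1}$ and $\beta - \alpha$ so they cancel correctly, and (in the direct-calculation route) justifying the application of Fubini, which is trivial because $z$ is bounded and the double integrand is absolutely integrable.
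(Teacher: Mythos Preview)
Your proposal is correct; the paper does not actually write out a proof of this lemma, stating only that it ``can be verified using basic calculus,'' so your direct density computation (and the Laplace-transform alternative) is exactly the kind of verification the paper has in mind.
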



Lifting this property to the family $R(\lambda)$ yields the pseudo-resolvent property.

\begin{proposition} \label{proposition:pseudo_resolvent_Markov_entropy}
	For all $h \in C_b(E)$, $x \in E$, and $0 < \alpha < \beta$, we have
	\begin{equation} \label{eqn:pseudo_resolvent_Markov}
	R(\beta)h(x) = R(\alpha) \left(R(\beta)h - \alpha \frac{R(\beta)h - h}{\beta} \right)(x). 
	\end{equation}
\end{proposition}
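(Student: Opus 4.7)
The plan is to establish the identity $R(\beta)h(x) = R(\alpha)\tilde h(x)$ (where I write $f := R(\beta)h$ and $\tilde h := R(\beta)h - \alpha(R(\beta)h-h)/\beta = \tfrac{\alpha}{\beta}h + (1-\tfrac{\alpha}{\beta})f$) in the two directions separately, lifting Lemma \ref{lemma:double_exponential_integral_simplification} from a deterministic identity to the supremum defining $R(\lambda)$. For readability let $U(t, \bQ, y; k) := \int k(X(t))\,\bQ(\dd X) - S_t(\bQ | \PR_y)$. The two engines of the argument are Lemma \ref{lemma:double_exponential_integral_simplification} and the Markov decomposition of relative entropy
\begin{equation*}
S_{s+u}(\bQ|\PR_x) = S_s(\bQ|\PR_x) + \int S_u(\bQ^{X[0,s]} | \PR_{X(s)})\,\bQ(\dd X[0,s]),
\end{equation*}
where $\bQ^{X[0,s]}$ is a regular conditional law of the shifted process after time $s$ given the past $X[0,s]$; this follows from Proposition \ref{proposition:relative_entropy_decomposition} and the Markov property of $\{\PR_y\}_{y \in E}$.

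For the direction $R(\beta)h(x) \leq R(\alpha)\tilde h(x)$, I fix $\varepsilon > 0$, pick $\bQ \in \cP$ that $\varepsilon$-optimizes $R(\beta)h(x)$, and apply Lemma \ref{lemma:double_exponential_integral_simplification} to $z(t) = U(t, \bQ, x; h)$. The $\tau_\alpha$ piece directly contributes $\tfrac{\alpha}{\beta}\int U(s, \bQ, x; h)\,\tau_\alpha(\dd s)$; for the $\tau_\beta\tau_\alpha$ double integral, the entropy decomposition rewrites the inner $\tau_\beta$-integral (at fixed $s$) as $\int \bigl[\int U(u, \bQ^{X[0,s]}, X(s); h)\,\tau_\beta(\dd u)\bigr]\bQ(\dd X[0,s]) - S_s(\bQ|\PR_x)$, which is bounded above by $\int f(X(s))\,\bQ(\dd X) - S_s(\bQ|\PR_x) = U(s, \bQ, x; f)$ by the very definition of $R(\beta)h(X(s))$. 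Recombining yields $R(\beta)h(x) - \varepsilon \leq \int U(s, \bQ, x; \tilde h)\,\tau_\alpha(\dd s) \leq R(\alpha)\tilde h(x)$.

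The reverse inequality is the delicate step. I fix $\varepsilon > 0$, pick $\bQ' \in \cP$ that $\varepsilon$-optimizes $R(\alpha)\tilde h(x)$, and use Lemma \ref{lemma:measurable_selection_in_concatenation} to obtain a measurable family $\{\bQ_y\}_{y\in E}$ of $\varepsilon$-optimizers of $R(\beta)h(y) = f(y)$. For each \emph{deterministic} $T \geq 0$ I construct the spliced measure $\bQ^{(T)} \in \cP$ that agrees with $\bQ'$ on $\cF_T$ and whose regular conditional law of the shifted process after $T$, given $X[0,T]$, equals $\bQ_{X(T)}$. Splitting $\int U(t, \bQ^{(T)}, x; h)\,\tau_\beta(\dd t)$ at time $T$, using the entropy decomposition at $s = T$, the identity $\int_T^\infty \tau_\beta(\dd t) = e^{-T/\beta}$, and the near-optimality of $\bQ_{X(T)}$ gives
\begin{equation*}
R(\beta)h(x) \geq \int U(t, \bQ^{(T)}, x; h)\,\tau_\beta(\dd t) \geq \int_0^T U(t, \bQ', x; h)\,\tau_\beta(\dd t) + e^{-T/\beta}\, U(T, \bQ', x; f) - \varepsilon.
\end{equation*}

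The main obstacle is that no single value of $T$ (equivalently, no single $\bQ^{(T)}$) realizes the $\tilde h$-weights $\alpha/\beta$ and $1-\alpha/\beta$ simultaneously. The remedy is to average the above inequality against the auxiliary exponential distribution $\tau_\delta$ with $\delta := \alpha\beta/(\beta-\alpha)$, chosen so that $1/\delta + 1/\beta = 1/\alpha$. Two one-line Fubini identities, $\int \tau_\delta(\dd T)\int_0^T g(t)\,\tau_\beta(\dd t) = \tfrac{\alpha}{\beta}\int g\,\tau_\alpha$ and $\int e^{-T/\beta} g(T)\,\tau_\delta(\dd T) = (1-\tfrac{\alpha}{\beta})\int g\,\tau_\alpha$, will then transform the averaged inequality into $R(\beta)h(x) \geq \int U(s, \bQ', x; \tilde h)\,\tau_\alpha(\dd s) - O(\varepsilon) \geq R(\alpha)\tilde h(x) - O(\varepsilon)$; sending $\varepsilon \downarrow 0$ closes the argument. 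Conceptually, the choice $\delta = \alpha\beta/(\beta - \alpha)$ is the measure-theoretic shadow of Lemma \ref{lemma:double_exponential_integral_simplification}: if independently $T_1 \sim \tau_\beta$ and $T_2 \sim \tau_\delta$, then $T_1 \wedge T_2 \sim \tau_\alpha$ with $\PR(T_1 < T_2) = \alpha/\beta$, reproducing precisely the two-piece split in Lemma \ref{lemma:double_exponential_integral_simplification}.
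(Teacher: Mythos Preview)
Your proposal is correct and follows essentially the same strategy as the paper: the decomposition argument for $R(\beta)h \leq R(\alpha)\tilde h$ matches Section \ref{section:subsub_decomposition}, and the splicing-then-averaging argument for $R(\beta)h \geq R(\alpha)\tilde h$ matches Section \ref{section:subsub_concationation}, where the paper's auxiliary density $\frac{\beta-\alpha}{\alpha\beta}e^{(\beta^{-1}-\alpha^{-1})s}$ is precisely your $\tau_\delta$ with $\delta = \alpha\beta/(\beta-\alpha)$. Your two Fubini identities and the minimum-of-exponentials interpretation are a clean repackaging of the paper's explicit computations, and your use of Lemma \ref{lemma:measurable_selection_in_concatenation} and the entropy decomposition coincides with the paper's Lemma \ref{lemma:decomposition_of_entropy_in_proof_concatenation}.
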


Note that the right-hand side of \eqref{eqn:pseudo_resolvent_Markov} can be rewritten as
\begin{equation} \label{eqn:explicit_double_resolvent}
\begin{aligned}
& R(\alpha) \left(R(\beta)h - \alpha \frac{R(\beta)h - h}{\beta} \right)(x) \\
& = \sup_{\bQ \in \cP} \left\{\int_0^\infty \int \left(1-\frac{\alpha}{\beta}\right)R(\beta)h(X(t)) - \frac{\alpha}{\beta}h(X(t)) \, \bQ(\dd X) \,  -S_t(\bQ \, | \, \PR_{x} )  \tau_\alpha(\dd t) \right\} \\
&= \sup_{\bQ \in \cP}\left\{\int  \left(1-\frac{\alpha}{\beta}\right) \sup_{\bQ^{t} \in \cP} \left[\int_0^\infty \int  h(Y(s)) \bQ^{t}(\dd Y) - S_s(\bQ_{2,t} \, | \, \PR_{X(t)} ) \tau_\beta(\dd s)\right]  \bQ(\dd X) \, \tau_\alpha(\dd t) \right. \\
& \left. \hspace{5cm} + \int_0^\infty  \int \frac{\alpha}{\beta}h(X(t)) \, \bQ(\dd X)  -S_t(\bQ \, | \, \PR_{x} ) \tau_\alpha(\dd t) \right\}. 
\end{aligned}
\end{equation}
To establish \eqref{eqn:pseudo_resolvent_Markov} we establish two inequalities. To do so, we will consider two techniques. First, to prove that the right-hand side is dominated by the left-hand side, we need to concatenate optimizers. To establish the other inequality, we will take an optimizer for $R(\beta)h$ and make a time-dependent splitting, so that we can dominate the first part in the first optimization, and the second by the second optimization in \eqref{eqn:explicit_double_resolvent}. The proof of Proposition \ref{proposition:pseudo_resolvent_Markov_entropy} will be carried out in the next two sections. Both proofs are inspired by the proof of Lemma 8.20 of \cite{FK06} where the pseudo-resolvent property is established for the deterministic case. 

\smallskip

\subsubsection{Concatenating measures} \label{section:subsub_concationation}

In this section, we will prove that 
\begin{equation} \label{eqn:pseudo_resolvent_Markov_concatenate}
R(\beta)h \geq R(\alpha) \left(R(\beta)h - \alpha \frac{R(\beta)h - h}{\beta} \right).
\end{equation}
We start by introducing the procedure of concatenating measures. For $s \geq 0$ and $X,Y \in D_E(\bR^+)$ such that $X(s) = Y(0)$, define the concatenation $\kappa^s_{X,Y} \in D_E(\bR^+)$ by
\begin{equation*}
\kappa^s_{X,Y}(t) = \begin{cases}
X(t) & \text{if } t \leq s, \\
Y(t-s) & \text{if } t \geq s.
\end{cases}
\end{equation*}

For $\bQ \in \cP(D_E(\bR^+))$ and map $q: D_E(\bR^+) \rightarrow \cP(D_E(\bR^+))$ with $q(X) = \bQ^{X}$ that is $\cF_s$ measurable and supported on a set such that $Y(0) = X(s)$ define the measure
\begin{equation} \label{eqn:concatenation_measure}
\bQ \odot_s q (\dd Z) = \int \int \bQ(\dd X) \bQ^{X}(\dd Y) \delta_{\kappa^s_{X,Y}}(\dd Z)
\end{equation}

Before starting with the proof of \eqref{eqn:pseudo_resolvent_Markov_concatenate}, we start with the computation of the relative entropy of $\bQ \odot_s q$.

\begin{lemma} \label{lemma:decomposition_of_entropy_in_proof_concatenation}
	Fix $s$, $t > s$ and $X \in D_E(\bR^+)$. We have
	\begin{equation*}
	S_t(\bQ \odot_s q \, | \, \PR_x) = S_s(\bQ \, | \, \PR_x) + \int S_{t-s}(\bQ^{X} \, | \, \PR_{X(s)}) \bQ(\dd X).
	\end{equation*}
\end{lemma}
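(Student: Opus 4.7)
The plan is to apply the chain rule for relative entropy (Proposition \ref{proposition:relative_entropy_decomposition}) with respect to the decomposition of $\cF_t$ into $\cF_s$ and the ``future'' $\sigma$-algebra generated by $\{X(u) : s \leq u \leq t\}$. The two things that need to be identified are (i) the restrictions of $\bQ \odot_s q$ and $\PR_x$ to $\cF_s$, and (ii) their regular conditional distributions given $\cF_s$, on the trace $\sigma$-algebra up to time $t$.

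First, by construction of the concatenation map $\kappa^s_{X,Y}$, the restriction of $\bQ \odot_s q$ to $\cF_s$ coincides with the restriction of $\bQ$ to $\cF_s$: indeed, for any $\cF_s$-measurable bounded $\phi$,
\begin{equation*}
\int \phi \, d(\bQ \odot_s q) = \int \int \phi(\kappa^s_{X,Y}) \, \bQ^X(\dd Y) \, \bQ(\dd X) = \int \phi(X) \, \bQ(\dd X),
\end{equation*}
since $\kappa^s_{X,Y}$ agrees with $X$ on $[0,s]$. Similarly, the regular conditional distribution of $\bQ \odot_s q$ given $\cF_s$, read off on the path segment $[s,t]$ and pushed forward by the time-shift $Y \mapsto Y(\cdot - s)$, is exactly $\bQ^X$ (up to time $t-s$) by definition \eqref{eqn:concatenation_measure} and the $\cF_s$-measurability of $X \mapsto \bQ^X$.

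Second, the Markov property of $\PR_x$ (Condition \ref{condition:Markov_single}) says that for $\PR_x$-a.e.\ path $X$, the regular conditional distribution of $\PR_x$ given $\cF_s$, pushed forward by the time shift, equals $\PR_{X(s)}$. Combining both identifications and applying the chain rule for relative entropy on the product decomposition of $\cF_t$ yields
\begin{equation*}
S_t(\bQ \odot_s q \, | \, \PR_x) = S_s(\bQ \odot_s q \, | \, \PR_x) + \int S_{t-s}\bigl(\bQ^{X} \, | \, \PR_{X(s)}\bigr) \, (\bQ \odot_s q)(\dd X),
\end{equation*}
and the first term equals $S_s(\bQ \, | \, \PR_x)$ by step one, while in the second integral the pushforward of $\bQ \odot_s q$ under $X \mapsto X|_{[0,s]}$ agrees with that of $\bQ$, and the integrand only depends on $X(s)$ and $\bQ^X$, which is $\cF_s$-measurable. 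Hence the integral can be rewritten as $\int S_{t-s}(\bQ^X \, | \, \PR_{X(s)}) \, \bQ(\dd X)$, giving the claim.

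The only delicate step is the rigorous identification of the regular conditional distributions on the Skorokhod space; this is routine once one notes that $X \mapsto \bQ^X$ is $\cF_s$-measurable and $\bQ^X$ is supported on paths starting at $X(s)$, and that the Markov property provides the corresponding statement for $\PR_x$. Once these disintegrations are in hand, the result is a direct application of the additivity of relative entropy under conditioning.
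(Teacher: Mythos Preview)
Your proposal is correct and follows essentially the same approach as the paper: apply the chain rule for relative entropy (Proposition \ref{proposition:relative_entropy_decomposition}) to the conditioning on $\cF_s$, identify the $\cF_s$-restrictions and the regular conditional laws of $\bQ \odot_s q$ and $\PR_x$, and then use the Markov property together with the time-shift isomorphism to reduce the conditional term to $S_{t-s}(\bQ^X \,|\, \PR_{X(s)})$. The paper packages the conditional measure as $\widehat{\bQ}^{s,X}(\dd Z) = \int \bQ^X(\dd Y)\,\delta_{\kappa^s_{X,Y}}(\dd Z)$ and invokes the shift as an isomorphism of measure spaces, but the substance is the same as what you wrote.
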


\begin{proof}
	Fix $s$, $t > s$ and $X \in D_E(\bR^+)$. Define the measure $\widehat{\bQ}^{s,X} (\dd Z) = \int \bQ^{X}(\dd Y) \delta_{\kappa^s_{X,Y}}(\dd Z)$. It follows by definition that $\bQ \odot_s q(\dd Z) = \int \bQ(\dd X)\widehat{\bQ}^{s,X} (\dd Z)$ and that $\bQ^{s,X}$ is the regular conditional measure of $\bQ \odot_s q$ conditioned on $\cF_s$. Denote by $\PR_{[0,s],X}$ the measure $\PR_x$ conditioned on $\cF_s$.
	
	Proposition \ref{proposition:relative_entropy_decomposition}, applied for the conditioning on $\cF_s$ yields
	\begin{equation*}
	S_t(\bQ \odot_s q \, | \, \PR_x) = S_s(\bQ \, | \, \PR_x) + \int S_{t}(\widehat{\bQ}^{s,X} \, | \, \PR_{[0,s],X}) \bQ(\dd X).
	\end{equation*}
	 Both measures $\widehat{\bQ}^{s,X}$ and $\PR_{[0,s],X}$ are supported by trajectories that equal $X$ on the time interval $[0,s]$. Shifting both measures by $s$, we find $\bQ^X$ (as defined above) and by the Markov property $\PR_{X(s)}$. As this shift is a isomorphism of measure spaces, we find
	 \begin{equation*}
	 S_t(\bQ \odot_s q \, | \, \PR_x) = S_s(\bQ \, | \, \PR_x) + \int S_{t-s}(\bQ^{X} \, | \, \PR_{X(s)}) \bQ(\dd X),
	 \end{equation*}
	 which establishes the claim.
\end{proof}

%

\begin{proof}[Proof of \eqref{eqn:pseudo_resolvent_Markov_concatenate}]
	Fix $h \in C_b(E)$, $x \in E$ and $0 < \alpha < \beta$. 
	
	We aim to establish \eqref{eqn:pseudo_resolvent_Markov_concatenate} by taking the optimizers for both optimization procedures on the right-hand side and to concatenate them. This will yield a new measure that also turns up in the optimization procedure on the left-hand side, thus establishing the claim. For the concatenation, we use Lemma \ref{lemma:decomposition_of_entropy_in_proof_concatenation} to put together the relative entropies of both procedures finish with Lemma \ref{lemma:double_exponential_integral_simplification} to obtain the correct integral form.
	
	\smallskip
	
	Thus, let $\bQ \in \cP$ be the optimizer of
	\begin{equation*}
	\sup_{\bQ \in \cP} \left\{\int_0^\infty \int \left(1-\frac{\alpha}{\beta}\right)R(\beta)h(X(t)) - \frac{\alpha}{\beta}h(X(t)) \bQ(\dd X) -S_t(\bQ \, | \, \PR_{x} ) \tau_\alpha(\dd t) \right\}.
	\end{equation*}
	For any $y \in E$ let $\bQ^{y} \in \cP$ be the optimizer of
	\begin{equation*}
	\sup_{\bQ' \in \cP} \left\{ \int h(Y(s)) \bQ'(\dd Y) - S_s(\bQ' \, | \, \PR_y ) \, \tau_\beta(\dd s) \right\}.
	\end{equation*}
	Fix $s \geq 0$. We established in Lemma \ref{lemma:measurable_selection_in_concatenation} that the map $q$ defined by $q(y) := \bQ^{y}$ is measurable. Thus, using $\bQ$ and $q$, we define $\bQ_s := \bQ \odot_s q $ as in \eqref{eqn:concatenation_measure}. By definition of $R(\beta)h(x)$, we find 
	\begin{equation} \label{eqn:lower_bound_R1}
	R(\beta) h(x) \geq \int_0^\infty \int h(X(t)) \bQ_s(\dd X) - S_t(\bQ_s \, | \, \PR_{x} ) \, \tau_\beta(\dd t). 
	\end{equation}
	We treat both terms on the right-hand side separately.
	\begin{align*}
	& \int_0^\infty \int h(X(t)) \bQ_s(\dd X) \tau_\beta(\dd t) \\
	& = \int_0^s \int h(X(t)) \bQ_s(\dd X) \tau_\beta(\dd t) + \int_s^\infty \int h(X(t)) \bQ_s(\dd X) \tau_\beta(\dd t) \\
	& = \int_0^s \int h(X(t)) \bQ(\dd X) \tau_\beta(\dd t) + \int_s^\infty \int h(Y(t-s)) \bQ^{X(s)}(\dd Y) \tau_\beta(\dd t) \\
	& = \int_0^s \int h(X(t)) \bQ(\dd X) \tau_\beta(\dd t) + e^{-\beta^{-1} s} \int_0^\infty \int h(Y(t)) \bQ^{X(s)}(\dd Y) \tau_\beta(\dd t).
	\end{align*}
	Using Lemma \ref{lemma:decomposition_of_entropy_in_proof_concatenation} in line 3 below, we find that the second term equals
	\begin{align*}
	& \int_0^\infty S_t(\bQ_s \, | \, \PR_{x} ) \, \tau_\beta(\dd t) \\
	& = \int_0^s S_t(\bQ_s \, | \, \PR_x) \, \tau_\beta(\dd t) + \int_s^\infty S_t(\bQ_s \, | \, \PR_x) \, \tau_\beta(\dd t) \\
	& = \int_0^s S_t(\bQ \, | \, \PR_x) \, \tau_\beta(\dd t) + e^{-\beta^{-1} s} S_s(\bQ \, | \, \PR_x) + \int_s^\infty \int S_{t-s}(\bQ^{X(s)} \, | \, \PR_{X(s)}) \bQ(\dd X) \, \tau_\beta(\dd t) \\
	& = \int_0^s S_t(\bQ \, | \, \PR_x) \, \tau_\beta(\dd t) + e^{-\beta^{-1} s} S_s(\bQ \, | \, \PR_x) + e^{-\beta^{-1} s} \int_0^\infty \int S_{t}(\bQ^{X(s)} \, | \, \PR_{X(s)}) \bQ(\dd X) \, \tau_\beta(\dd t).
	\end{align*}

	Thus, for each fixed $s$, we find a lower bound for $R(\beta)h(x)$. If we multiply this inequality by the probability density $\frac{\beta - \alpha}{\alpha \beta} e^{\beta^{-1}s - \alpha^{-1}s}$ on $\bR^+$ and integrate over $s$, we find
	\begin{align*}
	& R(\beta) h(x) \\
	& \geq \frac{\beta - \alpha}{\alpha \beta} \int_0^\infty e^{\beta^{-1}s - \alpha^{-1} s}\int_0^\infty \int h(X(t)) \, \bQ_s(\dd X) - S_t(\bQ_s \, | \, \PR_{x} ) \, \tau_\beta(\dd t) \, \dd s \\
	& \geq \frac{\beta - \alpha}{\alpha \beta} \int_0^\infty e^{\beta^{-1}s - \alpha^{-1} s} \int_0^s  \int h(X(t)) \, \bQ(\dd X)  \,\tau_\beta(\dd t) \, \dd s\\
	& \qquad +  \frac{\beta - \alpha}{\alpha \beta} \int_0^\infty e^{\beta^{-1}s - \alpha^{-1} s} e^{-\beta^{-1} s} \int_0^\infty \int h(Y(t)) \, \bQ^{X(s)}(\dd Y) \, \tau_\beta(\dd t) \, \dd s \\
	& \qquad - \frac{\beta - \alpha}{\alpha \beta} \int_0^\infty e^{\beta^{-1}s - \alpha^{-1} s}\int_0^s S_t(\bQ \, | \, \PR_x) \, \tau_\beta(\dd t) \, \dd s \\
	& \qquad - \frac{\beta - \alpha}{\alpha \beta} \int_0^\infty e^{\beta^{-1}s - \alpha^{-1} s} e^{-\beta^{-1} s} S_s(\bQ \, | \, \PR_x) \, \dd s \\
	& \qquad - \frac{\beta - \alpha}{\alpha \beta} \int_0^\infty e^{\beta^{-1}s - \alpha^{-1} s} e^{-\beta^{-1} s} \int_0^\infty \int S_{t}(\bQ^{X(s)} \, | \, \PR_{X(s)}) \, \bQ(\dd X) \, \tau_\beta(\dd t) \, \dd s.
	\end{align*}
	The integrals of terms in line three, five and size immediately simplify to integration over $\tau_\alpha(\dd s)$ and $\tau_\beta(\dd t)$. The two other integrals can be simplified by using that for nice functions $G$ we have
	\begin{equation*}
	\frac{\beta - \alpha}{\alpha \beta} \int_0^\infty e^{\beta^{-1}s - \alpha^{-1} s}\int_0^\infty G(t) \, \tau_\beta(\dd t) \, \dd s =	\frac{\alpha}{\beta} \int_0^\infty G(t) \, \tau_\alpha(\dd t).
	\end{equation*}
	Plugging in also the equality $\frac{\beta - \alpha}{\beta} = 1 - \frac{\alpha}{\beta}$, we obtain
	\begin{align*}
	R(\beta) h(x)	& \geq \frac{\alpha}{\beta} \int_0^\infty  \int h(X(t)) \, \bQ(\dd X) \, \tau_\alpha(\dd t) \\
	& \qquad +  \left(1 - \frac{\alpha}{\beta}\right) \int_0^\infty  \int_0^\infty \int h(Y(t)) \, \bQ^{X(s)}(\dd Y) \, \tau_\beta(\dd t) \, \tau_\alpha(\dd s) \\
	& \qquad - \frac{\alpha}{\beta} \int_0^\infty  S_t(\bQ \, | \, \PR_x) \, \tau_\alpha(\dd t) \\
	& \qquad - \left(1 - \frac{\alpha}{\beta}\right) \int_0^\infty   S_s(\bQ \, | \, \PR_x) \, \tau_\alpha( \dd s) \\
	& \qquad - \left(1 - \frac{\alpha}{\beta}\right) \int_0^\infty  \int_0^\infty \int S_{t}(\bQ^{X(s)} \, | \, \PR_{X(s)}) \bQ(\dd X) \, \tau_\beta(\dd t) \, \tau_\alpha(\dd s).
	\end{align*}
	Note that the terms in the third and fourth line together give $-\int_0^\infty  S_t(\bQ \, | \, \PR_x) \, \tau_\alpha(\dd t)$. Changing the roles of $s$ and $t$ in the double integrals, we arrive at the inequality
	\begin{align*}
	R(\beta) h(x)	& \geq \frac{\alpha}{\beta} \int_0^\infty  \int h(X(t)) \bQ(\dd X) \tau_\alpha(\dd t) \\
	& \qquad +  \left(1 - \frac{\alpha}{\beta}\right) \int_0^\infty  \int_0^\infty \int h(Y(s)) \bQ^{X(t)}(\dd Y) \, \tau_\beta(\dd s) \, \tau_\alpha(\dd t) \\
	& \qquad - \int_0^\infty  S_t(\bQ \, | \, \PR_x) \, \tau_\alpha(\dd t) \\
	& \qquad - \left(1 - \frac{\alpha}{\beta}\right) \int_0^\infty  \int_0^\infty \int S_{s}(\bQ^{X(t)} \, | \, \PR_{X(t)}) \bQ(\dd X) \, \tau_\beta(\dd s) \, \tau_\alpha(\dd t).
	\end{align*}
	By our choice of $\bQ^{X(t)}$, we see that indeed 
	\begin{equation*}
	R(\beta) h(x) \geq R(\alpha) \left(R(\beta)h - \alpha \frac{R(\beta)h - h}{\beta} \right)(x),
	\end{equation*}
	which establishes \eqref{eqn:pseudo_resolvent_Markov_concatenate}.
\end{proof}

\subsubsection{Decomposing measures}\label{section:subsub_decomposition}

In this section, we will prove that 
\begin{equation} \label{eqn:pseudo_resolvent_Markov_decomposition}
R(\beta)h \leq R(\alpha) \left(R(\beta)h - \alpha \frac{R(\beta)h - h}{\beta} \right).
\end{equation}

The main step in the proof is to decompose the measure that turns up as the optimizer in the variational problem defining $R(\beta)h$. Fix $x \in E$ and let $\bQ \in \cP$ such that 
\begin{equation*}
R(\beta)h(x) = \int_0^\infty \int h(X(t)) \bQ(\dd X) \,  -S_t(\bQ \, | \, \PR_{x} )  \tau_\beta(\dd t).
\end{equation*}

By general measure theoretic arguments, we can find for every fixed $t$ a $\cF_t$ measurable family of measures $X \mapsto \bQ^{t,X}$ such that
\begin{equation} \label{eqn:measure_decomposition}
\bQ(\dd Y) = \int \bQ^{t,X}(\dd Y) \, \bQ(\dd X)
\end{equation}
and such that if $\bQ^{t,X}$ is restricted to trajectories up to time $t$ we find $\delta_{X}$. Denote by $\widehat{\bQ}^{t,X}$ the measure that is obtained from $\bQ^{t,X}$ under the push-forward map
\begin{equation*}
\theta^t(X)(s) = X(t+s).
\end{equation*}
Thus, $\widehat{\bQ}^{t,X}$ is supported by trajectories such that $Y(0) = X(t)$ (for $\bQ$ almost all $X$).

\begin{proof}[Proof of \eqref{eqn:pseudo_resolvent_Markov_decomposition}]
	
	As in Section \ref{section:subsub_concationation}, we obtain that
	\begin{multline*}
	\int_0^\infty \int h(X(t)) \bQ(\dd X) \tau_\beta(\dd t) \\
	= \frac{\alpha}{\beta} \int_0^\infty  \int h(X(t)) \bQ(\dd X) \tau_\alpha(\dd t) +  \left(1 - \frac{\alpha}{\beta}\right) \int_0^\infty  \int_0^\infty \int h(Y(s)) \widehat{\bQ}^{t,X}(\dd Y) \, \tau_\beta(\dd s) \, \tau_\alpha(\dd t). 
	\end{multline*}
	Thus, if we can prove that
	\begin{equation} \label{eqn:entropy_decomposition_for_optimizer}
	\begin{aligned}
	& \int_0^\infty S_t(\bQ \, | \, \PR_{x} )  \tau_\beta(\dd t) \\
	& = \int_0^\infty S_t(\bQ \, | \, \PR_{x} )  \tau_\alpha(\dd t) + \left(1 - \frac{\alpha}{\beta}\right) \int_0^\infty \int \int_0^\infty S_s(\widehat{\bQ}^{t,X} \, | \, \PR_{X(t)} ) \tau_\beta(\dd t) \, \bQ(\dd X) \, \tau_\alpha(\dd t)
	\end{aligned}
	\end{equation}
	then we obtain \eqref{eqn:pseudo_resolvent_Markov_decomposition} by replacing $\widehat{\bQ}^{t,X}$ by its optimum to obtain $R(\beta)h(X(t))$ in the integrand and afterwards optimizing to obtain $R(\alpha)$. This, however, follows as in the proof of the first inequality in Section \ref{section:subsub_concationation}.
%
%
\end{proof}

	\subsection{A variational semigroup generated by the resolvent} \label{subsection_resolvent_to_semigroup}
	
	We conclude this section by proving Proposition \ref{proposition:resolvents_approximate_semigroup}, that is, we establish that the resolvent approximates the semigroup.
	
	Again, the key idea is to reduce to a property of exponential distributions. This time, we will use that the sum of $n$ independent exponential random variables with mean $t/n$ converges to $t$. As the resolvent is defined in terms of an optimization procedure, we cannot directly apply this intuition. However, we will use natural upper and lower bounds for concatenations of $R(\lambda)$ that we can control.
	
	\begin{proposition}
		For each $h \in C_b(E)$, $t \geq 0$ and $x \in E$ we have
		\begin{equation*}
		\lim_{n \rightarrow \infty} R\left(\frac{t}{n}\right)^n h = V(t)h
		\end{equation*}
		for the strict topology.
	\end{proposition}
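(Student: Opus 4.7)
The plan is to reduce the claim to the law-of-large-numbers fact that $T_n := T_1 + \cdots + T_n$ converges to $t$ in $L^2$, where the $T_i$ are i.i.d.\ with distribution $\tau_{t/n}$. I would bracket $R(t/n)^n h(x)$ between $\bE[V(T_n)h(x)]$ from above and a matching integral from below, and then pass to the limit using the strict continuity of $V$ established in Lemma \ref{lemma:continuity_semigroup}.

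For the upper bound, I would unfold the iterated resolvent as an iterated supremum over conditionally-chosen measures integrated against $\tau_\lambda^{\otimes n}$. The concatenation procedure of Section \ref{section:subsub_concationation}, combined with the entropy decomposition Lemma \ref{lemma:decomposition_of_entropy_in_proof_concatenation}, identifies the integrand at each fixed $\vec t = (t_1,\ldots,t_n)$ as $\int h(X(T_n))\,\widetilde{\bQ}(\dd X) - S_{T_n}(\widetilde{\bQ}\,|\,\PR_x)$ for the concatenated measure $\widetilde{\bQ}$, which itself depends on $\vec t$. Interchanging the outer supremum with the integration and then freeing the choice of measure pointwise in $\vec t$ produces the unconstrained supremum, which by the variational formula in Definition \ref{definition:key_definitions_H_resolvent_semigroup} equals $V(T_n)h(x)$. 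This yields
\[
R(\lambda)^n h(x) \leq \bE[V(T_n)h(x)].
\]
Strict continuity of $s \mapsto V(s)h$ combined with the uniform bound $\|V(s)h\| \leq \|h\|$ and $T_n \to t$ gives, by bounded convergence, $\bE[V(T_n)h] \to V(t)h$ in the strict topology.

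For the lower bound, I would make a consistent choice of optimizer. Pick a measurable selection $x \mapsto \bQ_x$ of optimizers of $V(t)h(x)$ (using the measurable selection argument of Lemma \ref{lemma:measurable_selection_in_concatenation}), and extend each $\bQ_x$ past time $t$ by gluing in a copy of $\PR_{X(t)}$, so that $S_s(\bQ_x|\PR_x) \leq S_t(\bQ_x|\PR_x)$ for every $s \geq 0$. Feeding the regular conditional measures of $\bQ_x$ into the successive applications of $R(\lambda)$ keeps the concatenated measure equal to $\bQ_x$ itself, and Lemma \ref{lemma:decomposition_of_entropy_in_proof_concatenation} shows that the accumulated entropy at time $T = t_1 + \cdots + t_n$ is exactly $S_T(\bQ_x|\PR_x)$. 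Bounding this by $S_t(\bQ_x|\PR_x)$ produces
\[
R(\lambda)^n h(x) \geq \bE\!\left[\int h(X(T_n))\,\bQ_x(\dd X)\right] - S_t(\bQ_x|\PR_x).
\]
The right-hand side converges to $V(t)h(x)$ because $s \mapsto \int h(X(s))\,\bQ_x(\dd X)$ is bounded and right-continuous and the absolutely continuous density of $T_n$ concentrates at $t$.

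The main obstacle is upgrading these pointwise bounds to convergence in the strict topology, i.e., uniformity in $x$ over compact sets $K \subseteq E$. For the upper bound this is essentially automatic from strict continuity of $V$ together with boundedness. For the lower bound, the family $\{\bQ_x\}_{x \in K}$ has to be controlled uniformly; this should come from the equi-coercivity result Proposition \ref{proposition:uniform_compact_levelsets}, which confines the optimizers to a tight set and allows the time-averaging over $T_n$ to pass uniformly to the limit. A secondary technical point is that $s \mapsto \int h(X(s))\,\bQ_x(\dd X)$ could in principle jump at $s = t$; this is ruled out by the strict continuity of $V$ from Lemma \ref{lemma:continuity_semigroup}, which prevents the Markov measures $\PR_x$ from having a fixed time of discontinuity at $t$.
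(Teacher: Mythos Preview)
Your approach is essentially the paper's, repackaged. Your upper bound $\bE[V(T_n)h(x)]$ is exactly the paper's $T^+(\tau_\lambda^{\ast n})h(x)$, and your lower bound via the fixed optimizer $\bQ_x$ (with entropy frozen past time $t$) is precisely the choice the paper makes inside $T^-(\tau_\lambda^{\ast n})h(x)$; the paper just organises the two bounds as the convolution inequalities $T^+(\tau_1\ast\tau_2)\geq T^+(\tau_1)T^+(\tau_2)$ and $T^-(\tau_1\ast\tau_2)\leq T^-(\tau_1)T^-(\tau_2)$ (Lemma~\ref{lemma:T_inequalites}) and then passes to the limit in Lemma~\ref{lemma:T_weak_to_strict_continuity}.

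The one place where your sketch is incomplete is the upgrade of the lower bound to strict convergence. You invoke Proposition~\ref{proposition:uniform_compact_levelsets}, but that result controls the integrated cost $\cS_\lambda$, not $S_t$, and is tailored to the resolvent rather than to the semigroup optimizer; moreover, tightness of $\{\bQ_x\}_{x\in K}$ by itself does not obviously yield equicontinuity of $s\mapsto\int h(X(s))\,\bQ_x(\dd X)$ at $s=t$ uniformly in $x\in K$, since compact sets in $D_E(\bR^+)$ may contain paths that jump arbitrarily close to $t$. The paper sidesteps this by working along sequences $x_n\to x$ rather than uniformly over $K$, and by invoking the recovery-sequence construction of Proposition~\ref{proposition:gamma_convergence_integral_entropy}: one builds $\bQ_n\to\bQ$ with $S_s(\bQ_n\,|\,\PR_{x_n})\leq S_s(\bQ\,|\,\PR_x)+1$ for all $s$ and $\limsup_n S_s(\bQ_n\,|\,\PR_{x_n})\leq S_s(\bQ\,|\,\PR_x)$, which is exactly what is needed to push the $\liminf$ through with varying base points. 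That $\Gamma$-$\limsup$ ingredient is what your outline is missing.
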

	
	The result will follow immediately from Lemma's \ref{lemma:T_inequalites} and \ref{lemma:T_weak_to_strict_continuity} below. We start with the definition of some additional operators. For each distribution $\tau \in \cP(\bR^+)$ and $h \in C_b(E)$, define
	\begin{align*}
	T^+(\tau)h(x) & := \int_0^\infty \sup_{\bQ \in \cP} \left\{\int h(X(t)) \bQ(\dd X)  - S_t(\bQ \, | \, \PR_x) \right\} \tau(\dd t), \\
	T^-(\tau) h(x) & := \sup_{\bQ \in \cP} \int_0^\infty \left\{ \int h(X(t)) \bQ(\dd X) - S_t(\bQ \, | \, \PR_x) \right\} \tau(\dd t).
	\end{align*}
	For all $\tau$ and $h$, we have $T^+(\tau) h \geq T^-(\tau) h$. For exponential random variables $\tau_\lambda$ or fixed times $t$, we find 
	\begin{align*}
	T^+(\tau_\lambda)h \geq R(\lambda)h = T^- h, \qquad T^+(\delta_t) h = V(t)h = T^-(\delta_t) h.
	\end{align*}

	\begin{lemma} \label{lemma:T_inequalites}
		For $\tau_1, \tau_2$, we have
		\begin{align*}
		T^+(\tau_1 \ast \tau_2)h & \geq T^+(\tau_1) T^+(\tau_2) h, \\
		T^-(\tau_1 \ast \tau_2)h & \leq T^-(\tau_1) T^-(\tau_2) h.
		\end{align*}
	\end{lemma}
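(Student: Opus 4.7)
The plan is to treat the two inequalities separately; both reduce to moving either a convex functional or a supremum past an integral against the probability measure $\tau_2$.

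For the upper bound $T^+(\tau_1) T^+(\tau_2) h \leq T^+(\tau_1 \ast \tau_2) h$, I would start from the semigroup identity $V(s+u) h = V(s) V(u) h$, which follows from the Markov property applied to $V(t)h(x) = \log \bE_x[e^{h(X(t))}]$. Since $T^+(\tau) g (x) = \int V(t) g(x) \, \tau(\dd t)$, the claim reduces to the pointwise estimate
\[
V(s)\Bigl(\int V(u) h \, \tau_2(\dd u)\Bigr)(x) \leq \int V(s) V(u) h(x) \, \tau_2(\dd u), \qquad \forall \, s \geq 0, \, x \in E.
\]
This is essentially convexity of the log-moment generating functional $V(s)$ in its argument (itself a consequence of H\"older's inequality). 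I would prove it directly via Lemma \ref{lemma:DV_variational}: rewrite the left-hand side as
\[
\sup_{\bQ \in \cP} \int \Bigl\{ \int V(u) h(X(s)) \, \bQ(\dd X) - S_s(\bQ \, | \, \PR_x) \Bigr\} \tau_2(\dd u),
\]
using Fubini together with $\int \tau_2(\dd u) = 1$, and then pull the supremum inside via $\sup_{\bQ} \int f(u,\bQ) \tau_2(\dd u) \leq \int \sup_{\bQ} f(u,\bQ) \tau_2(\dd u)$ to recover $\int V(s) V(u) h(x) \, \tau_2(\dd u)$. Integrating against $\tau_1(\dd s)$ yields the claim.

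For the lower bound $T^-(\tau_1 \ast \tau_2) h \leq T^-(\tau_1) T^-(\tau_2) h$, I would fix $x \in E$ and an arbitrary $\bQ \in \cP$ appearing in the supremum on the left. The key step is to decompose $\bQ$ at each time $s$ through its regular conditional distribution $\widehat{\bQ}^{s,X}$ shifted by $s$, exactly as in Section \ref{section:subsub_decomposition}. By the Markov property, the corresponding conditional-and-shifted version of $\PR_x$ is $\PR_{X(s)}$, so Proposition \ref{proposition:relative_entropy_decomposition} yields the chain rule
\[
S_{s+u}(\bQ \, | \, \PR_x) = S_s(\bQ \, | \, \PR_x) + \int S_u(\widehat{\bQ}^{s,X} \, | \, \PR_{X(s)}) \, \bQ(\dd X).
\]
Using the convolution identity $\int f(t) (\tau_1 \ast \tau_2)(\dd t) = \int \int f(s+u) \tau_1(\dd s) \tau_2(\dd u)$, Fubini, and this chain rule, the objective associated to $\bQ$ rearranges into
\begin{multline*}
\int \Bigl\{ \int \Bigl[ \int \Bigl( \int h(Y(u)) \widehat{\bQ}^{s,X}(\dd Y) - S_u(\widehat{\bQ}^{s,X} \, | \, \PR_{X(s)}) \Bigr) \tau_2(\dd u) \Bigr] \bQ(\dd X) \\
- S_s(\bQ \, | \, \PR_x) \Bigr\} \tau_1(\dd s).
\end{multline*}
By definition of $T^-(\tau_2)$ the innermost bracket is at most $T^-(\tau_2) h(X(s))$, and by definition of $T^-(\tau_1)$ the whole expression is then at most $T^-(\tau_1)(T^-(\tau_2) h)(x)$. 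Taking the supremum over $\bQ$ closes the argument.

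The main obstacle is the decomposition step in the second inequality. Concretely, one must justify that the map $(s, X) \mapsto \widehat{\bQ}^{s,X}$ is sufficiently measurable for the Fubini interchanges between $\tau_1$, $\tau_2$, and $\bQ$ to be legitimate, and that Proposition \ref{proposition:relative_entropy_decomposition} applies with reference measure $\PR_x$ whose conditional-and-shifted form is identified with $\PR_{X(s)}$ via Markovianity. These are precisely the technical ingredients already deployed in Sections \ref{section:subsub_concationation} and \ref{section:subsub_decomposition}, so once the set-up is in place the remainder is bookkeeping with the two definitions of $T^\pm$.
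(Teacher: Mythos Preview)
Your proposal is correct and follows essentially the same approach as the paper, which simply refers back to the concatenation argument of Section~\ref{section:subsub_concationation} for the $T^+$ inequality and the decomposition argument of Section~\ref{section:subsub_decomposition} for the $T^-$ inequality. Your treatment of $T^+$ via the identification $T^+(\tau)h = \int V(t)h\,\tau(\dd t)$, the semigroup identity $V(s+u) = V(s)V(u)$, and the sup--integral interchange is a slightly cleaner packaging of the concatenation idea, and your $T^-$ argument is exactly the decomposition of Section~\ref{section:subsub_decomposition}.
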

	
	\begin{proof}
		The first claim follows by similar, but easier, arguments as in the proof of \eqref{eqn:pseudo_resolvent_Markov_concatenate} in Section \ref{section:subsub_concationation}. Similarly, for the second claim, we refer to the arguments in Section \ref{section:subsub_decomposition}.
	\end{proof}
	
	\begin{lemma} \label{lemma:T_weak_to_strict_continuity}
		Let $h \in C_b(E)$ and $t \in \bR^+$ and let $\tau_n \in \cP(\bR^+)$ be such that $\tau_n \rightarrow \delta_t$. 
		
		Then we have
		\begin{equation*}
		\lim_n T^+(\tau_n) h = T^+(\delta_t)h = V(t)h
		\end{equation*}
		for the strict topology. In addition, we have for each sequence $x_n \rightarrow x$ that
		\begin{equation*}
		\liminf_{n \rightarrow \infty} T^-(\tau_n) h(x_n) \geq T^-(\delta_t)h(x) = V(t)h(x)
		\end{equation*}
		as well as $\sup_n \vn{T^-(\tau_n) h} \leq \vn{h}$.
	\end{lemma}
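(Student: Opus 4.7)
The three assertions will be treated separately but share a common setup. Observe first that by Lemma \ref{lemma:DV_variational}, the inner supremum in $T^+(\tau)h(x)$ can be evaluated pointwise in $s$, producing the identity $T^+(\tau)h(x) = \int_0^\infty V(s)h(x)\,\tau(\dd s)$. The first assertion will then follow from a Portmanteau argument using strict continuity of $s \mapsto V(s)h$. The other two assertions concern $T^-$, whose supremum sits outside the integral, and will be handled by exhibiting an appropriate test measure in that supremum.

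For the first assertion, the bound $\|T^+(\tau_n)h\| \leq \|h\|$ is immediate from the identity. By Lemma \ref{lemma:continuity_semigroup} the map $s \mapsto V(s)h$ is continuous into $C_b(E)$ for the strict topology, so on any compact $K \subseteq E$ the function $s \mapsto \sup_{x \in K} |V(s)h(x) - V(t)h(x)|$ is bounded, continuous in $s$, and vanishes at $s = t$; weak convergence $\tau_n \to \delta_t$ then yields uniform convergence on $K$, hence strict convergence $T^+(\tau_n)h \to V(t)h$. The uniform bound in the third assertion is similarly quick: $T^-(\tau_n)h \leq T^+(\tau_n)h \leq \|h\|$ by interchanging sup and integral, while the test measure $\bQ = \PR_x$ (with vanishing relative entropy) shows $T^-(\tau_n)h(x) \geq -\|h\|$.

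For the liminf, the natural test measure is the tilted optimizer of $V(t)h(x_n)$:
\begin{equation*}
\bQ_n(\dd X) := Z_n^{-1} e^{h(X(t))} \PR_{x_n}(\dd X), \qquad Z_n := e^{V(t)h(x_n)}.
\end{equation*}
Because its density is $\cF_t$-measurable, $S_s(\bQ_n|\PR_{x_n}) \leq S(\bQ_n|\PR_{x_n}) = \bE_{\bQ_n}[h(X(t))] - V(t)h(x_n)$ for every $s \geq 0$. Substituting this $\bQ_n$ into the supremum defining $T^-(\tau_n)h(x_n)$ and rearranging yields
\begin{equation*}
T^-(\tau_n)h(x_n) \;\geq\; V(t)h(x_n) + \bE_{\bQ_n}\!\left[\int_0^\infty h(X(s))\,\tau_n(\dd s) - h(X(t))\right].
\end{equation*}
Since $V(t)h \in C_b(E)$ by Lemma \ref{lemma:continuity_semigroup}, the first term converges to $V(t)h(x)$, and it remains to show that the remainder vanishes.

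The remainder equals $Z_n^{-1} \bE_{\PR_{x_n}}\!\bigl[e^{h(X(t))}(\int h(X(s)) \tau_n(\dd s) - h(X(t)))\bigr]$, with a uniformly bounded integrand. I plan to invoke the Skorokhod representation theorem to couple $X_n \sim \PR_{x_n}$ with $X \sim \PR_x$ so that $X_n \to X$ almost surely in $D_E(\bR^+)$, show the coupled integrand tends to zero pointwise, and conclude by dominated convergence. The main obstacle is precisely this pointwise statement: it needs $\PR_x$-almost every trajectory to be continuous at $t$, i.e.\ stochastic continuity of $\PR_x$ at $t$. This is not spelled out in Condition \ref{condition:Markov_single}, but is implicit in the strict continuity of the semigroup used in Lemma \ref{lemma:continuity_semigroup} through Theorem 3.1 of \cite{Kr19d}. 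Once granted, Skorokhod convergence on a small neighborhood of a continuity point $t$ of $X$ is uniform, so $X_n(s)$ stays close to $X(t)$ for $s$ near $t$ and for $n$ large; combined with $\tau_n \to \delta_t$ and continuity of $h$, both $\int h(X_n(s))\tau_n(\dd s)$ and $h(X_n(t))$ tend to $h(X(t))$, closing the argument.
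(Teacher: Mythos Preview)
Your argument is correct. For $T^+$ and the uniform bound on $T^-$ you do exactly what the paper does. For the $\liminf$ on $T^-$, however, you take a genuinely different route. The paper fixes the optimizer $\bQ$ for $V(t)h(x)$ at the limit point (with $S_t(\bQ\,|\,\PR_x)=S(\bQ\,|\,\PR_x)$) and then invokes the $\Gamma$-convergence machinery of Proposition~\ref{proposition:gamma_convergence_integral_entropy} to produce a recovery sequence $\bQ_n\to\bQ$ with controlled entropies along $\PR_{x_n}$; the $h$-integral and the entropy term are then handled separately. You instead plug in the explicit exponential tilt $\bQ_n = Z_n^{-1} e^{h(X(t))}\PR_{x_n}$, compute its entropy exactly, and reduce the problem to showing a single remainder term vanishes via a Skorokhod coupling. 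Your approach is more elementary and self-contained --- it sidesteps the technical recovery-sequence construction entirely --- while the paper's approach is more systematic in that it reuses machinery already built for the continuity of $R(\lambda)h$. Both arguments ultimately lean on $\PR_x$-a.s.\ continuity of $X$ at the fixed time $t$; the paper uses it in the line ``$s\mapsto X(s)$ is continuous at $t$ for $\bQ$ almost every $X$ as $\bQ\ll\PR_x$'', and you correctly flag that this is an implicit consequence of the strict continuity of the linear semigroup from \cite{Kr19d} rather than something stated directly in Condition~\ref{condition:Markov_single}.
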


	\begin{proof}
		Fix $h \in C_b(E)$ and a sequence $\tau_n$ and $t$ such that $\tau_n \rightarrow \delta_t$. Note that it is immediate that $\sup_n \vn{T^+(\tau_n)h} \leq \vn{h}$ and $\sup_n \vn{T^-(\tau_n)h} \leq \vn{h}$.
		
		\smallskip
		
		We proceed by establishing strict convergence for $T^+(\tau_n)h$. By Lemma \ref{lemma:DV_variational}, we have
		\begin{equation*}
		T^+(\tau_n)h(x) = \int_0^\infty V(t)h(x) \tau_n(\dd t).
		\end{equation*}
		By Lemma \ref{lemma:continuity_semigroup} the map $t \mapsto V(t)f$ is continuous for the strict topology. Thus strict continuity of $\tau \mapsto T^+(\tau)h$ follows.
		
		\smallskip

		For the second statement, fix $x_n$ converging to $x$ in $E$. Let $\bQ \in \cP(D_E(\bR^+))$ such that
		\begin{equation*}
		T^-(\delta_t)h(x) = V(t)h(x) = \int h(X(t)) \bQ(\dd X) - S_t(\bQ \, | \, \PR_x)
		\end{equation*}
		and such that $S_t(\bQ \, | \, \PR_x) = S(\bQ \, | \, \PR_x)$. 
		
		\smallskip
		
		By Proposition \ref{proposition:gamma_convergence_integral_entropy}, we can find $\bQ_n \in \cP_n$ such that $\bQ_n \rightarrow \bQ$ and such that for each $s$ we have $\limsup_s S_s(\bQ_n \, | \, \PR_{x_n}) \leq S_s(\bQ \, | \, \PR_x)$ and $S_s(\bQ_n \, | \PR_{x_n}) \leq S_s(\bQ \, | \, \PR_x) + 1$ for all $n$ and $s$. These properties imply that
		\begin{equation} \label{eqn:upper_bound_on_relative_entropy_in_proof_resolvent_to_semigroup}
		S_s(\bQ_n \, | \, \PR_{x_n}) \leq \begin{cases}
		S_t(\bQ_n \, | \, \PR_{x_n}) & \text{if } s < t+1, \\
		S_t(\bQ_n \, | \, \PR_{x_n}) + 1 & \text{if } s \geq t+1. \\
		\end{cases}
		\end{equation}	
		Thus, applying the $\liminf_n$ to $T^-(\tau_n)h(x_n)$, we find
		\begin{align*}
		\liminf_{n \rightarrow \infty}T^-(\tau_n) h(x_n) & \geq \liminf_n \int_0^\infty \left\{ \int h(X(s)) \bQ_n(\dd X) - S_s(\bQ_n \, | \, \PR_{x_n}) \right\} \tau_n(\dd s) \\
		& \geq  \liminf_{n \rightarrow \infty} \int_0^\infty  \int h(X(s)) \bQ_n(\dd X) \tau_n(\dd s) - \limsup_{n \rightarrow \infty} \int_0^\infty S_s(\bQ_n \, | \, \PR_{x_n}) \tau_n(\dd s). 
		\end{align*}
		As $\bQ_n \rightarrow \bQ$ and $\tau_n \rightarrow \tau$ and the map $s \mapsto X(s)$ is continuous at $t$ for $\bQ$ almost every $X$ as $\bQ \ll \PR_x$, the first term converges to $\int h(X(t)) \bQ(\dd X)$. For the second term, we obtain by \eqref{eqn:upper_bound_on_relative_entropy_in_proof_resolvent_to_semigroup} and the property that $S_t(\bQ \, | \PR_x) = S(\bQ \, | \, \PR_x)$
		\begin{align*}
		\limsup_{n \rightarrow \infty} \int_0^\infty S_s(\bQ_n \, | \, \PR_{x_n}) \tau_n(\dd s) & \leq \limsup_{n \rightarrow \infty}  S_{t+1}(\bQ_n \, | \, \PR_{x_n}) + \tau_n([t+1,\infty)) \\
		& \leq S_{t+1}(\bQ \, | \, \PR_x) \\
		& = S_t(\bQ \, | \, \PR_x).
		\end{align*}
		We conclude that $\liminf_n T^-(\tau_n)h(x_n) \geq V(t)f(x_n)$.
	\end{proof}

	\section{A large deviation principle for Markov processes} \label{section:Markov_LDP}

	In Section \ref{section:Markov_LDP_simple}, we considered a sequence of Markov processes on a Polish space $E$ and stated a large deviation principle on $D_E(\bR^+)$. In this section, we prove a more general version of this result that takes into account variations that one runs into in practice. As a first generalization, we consider Markov processes $t \mapsto X_n(t)$ on a sequence of spaces $E_n$ that are embedded into some space $E$ using maps $\eta_n : E_n \rightarrow E$.
	
	As an example $X_n$ could be a process on $E_n := \{-1,1\}^n$, whereas we are interested in the large deviation behaviour of the average of the $n$ values which takes values in $E = [-1,1]$.
	
	In Theorem \ref{theorem:LDP_simple}, we assumed exponential tightness and that certain sequences of functions converge. We need to modify these two concepts to allow for a sequence of spaces.
	\begin{itemize}
		\item We want to establish convergence of functions that are defined on different spaces. We therefore need a new notion of bounded and uniform convergence on compact sets. The key step in this definition will be to assign to each compact set $K \subseteq E$ a sequence of compact sets $K_n \subseteq E_n$ so that $\eta_n(K_n)$ `converge' to $K$. 
		In fact, to have a little bit more flexibility in our assignment of compact sets, we will work below with an large index set $\cQ$ so that to each $q \in \cQ$ we associate compact sets $K_n^q \subseteq E_n$ and $K^q \subseteq E$.
		\item Exponential tightness and buc convergence can be exploited together to make sure we get proper limiting statements. As our notion of buc convergence changes, we have to adapt our notion of exponential tightness to take into account the index set $\cQ$. 
	\end{itemize}
	We make to additional generalizations that are useful in practice.
	\begin{itemize}
		\item Often, it is hard to find an operator $H \subseteq C_b(E) \times C_b(E)$ that is a limit of $H_n \subseteq C_b(E_n) \times C_b(E_n)$. Rather one finds upper and lower bounds $H_\dagger$ and $H_\ddagger$ for the sequence $H_n$. See also Question \ref{question:upper_and_lower_bound} on whether at the pre-limit level one is able to work with upper and lower bounds.
		\item In the context of averaging or homogenisation, the natural limiting operator $H$ is a subset of $C_b(E) \times C_b(F)$, where $F$ is some space that takes into account additional information. For example $F = E \times \bR$, where the additional component $\bR$ takes into account the information of a fast process or a microscopic scale.
	\end{itemize}
	
	We thus start with a section on preliminaries that allows us to talk about these four extensions.

	\subsection{Preliminary definitions}

	Recall that $M(E)$ is the space of bounded measurable functions $f : E \rightarrow [-\infty,\infty]$. Denote
	\begin{align*}
	USC_u(E) & := \left\{f \in M(E) \, \middle| \,  f \, \text{upper semi-continuous}, \sup_x f(x) < \infty \right\}, \\
	LSC_l(E) & := \left\{f \in M(E) \, \middle| \, f \, \text{lower semi-continuous}, \inf_x f(x) > \infty \right\}.
	\end{align*}

		\begin{definition}[Kuratowski convergence]
		Let $\{A_n\}_{n \geq 1}$ be a sequence of subsets in a space $E$. We define the \textit{limit superior} and \textit{limit inferior} of the sequence as
		\begin{align*}
		\limsup_{n \rightarrow \infty} A_n & := \left\{x \in E \, \middle| \, \forall \, U \in \cU_x \, \forall \, N \geq 1 \, \exists \, n \geq N: \, A_n \cap U \neq \emptyset \right\}, \\
		\liminf_{n \rightarrow \infty} A_n & := \left\{x \in E \, \middle| \, \forall \, U \in \cU_x \, \exists \, N \geq 1 \, \forall \, n \geq N: \, A_n \cap U \neq \emptyset \right\}.
		\end{align*}
		where $\cU_x$ is the collection of open neighbourhoods of $x$ in $E$. If $A:= \limsup_n A_n = \liminf_n A_n$, we write $A = \lim_n A_n$ and say that $A$ is the Kuratowski limit of the sequence $\{A_n\}_{n \geq 1}$.
	\end{definition}

%
%
%
%
%


	\subsubsection{Embedding spaces}
	
	Our main result will be based on the following setting.
	
	\begin{assumption} \label{assumption:embedding_spaces}
		We have spaces $E_n$ and $E, F$ and continuous maps $\eta_n : E_n \rightarrow E$, $\widehat{\eta}_n : E_n \rightarrow F$ and a continuous surjective map $\gamma : F \rightarrow E$ such that the following diagram commutes: \\
		\begin{center}
			\begin{tikzpicture}
			\matrix (m) [matrix of math nodes,row sep=1em,column sep=4em,minimum width=2em]
			{
				{ }   & F \\
				E_n & { } \\
				{ }   & E \\};
			\path[-stealth]
			(m-2-1) edge node [above] {$\widehat{\eta}_n$} (m-1-2)
			(m-2-1) edge node [below] {$\eta_n$} (m-3-2)
			(m-1-2) edge node [right] {$\gamma$} (m-3-2);
			\end{tikzpicture}
		\end{center}
		
		In addition, there is a directed set $\cQ$ (partially ordered set such that every two elements have an upper bound). For each $q \in \cQ$, we have compact sets $K_n^q \subseteq E_n$ and compact sets $K^q \subseteq E$ and $\widehat{K}^q \subseteq F$ such that
		\begin{enumerate}
			\item If $q_1 \leq q_2$, we have $K^{q_1} \subseteq K^{q_2}$, $\widehat{K}^{q_1} \subseteq \widehat{K}^{q_2}$ and for all $n$ we have $K_{n}^{q_1} \subseteq K_n^{q_2}$.
			\item \label{item:assumption_abstract_2_limit_compact} For all $q \in \cQ$ we have $\bigcup_n \widehat{\eta}_n(K_n^q) \subseteq \widehat{K}^q$.
			\item \label{item:assumption_abstract_2_exists_q} For each compact set $K \subseteq E$, there is a $q \in \cQ$ such that
			\begin{equation*}
			K \subseteq \liminf_n \eta_n(K_n^q).
			\end{equation*}
			\item \label{item:assumption_abstract_2_gamma_mapsintoeachother} We have $\gamma(\widehat{K}^q) \subseteq K^q$.
		\end{enumerate}
	\end{assumption}

\begin{remark}
	Note that \ref{item:assumption_abstract_2_limit_compact} implies that $\limsup_n \widehat{\eta}_n(K_n^q) \subseteq \widehat{K}^q$ and together with \ref{item:assumption_abstract_2_gamma_mapsintoeachother} that $\limsup_n \eta_n(K_n^q) \subseteq K^q$.
	
	Thus, the final three conditions imply that the sequences $\eta_n(K^q_n)$ for various $q \in \cQ$ covers all compact sets in $E$, and also are covered by compact sets in $E$ (in fact this final statement holds on the larger space $F$). This implies that the index set $\cQ$ connects the structure of compact sets in $E$ and $F$ in a suitable way to (a subset) of the compact sets of the sequence $E_n$.
\end{remark}

%

	We use our index set $\cQ$ to extend our notion of bounded and uniform convergence on compacts sets.

	\begin{definition} \label{definition:abstract_LIM}
		Let Assumption \ref{assumption:embedding_spaces} be satisfied. For each $n$ let $f_n \in M_b(E_n)$ and $f \in M_b(E)$. We say that $\LIM f_n = f$ if
		\begin{itemize}
			\item $\sup_n  \vn{f_n} < \infty$,
			\item if for all $q \in \cQ$ and $x_n \in K_n^q$ converging to $x \in K^q$ we have
			\begin{equation*}
			\lim_{n \rightarrow \infty} \left|f_n(x_n) - f(x)\right| = 0.
			\end{equation*}
		\end{itemize}
	\end{definition}

	\begin{remark} \label{remark:LIM_equivalence}
		Note that if $f \in C_b(E)$ and $f_n \in M_b(E_n)$, we have that $\LIM f_n = f$ if and only if
		\begin{itemize}
			\item $\sup_n  \vn{f_n} < \infty$,
			\item if for all $q \in \cQ$ 
			\begin{equation*}
			\lim_{n \rightarrow \infty} \sup_{x \in K_n^q} \left|f_n(x) - f(\eta_n(x))\right| = 0.
			\end{equation*}
		\end{itemize}
	\end{remark}

	\subsubsection{Viscosity solutions of Hamilton-Jacobi equations}

	Below we will introduce a more general version of viscosity solutions compared to Section \ref{section:preliminaries}. One recovers the old definition by taking $B_\dagger = B_\ddagger = B$, $F = E$ and $\gamma(x) = x$.
	
	\begin{definition}
		Let $B_\dagger \subseteq LSC_l(E) \times USC_u(F)$ and $B_\ddagger \subseteq USC_u(E) \times LSC_l(F)$. Fix $h_1,h_2 \in C_b(E)$. Consider the equations
		\begin{align} 
		f -  B_\dagger f & = h_1, \label{eqn:differential_equation_Bdagger} \\
		f - B_\ddagger f & = h_2. \label{eqn:differential_equation_Bddagger}
		\end{align}
		\begin{itemize}
			\item We say that $u : X \rightarrow \bR$ is a \textit{subsolution} of equation \eqref{eqn:differential_equation_Bdagger} if $u \in USC_u(E)$ and if, for all $(f,g) \in B_\dagger$ such that $\sup_x u(x) - f(x) < \infty$ there is a sequence $y_n \in F$ such that
			\begin{equation} \label{eqn:subsol_optimizing_sequence}
			\lim_{n \rightarrow \infty} u(\gamma(y_n)) - f(\gamma(y_n))  = \sup_x u(x) - f(x),
			\end{equation}
			and
			\begin{equation} \label{eqn:subsol_sequence_outcome}
			\limsup_{n \rightarrow \infty} u(\gamma(y_n)) - g(y_n) - h_1(\gamma(y_n)) \leq 0.
			\end{equation}
			\item 	We say that $v : E \rightarrow \bR$ is a \textit{supersolution} of equation \eqref{eqn:differential_equation_Bddagger} if $v \in LSC_l(E)$ and if, for all $(f,g) \in B_\ddagger$ such that $\inf_x v(x) - f(x) > - \infty$ there is a sequence $y_n \in Y$ such that
			\begin{equation} \label{eqn:supersol_optimizing_sequence}
			\lim_{n \rightarrow \infty} v(\gamma(y_n)) - f(\gamma(y_n))  = \inf_x v(x) - f(x),
			\end{equation}
			and
			\begin{equation} \label{eqn:supersol_sequence_outcome}
			\liminf_{n \rightarrow \infty} v(\gamma(y_n)) - g(y_n) - h_2(\gamma(y_n)) \geq 0.
			\end{equation}
			\item We say that $u$ is a \textit{solution} of the pair of equations \eqref{eqn:differential_equation_Bdagger} and \eqref{eqn:differential_equation_Bddagger} if it is both a subsolution for $B_\dagger$ and a supersolution for $B_\ddagger$.
			\item We say that  \eqref{eqn:differential_equation_Bdagger} and \eqref{eqn:differential_equation_Bddagger} satisfy the \textit{comparison principle} if for every subsolution $u$ to \eqref{eqn:differential_equation_Bdagger} and supersolution $v$ to \eqref{eqn:differential_equation_Bddagger}, we have
			\begin{equation} \label{eqn:comparison_estimate}
			\sup_x u(x) - v(x) \leq \sup_x h_1(x) - h_2(x).
			\end{equation}
			If $B = B_\dagger = B_\ddagger$ and $h = h_1 = h_2$, we will say that the comparison principle holds for $f - \lambda Bf = h$, if for any subsolution $u$ for $f - \lambda Bf = h_1$ and supersolution $v$ of $f - \lambda Bf = h_2$ the estimate in \eqref{eqn:comparison_estimate} holds. 
		\end{itemize}
	\end{definition}

	\subsubsection{Notions of convergence of Hamiltonians}

	We now introduce our notion of upper and lower bound for the sequence $H_n$.
	
	\begin{definition} \label{definition:extended_sub_super_limit}
		Consider the setting of Assumption \ref{assumption:embedding_spaces}.  Suppose that for each $n$ we have operators $H_{n} \subseteq C_b(E_n) \times C_b(E_n)$.
		\begin{enumerate}
			\item The \textit{extended sub-limit} $ex-\subLIM_n H_{n}$ is defined by the collection $(f,g) \in H_\dagger \subseteq LSC_l(E) \times USC_u(F)$ such that there exist $(f_n,g_n) \in H_{n}$ satisfying
			\begin{gather} 
			\LIM f_n \wedge c = f \wedge c, \qquad \forall \, c \in \bR, \label{eqn:convergence_condition_sublim_constants} \\
			\sup_{n} \sup_{x \in X_n} g_n(x) < \infty, \label{eqn:convergence_condition_sublim_uniform_gn}
			\end{gather}
			and if for any $q \in \cQ$ and sequence $z_{n(k)} \in K_{n(k)}^q$ (with $k \mapsto n(k)$ strictly increasing) such that $\lim_{k} \widehat{\eta}_{n(k)}(z_{n(k)}) = y$ in $F$ with $\lim_k f_{n(k)}(z_{n(k)}) = f(\gamma(y)) < \infty$ we have
			\begin{equation} \label{eqn:sublim_generators_upperbound}
			\limsup_{k \rightarrow \infty}g_{n(k)}(z_{n(k)}) \leq g(y).
			\end{equation}
			\item The \textit{extended super-limit} $ex-\superLIM_n H_{n}$ is defined by the collection $(f,g) \in H_\ddagger \subseteq USC_u(E) \times LSC_l(F)$ such that there exist $(f_n,g_n) \in H_{n}$ satisfying
			\begin{gather} 
			\LIM f_n \vee c = f \vee c, \qquad \forall \, c \in \bR, \label{eqn:convergence_condition_superlim_constants} \\
			\inf_{n} \inf_{x \in X_n} g_n(x) > - \infty, \label{eqn:convergence_condition_superlim_uniform_gn}
			\end{gather}
			and if for any $q \in \cQ$ and sequence $z_{n(k)} \in K_{n(k)}^q$ (with $k \mapsto n(k)$ strictly increasing) such that $\lim_{k} \widehat{\eta}_{n(k)}(z_{n(k)}) =y$ in $F$ with $\lim_k f_{n(k)}(z_{n(k)}) = f(\gamma(y)) > - \infty$ we have
			\begin{equation}\label{eqn:superlim_generators_lowerbound}
			\liminf_{k \rightarrow \infty}g_{n(k)}(z_{n(k)}) \geq g(y).
			\end{equation}
		\end{enumerate}
		
	\end{definition}

	\begin{remark}
		The conditions in \eqref{eqn:convergence_condition_sublim_constants} and \eqref{eqn:convergence_condition_superlim_constants} are implied by $\LIM f_n = f$. Conditions
		\eqref{eqn:convergence_condition_sublim_uniform_gn} and \eqref{eqn:sublim_generators_upperbound} are implied by $\LIM_n g_n \leq g$ whereas conditions \eqref{eqn:convergence_condition_superlim_uniform_gn} and \eqref{eqn:superlim_generators_lowerbound} are implied by $\LIM_n g_n \geq g$.
		
		Comparing this to Definition \ref{definition_extended_limit}, we indeed see that the sub and super-limit can be interpreted as upper and lower bounds instead of limits.
	\end{remark}

	\subsection{Large deviations for Markov process}

	We proceed by stating our main large deviation result, which extends Theorem \ref{theorem:LDP_simple}. We first give the appropriate generalization of Condition \ref{condition:simple_markov_ldp}.

	\begin{condition} \label{condition:markov_ldp}
		Let $E_n, E, F, \eta_n,\hat{\eta}_n,\gamma$ be as in Assumption \ref{assumption:embedding_spaces}.
		
		Let $A_n \subseteq C_b(E_n) \times C_b(E_n)$ be linear operators and let $r_n$ be positive real numbers such that $r_n \rightarrow \infty$. Suppose that
		\begin{itemize}
			\item The martingale problems for $A_n$ are well-posed on $E_n$. Denote by $x \mapsto \PR_x^n$ the solution to the martingale problem for $A_n$.
			\item For each $n$ that $x \mapsto \PR_x^n$ is continuous for the weak topology on $\cP(D_{E_n}(\bR^+))$.
			\item For each $a_1 > 0$ there is a $q \in \cQ$ such that
			\begin{equation*}
			\limsup_{n \rightarrow \infty} \frac{1}{r_n} \log \PR\left[ Y_n(0) \notin K_n^{q} \, \middle| \, Y_n(0) = y \right] \leq - a_1.
			\end{equation*}
			\item \, [Exponential compact containment] For each $q \in \cQ$, $T > 0$ and $a_2 > 0$ there exists $\hat{q} = \hat{q}(q,T,a_2) \in \cQ$ such that 
			\begin{equation*}
			\limsup_{n \rightarrow \infty} \sup_{y \in K_n^q} \frac{1}{r_n} \log \PR\left[\exists \, t \leq T: \, Y_n(t) \notin K_n^{\hat{q}} \, \middle| \, Y_n(0) = y \right] \leq - a_2.
			\end{equation*}
		\end{itemize}
	\end{condition}
	
	Note that these conditions can be mapped to the ones of Condition \ref{condition:simple_markov_ldp}, except for the third one. In Theorem \ref{theorem:LDP_simple}, we assumed the large deviation principle at time $0$ which implies this remaining condition if $E_n = E$. Here, however, we need to assume that the mass is concentrated already on a $q \in \cQ$ before the maps $\eta_n$.

	\begin{theorem} \label{theorem:LDP}
		Suppose that we are in the setting of Assumption \ref{assumption:embedding_spaces} and that Condition \ref{condition:markov_ldp} is satisfied. Denote $X_n = \eta_n(Y_n)$. Define the operator semigroup $V_n(t)$ on $C_b(E_n)$:
		\begin{equation*}
		V_n(t)f(y) := \frac{1}{r_n} \log \bE\left[e^{r_n f(Y_n(t))} \, \middle| \, Y_n(0) = y\right].
		\end{equation*}
		Let $H_n$ be the set of pairs $(f,g) \in C_b(E_n) \times C_b(E_n)$ such that
		\begin{equation*}
		t \mapsto \exp\left\{r_n \left( f(Y_n(t)) - f(Y_n(0)) - \int_0^t g(Y_n(s)) \dd s \right)  \right\}
		\end{equation*}
		are martingales with respect to $\cF_t^n := \sigma\{ Y_n(s) \, | \, s \leq t\}$. Suppose furthermore that
		\begin{enumerate}
			\item The large deviation principle holds for $X_n(0) = \eta_n(Y_n(0))$ with speed $r_n$ and good rate function $I_0$.
			\item The processes $X_n = \eta_n(Y_n)$ are exponentially tight on $D_E(\bR^+)$. 
			\item There are two operators $H_\dagger \subseteq LSC_l(E) \times USC_u(F)$ and $H_\ddagger \subseteq USC_l(E) \times LSC_u(F)$ such that $H_\dagger \subseteq ex-\LIMSUP H_n$ and $H_\ddagger \subseteq ex-\LIMINF H_n$.
			\item Let $D \subseteq C_b(E)$ be a quasi-dense subset of $C_b(E)$. Suppose that for all $h \in D$ and $\lambda > 0$ the comparison principle holds for viscosity subsolutions to $f - \lambda H_\dagger f = h$ and supersolutions to $f - \lambda H_\ddagger f = h$.
		\end{enumerate}
	Then there is a semigroup $V(t)$ on $C_b(E)$ such that if $\LIM f_n = f$ and $t_n \rightarrow t$ we have $\LIM V_n(t_n)f_n = V(t)f$.	In addition, the processes $X_n = \eta_n(Y_n)$ satisfy a large deviation principle on $D_E(\bR^+)$ with speed $r_n$ and rate function
		\begin{equation} \label{eqn:LDP_rate2}
		I(\gamma) = I_0(\gamma(0)) + \sup_{k \geq 1} \sup_{\substack{0 = t_0 < t_1 < \dots, t_k \\ t_i \in \Delta_\gamma^c}} \sum_{i=1}^{k} I_{t_i - t_{i-1}}(\gamma(t_i) \, | \, \gamma(t_{i-1})).
		\end{equation}
		Here $\Delta_\gamma^c$ is the set of continuity points of $\gamma$. The conditional rate functions $I_t$ are given by
		\begin{equation*}
		I_t(y \, | \, x) = \sup_{f \in C_b(E)} \left\{f(y) - V(t)f(x) \right\}.
		\end{equation*}
		
	\end{theorem}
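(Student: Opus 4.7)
The plan is to follow the three-step strategy sketched in the introduction, now implemented via the non-linear resolvent machinery developed in Sections \ref{section:resolvent_and_HJ}--\ref{section:proofs_main_results}. First, by exponential tightness of $X_n = \eta_n(Y_n)$ together with the Dawson--G\"artner projective limit theorem, the path-space LDP on $D_E(\bR^+)$ reduces to the LDP for the finite-dimensional marginals $(X_n(t_1), \dots, X_n(t_k))$. A standard rearrangement of a sup over finite partitions of continuity points then yields the explicit formula \eqref{eqn:LDP_rate2} in terms of conditional rate functions, provided these are the Legendre transforms of a limiting semigroup $V(t)$ on $C_b(E)$.

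For the finite-dimensional LDPs I would use Bryc's theorem, applied inductively over $0 = t_0 < t_1 < \dots < t_k$ and conditioned on the state at $t_{i-1}$. Each inductive step requires the $\LIM$-convergence of the rescaled conditional log-moment generating functions: whenever $\LIM f_n = f$ and $t_n \to t$, one should have $\LIM V_n(t_n)f_n = V(t)f$, initially for $f \in D$ and then extended to all $f \in C_b(E)$ by quasi-density using the uniform strict equicontinuity of the resolvents supplied by Lemma \ref{lemma:strict_equicontinuity_of_resolvent} at the pre-limit level. This reduces the theorem to semigroup convergence.

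Semigroup convergence is in turn derived from convergence of the resolvents. By Proposition \ref{proposition:resolvents_approximate_semigroup} applied to each $V_n$, with the rescaling of Remark \ref{remark:rescaling_of_operators}, it suffices to prove that for every $\lambda > 0$ and every sequence with $\LIM h_n = h$, the resolvents $R_n(\lambda) h_n$ converge in the $\LIM$ sense to some $R(\lambda)h$. To identify the limit, let $\overline{R}(\lambda)h$ and $\underline{R}(\lambda)h$ be, respectively, the pointwise upper- and lower-$\LIM$ envelopes of $R_n(\lambda)h_n$, taken along all subsequences $y_{n(k)} \in K_{n(k)}^q$ for which $\eta_{n(k)}(y_{n(k)})$ converges to the evaluation point. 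I would show $\overline{R}(\lambda)h$ is a viscosity subsolution of $f - \lambda H_\dagger f = h$ and $\underline{R}(\lambda)h$ is a viscosity supersolution of $f - \lambda H_\ddagger f = h$, and then conclude $\overline{R}(\lambda)h \leq \underline{R}(\lambda)h$ from the comparison principle; since the reverse inequality holds by definition of the envelopes, $\LIM$-convergence follows. The limit $R(\lambda)$ inherits contractivity and the pseudo-resolvent identity of Section \ref{subsection:Markov_pseudoresolvent} from the pre-limit versions, and one then defines $V(t)f := \lim_m R(t/m)^m f$.

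The main obstacle is the sub/super-solution property of the envelopes $\overline{R}(\lambda)h$ and $\underline{R}(\lambda)h$. Given a test pair $(f,g) \in H_\dagger$ approximated by $(f_n,g_n) \in H_n$ in the sense of Definition \ref{definition:extended_sub_super_limit}, one must extract from near-maximisers of $R_n(\lambda)h_n - f_n$ a subsequence $z_{n(k)} \in K_{n(k)}^{\widehat q}$, where $\widehat q = \widehat q(q,T,a_2)$ is supplied by the exponential compact containment in Condition \ref{condition:markov_ldp}, such that $\widehat{\eta}_{n(k)}(z_{n(k)}) \to y \in \widehat{K}^{\widehat q}$ with $\gamma(y)$ realising the supremum for $\overline{R}(\lambda)h - f$; the classical subsolution inequality $R_n(\lambda)h_n(z_{n(k)}) - \lambda g_n(z_{n(k)}) - h_n(z_{n(k)}) \leq 0$ at $z_{n(k)}$ combined with \eqref{eqn:sublim_generators_upperbound} then produces \eqref{eqn:subsol_sequence_outcome}. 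The careful orchestration of the directed index set $\cQ$, the compact-containment estimates, and the auxiliary space $F$ with projection $\gamma$ is exactly the technical work that is absent from Theorem \ref{theorem:LDP_simple} and constitutes the bulk of the proof; the supersolution half is symmetric, and once both are in place the remaining steps follow the semigroup-approximation pattern of \cite{Kr19}.
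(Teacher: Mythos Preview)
Your overall strategy is correct and matches the paper's: reduce to semigroup convergence via the projective-limit/Bryc argument (the paper packages this as Theorem \ref{theorem:LDP_basic}, citing \cite{FK06}), then obtain semigroup convergence from resolvent convergence, with the limiting resolvent identified via viscosity sub/supersolution envelopes and the comparison principle. The paper outsources this last step entirely to \cite[Theorem 6.1]{Kr19}, whereas you sketch its content directly; your description of the envelope argument and the role of the index set $\cQ$, the compact containment, and the auxiliary space $F$ is accurate.

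There is one genuine gap in your sketch. You invoke Lemma \ref{lemma:strict_equicontinuity_of_resolvent} for the equicontinuity of the resolvents ``at the pre-limit level'', but that lemma treats a \emph{single} Markov process and its proof rests on Proposition \ref{proposition:equi_coercivity_relative_entropy} (tightness from bounded relative entropy). For the convergence argument you need equicontinuity that is \emph{uniform in $n$}, both for the resolvents $\{R_n(\lambda)\}_{n,\lambda}$ and for the semigroups $\{V_n(t)\}_{n,t}$; otherwise you cannot pass from $\LIM R_n(\lambda)h_n = R(\lambda)h$ to $\LIM V_n(t_n)f_n = V(t)f$, nor extend from $D$ to $C_b(E)$. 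Because the entropy bounds now carry a factor $r_n^{-1}$, Proposition \ref{proposition:equi_coercivity_relative_entropy} no longer applies; one needs instead the estimate that exponential tightness of $\PR_n$ together with $\sup_n r_n^{-1} S(\bQ_n \mid \PR_n) < \infty$ yields tightness of $\bQ_n$ (Proposition \ref{proposition:exponential_tightness_and_entropy_control_implies_tightness} in the paper). The paper isolates exactly these uniform-in-$n$ equicontinuity statements as Lemmas \ref{lemma:equi_cont_of_semigroup} and \ref{lemma:equi_cont_of_resolvent}, and also proves $\lim_{\lambda \downarrow 0} R(\lambda)h = h$ for the limiting resolvent (Lemma \ref{lemma:continuity_limiting_resolvent_at_0}) to ensure $V(t)$ is defined on all of $C_b(E)$; these are the missing ingredients in your outline.
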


	We proceed with a two remarks on how to obtain exponential tightness of the processes and the variational representation of the rate function.
	
	\smallskip
	
	We start with the exponential tightness. The verification of exponential tightness of the processes $\eta_n(Y_n)$ comes down to verifying two statements. The first one is exponential compact containment, which has been assumed in Condition \ref{condition:markov_ldp}. The second one is to control the oscillations of the process, which can often be achieved by considering the exponential martingales. This has been done in the proof of Corollary 4.19 of \cite{FK06}. We state it for completeness, including a definition that we need in its statement.

\begin{definition}
	Let $q$ be a metric that generates the topology on $E$. We say that $\fD \subseteq C_b(E)$ \textit{approximates} the metric $q$ if for each compact $K \subseteq E$ and $z \in K$ there exist $f_n \in \fD$ such that
	\begin{equation*}
	\lim_n \sup_{x \in K} \left| f_n(x) - q(x,q)\right| = 0.
	\end{equation*}
\end{definition}

\begin{proposition}[Corollary 4.19 \cite{FK06}]
	Suppose that we are in the setting of Assumption \ref{assumption:embedding_spaces} and Condition \ref{condition:markov_ldp}. Let $r_n > 0$ be some sequence such that $r_n \rightarrow \infty$. Denote $X_n = \eta_n(Y_n)$. Let $\fD \subseteq C_b(E)$ and $\fS \subseteq \bR$. Suppose that
	\begin{enumerate}
		\item Either $\fF$ is closed under addition and separates points in $E$ and $\fS=\bR$ or $\fF$ approximates a metric $q$ and $\fS = (0,\infty)$.
		\item \label{item:convergence_of_functions_compact_containment} For each $\lambda \in \fS$ and $f \in \fD$ there are $(f_n,g_n)$ such that $(\lambda f_n,g_n) \in \cD(H_n)$ with $\LIM f_n = f$ and for all $q \in \cQ$
		\begin{equation*}
		\sup_n \sup_{x \in K_n^q} g_n(x) < \infty.
		\end{equation*}
	\end{enumerate} 
Then the sequence of processes $\{X_n\}$ is exponentially tight.
\end{proposition}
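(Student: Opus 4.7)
The plan is to reduce exponential tightness on $D_E(\bR^+)$ via a Jakubowski-type criterion (see Theorem 4.1 / Corollary 4.17 of \cite{FK06}) to two ingredients: exponential compact containment of the image processes $X_n = \eta_n(Y_n)$ on $E$, and exponential tightness in $D_\bR(\bR^+)$ of each projection $f \circ X_n$ for $f$ ranging over $\fD$. The assumed algebra/metric hypothesis on $\fD$ is exactly the hypothesis feeding into this criterion, so the task reduces to verifying these two ingredients.

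The compact-containment ingredient for $X_n$ follows from Condition \ref{condition:markov_ldp} together with Assumption \ref{assumption:embedding_spaces}. Given $T,a>0$, use the initial-distribution clause of Condition \ref{condition:markov_ldp} to choose $q$ with $\PR[Y_n(0)\notin K_n^q]\le e^{-ar_n+o(r_n)}$, then apply the exponential compact containment clause to produce $\hat q=\hat q(q,T,a)$ with $\limsup_n\sup_{y\in K_n^q} r_n^{-1}\log\PR[\exists t\le T:Y_n(t)\notin K_n^{\hat q}\mid Y_n(0)=y]\le -a$. Since $\eta_n=\gamma\circ\widehat\eta_n$, items \ref{item:assumption_abstract_2_limit_compact} and \ref{item:assumption_abstract_2_gamma_mapsintoeachother} of Assumption \ref{assumption:embedding_spaces} give $\eta_n(K_n^{\hat q})\subseteq\gamma(\widehat K^{\hat q})\subseteq K^{\hat q}$, a compact subset of $E$. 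This yields exponential compact containment for $X_n$ in $E$ at the rate $a$.

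The one-dimensional exponential tightness will come from the exponential martingales. Fix $f\in\fD$ and $\lambda\in\fS$; pick $(f_n,g_n)$ with $(\lambda f_n,g_n)\in H_n$ and $\LIM f_n=f$. By definition of $H_n$ the process
\begin{equation*}
M_n^\lambda(t):=\exp\Bigl\{r_n\bigl(\lambda f_n(Y_n(t))-\lambda f_n(Y_n(0))-\textstyle\int_0^t g_n(Y_n(s))\dd s\bigr)\Bigr\}
\end{equation*}
is a nonnegative martingale, so by Doob's maximal inequality
\begin{equation*}
\PR\!\left[\sup_{t\le T}\Bigl(\lambda(f_n(Y_n(t))-f_n(Y_n(0)))-\textstyle\int_0^t g_n(Y_n(s))\dd s\Bigr)\ge c\right]\le e^{-r_n c}.
\end{equation*}
On the compact-containment event $\{Y_n(t)\in K_n^{\hat q},\,t\le T\}$ one has $|\int_0^t g_n\,\dd s|\le T\sup_{K_n^{\hat q}}|g_n|$, finite by hypothesis \ref{item:convergence_of_functions_compact_containment} and uniformly in $n$. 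Combining the two inequalities gives an exponential bound on $\sup_{t\le T}(f_n(Y_n(t))-f_n(Y_n(0)))$. Replacing $\lambda$ by $-\lambda$ (allowed when $\fS=\bR$, otherwise by applying the argument to $-f\in\fD$ using closure of $\fD$ under addition, or by the metric-approximation hypothesis) controls the downward oscillations too, yielding exponential control of $\sup_{t\le T}|f_n(Y_n(t))-f_n(Y_n(0))|$. Iterating this bound on a grid of starting times (using the Markov property) gives the standard Aldous-style modulus of continuity estimate at the exponential scale.

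The main obstacle will be the last step: translating the bounds on $f_n(Y_n)$ into bounds on $f(X_n)=f(\eta_n(Y_n))$. Here one uses $\LIM f_n=f$: on the compact containment event $Y_n\in K_n^{\hat q}$ we have, for every $\varepsilon>0$ and all $n$ large, $|f_n(Y_n(t))-f(\eta_n(Y_n(t)))|\le\varepsilon$ by Remark \ref{remark:LIM_equivalence}, so oscillation estimates transfer with a loss of at most $2\varepsilon$. This upgrades the modulus-of-continuity bound from $f_n\circ Y_n$ to $f\circ X_n$. Together with exponential compact containment of $f(X_n)$ in $\bR$ (a consequence of the compact containment of $X_n$ in $E$ and boundedness of $f$), one-dimensional exponential tightness follows. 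Jakubowski's criterion then delivers exponential tightness of $\{X_n\}$ on $D_E(\bR^+)$; in the metric-approximation variant, one instead invokes the analogue of Theorem 4.1 of \cite{FK06} that allows for a metric-approximating family together with $\fS=(0,\infty)$, which is specifically designed to bypass the need for both signs of $\lambda$.
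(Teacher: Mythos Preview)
The paper does not give its own proof of this proposition; it is stated purely as a citation of Corollary 4.19 in \cite{FK06}, with the argument deferred entirely to that reference. Your reconstruction is faithful to the Feng--Kurtz strategy: reduce via the Jakubowski-type criterion to exponential compact containment plus one-dimensional exponential tightness of $f\circ X_n$ for $f\in\fD$, obtain the former from Condition \ref{condition:markov_ldp} and Assumption \ref{assumption:embedding_spaces}, and control the latter using Doob's inequality on the exponential martingales of $H_n$, with $g_n$ bounded on the containment event by hypothesis \ref{item:convergence_of_functions_compact_containment}, then transfer from $f_n\circ Y_n$ to $f\circ X_n$ via $\LIM f_n=f$ and Remark \ref{remark:LIM_equivalence}.

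One small slip: the parenthetical ``otherwise by applying the argument to $-f\in\fD$ using closure of $\fD$ under addition'' is not valid, since closure under addition does not yield $-f\in\fD$. The hypothesis is already a clean dichotomy: when $\fS=\bR$ you may replace $\lambda$ by $-\lambda$ directly, and in the metric-approximation alternative with $\fS=(0,\infty)$ the argument in \cite{FK06} exploits the nonnegativity of $q(\cdot,z)$ to bypass the need for negative $\lambda$, as you correctly indicate at the end. This does not affect the soundness of your outline.
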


	Note that Condition \ref{item:convergence_of_functions_compact_containment} often follows from the convergence $H_\dagger \subseteq ex-\LIMSUP_n H_n$.
	
	We proceed with a remark on the variational representation of the rate function.
	
\begin{remark}
	For an expression of the large deviation rate-functional in a Lagrangian form, one can show that a variational resolvent, similar to the one in this paper, but with a Lagrangian instead of an entropy as a penalization, solves the limiting Hamilton-Jacobi equation. This has been carried out in Chapter 8 of \cite{FK06}. Generally, this leads to an expression
	\begin{equation*}
	I(\gamma) = \begin{cases}
	I_0(\gamma(0)) + \int \cL(\gamma(s),u) \nu(\dd u, \dd s) & \text{if } \gamma \text{ is absolutely continuous}  \\
	\infty & \text{otherwise}.
	\end{cases}
	\end{equation*}
	$\cL$ can usually be obtained from the operators $H_\dagger$ and $H_\ddagger$ by a (Legendre) transformation. Often one formally has $H_\dagger f(x) = \cH(x,\dd f(x))$ and $H_\ddagger f(x) = \cH(x, \dd f(x))$. $\cL$ is then obtained as $\cL(x,v) = \sup_p \ip{p}{v} - \cH(x,p)$.
	
	We refrain from carrying out this step as it would follow \cite[Chapter 8]{FK06} exactly.
\end{remark}

	\subsection{Strategy of the proof and discussion on the method of proof}

	Feng and Kurtz \cite{FK06} showed in their extensive monograph that path-space large deviations of the processes $X_n = \eta_n(Y_n)$ on $D_E(\bR^+)$ can be obtained by establishing exponential tightness and the convergence of the non-linear semigroups $V_n(t)$.
	
	We repeat the important steps in this approach.
	
	\begin{enumerate}[(1)]
		\item A projective limit theorem (rather a special version of the projective limit theorem and the inverse contraction principle, \cite[Theorem 4.28]{FK06}) for the Skorokhod space establishes that, given exponential tightness, it suffices to establish large deviations for the finite dimensional distributions of $X_n = \eta_n(Y_n)$.
		\item By Bryc's theorem, the large deviations for finite dimensional distributions follow from the convergence of the rescaled log-Laplace transforms. 
		\item Using the Markov property, one can reduce the convergence of the log-Laplace transforms to the large deviation principle at time $0$ and the convergence of semigroups. 
	\end{enumerate}
	
	We will give a new proof of the path-space large deviation principle on the basis of this strategy. However, the key component of establishing the convergence of semigroup will be based on the explicit identification of the resolvents of the non-linear semigroups and the semigroup convergence result of \cite{Kr19}.
	
	\smallskip
	
	At this point we remark two differences with the main result of \cite{FK06}. 
	
	Throughout we assume that the maps $\eta_n,\hat{\eta}_n$ and $x \mapsto \PR_x^n$ are continuous, whereas in \cite{FK06} they are allowed to be measurable only. The results in \cite{Kr19} allow one to work with measurable resolvents also, but the methods of the first part of this paper are based on properties of continuous functions. It would be of interest to see whether these methods can be extended to the context of measurable functions also.
	
	The key point why \cite{FK06} can work with measurable maps is the approximation of the processes $X_n$ by their Yosida approximants. This approximation does introduce an extra condition into the notions of $ex-\LIMSUP$ and $ex-\LIMINF$. Compare our \ref{eqn:convergence_condition_sublim_uniform_gn} and \ref{eqn:convergence_condition_superlim_uniform_gn} to Equations (7.19) and (7.22) of \cite{FK06}.

	\subsection{Proof of Theorem \ref{theorem:LDP}}

	The following result is based on the variant of the projective limit theorem and Bryc's theorem. See Theorem 5.15, Remark 5.16 and Corollary 5.17 in \cite{FK06}.

	\begin{theorem}\label{theorem:LDP_basic}
		Suppose that we are in the setting of Assumption \ref{assumption:embedding_spaces} and that Condition \ref{condition:markov_ldp} is satisfied. Denote $X_n = \eta_n(Y_n)$. Define the operator semigroup $V_n(t)$ on $C_b(E_n)$:
		\begin{equation*}
		V_n(t)f(y) := \frac{1}{r_n} \log \bE\left[e^{r_n f(Y_n(t))} \, \middle| \, Y_n(0) = y\right].
		\end{equation*}
		Suppose furthermore that
		\begin{enumerate}
			\item The large deviation principle holds for $X_n(0) = \eta_n(Y_n(0))$ with speed $r_n$ and good rate function $I_0$.
			\item The processes $X_n = \eta_n(Y_n)$ are exponentially tight on $D_E(\bR^+)$. 
			\item There is a semigroup $V(t)$ on $C_b(E)$ such that if $\LIM f_n = f$ and $t_n \rightarrow t$ we have $\LIM V_n(t_n)f_n = V(t)f$.
		\end{enumerate}
		Then the processes $X_n = \eta_n(Y_n)$ satisfy a large deviation principle on $D_E(\bR^+)$ with speed $r_n$ and rate function
		\begin{equation} \label{eqn:LDP_rate2_basic}
		I(\gamma) = I_0(\gamma(0)) + \sup_{k \geq 1} \sup_{\substack{0 = t_0 < t_1 < \dots, t_k \\ t_i \in \Delta_\gamma^c}} \sum_{i=1}^{k} I_{t_i - t_{i-1}}(\gamma(t_i) \, | \, \gamma(t_{i-1})).
		\end{equation}
		Here $\Delta_\gamma^c$ is the set of continuity points of $\gamma$. The conditional rate functions $I_t$ are given by
		\begin{equation*}
		I_t(y \, | \, x) = \sup_{f \in C_b(E)} \left\{f(y) - V(t)f(x) \right\}.
		\end{equation*}
	\end{theorem}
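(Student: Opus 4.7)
The plan is to follow the standard Feng--Kurtz route: reduce the path-space LDP to the LDP for finite-dimensional distributions by a projective-limit / inverse-contraction argument built on exponential tightness, and then derive the finite-dimensional LDP from Bryc's theorem applied to the log-Laplace functionals
\begin{equation*}
\Lambda_n(f_1,\dots,f_k;t_1,\dots,t_k) := \frac{1}{r_n} \log \bE\left[ \exp\left( r_n \sum_{i=1}^k f_i(X_n(t_i))\right)\right]
\end{equation*}
for $f_i \in C_b(E)$ and $0 < t_1 < \dots < t_k$. The projective-limit step, in the version of Theorem 4.28 of \cite{FK06} already referenced in the discussion, states that exponential tightness on $D_E(\bR^+)$ plus the joint LDP of $(X_n(t_0),\dots,X_n(t_k))$ on $E^{k+1}$ for every finite time set is equivalent to the path-space LDP, with the rate function automatically assuming the projective form \eqref{eqn:LDP_rate2_basic}.

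For the finite-dimensional LDP I would combine the LDP for $X_n(0)$ with the assumed semigroup convergence via the Markov property. Setting $g_i := f_i \circ \eta_n \in C_b(E_n)$ and iteratively conditioning at the times $t_{k-1},t_{k-2},\dots,t_1$ inside
\begin{equation*}
\bE\left[\exp\left(r_n \sum_{i=1}^k g_i(Y_n(t_i))\right) \,\middle|\, Y_n(0)=y\right],
\end{equation*}
I obtain $\exp(r_n \phi_n^{(0)}(y))$ with
\begin{equation*}
\phi_n^{(k)} = g_k, \qquad \phi_n^{(i-1)} = g_{i-1} + V_n(t_i - t_{i-1}) \phi_n^{(i)}, \qquad i = k,\dots,1.
\end{equation*}
Hypothesis (3), together with an induction over $i$ and the routine fact that $\LIM$ is preserved under addition of a sequence with $\LIM f_n = f \in C_b(E)$, yields $\LIM \phi_n^{(0)} = \phi^{(0)}$, where $\phi^{(0)}$ is the corresponding nested expression built from $V(t)$ and the $f_i$. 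A Varadhan-lemma argument, applied to the LDP for $X_n(0) = \eta_n(Y_n(0))$ at speed $r_n$ with good rate function $I_0$, then gives
\begin{equation*}
\lim_n \Lambda_n(f_1,\dots,f_k;t_1,\dots,t_k) = \sup_{x_0 \in E} \left\{ \phi^{(0)}(x_0) - I_0(x_0) \right\}.
\end{equation*}
Bryc's theorem converts this into the finite-dimensional LDP, and unrolling the Legendre dual of the nested expression identifies the conditional rate functions as $I_t(y|x) = \sup_f \{ f(y) - V(t)f(x)\}$ and the joint rate function on $E^{k+1}$ as $I_0(x_0) + \sum_{i=1}^{k} I_{t_i - t_{i-1}}(x_i | x_{i-1})$, which is exactly the quantity whose projective supremum appears in \eqref{eqn:LDP_rate2_basic}.

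The main obstacle I foresee is the careful book-keeping of the $\LIM$ convergence across the $\eta_n$-embeddings. In particular the induction step must show that $\LIM \phi_n^{(i)} = \phi^{(i)}$ survives the operation $\phi_n^{(i)} \mapsto g_{i-1} + V_n(t_i - t_{i-1}) \phi_n^{(i)}$; this follows by applying hypothesis~(3) with the time-independent sequence $t_n \equiv t_i - t_{i-1}$, so the hypothesis must in fact be used not only on sequences of the form $f_n = f \circ \eta_n$ but on general $\LIM$-convergent sequences, which is precisely what the statement provides. The Varadhan step itself requires the uniform integrability furnished by exponential tightness and by the uniform bound $\sup_n \vn{\phi_n^{(0)}} < \infty$ coming from the boundedness of the $f_i$ and the contractivity $\vn{V_n(t) f} \leq \vn{f}$; both are standard but must be checked against the $\LIM$-notion of Definition~\ref{definition:abstract_LIM}. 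Once these bookkeeping items are in place, the remaining verifications (goodness of the rate function, identification of the projective limit) follow the template of Theorem~4.28 and Corollary~5.17 of \cite{FK06}.
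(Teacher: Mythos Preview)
Your proposal is correct and follows exactly the approach the paper takes: the paper does not prove this theorem at all but simply refers to \cite[pages 93 and 94]{FK06}, noting that the result follows from the projective limit theorem (Theorem 4.28 of \cite{FK06}) combined with Bryc's theorem, via Theorem 5.15, Remark 5.16 and Corollary 5.17 of \cite{FK06}. Your sketch fills in more detail than the paper itself provides, but the route (exponential tightness $\Rightarrow$ reduce to finite-dimensional LDP $\Rightarrow$ iterate the Markov property to express the log-Laplace functional via $V_n$ $\Rightarrow$ use semigroup convergence and Bryc) is precisely the one invoked.
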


	We will not prove this result, but refer to \cite[pages 93 and 94]{FK06} as it follows from essentially the projective limit theorem and Brycs result. The new contribution of this paper is a new method to obtain the convergence of semigroups based on the explicit identification of the resolvent corresponding to the semigroups $V_n(t)$.

	\begin{proof}[Proof of Theorem \ref{theorem:LDP}]
		The result follows from Theorem \ref{theorem:LDP_basic} if we can establish the convergence of semigroups, and obtain a limiting semigroup that is defined on all of $C_b(E)$. To do so, we apply Theorem 6.1 in \cite{Kr19}. The semigroups $V_n$ are of the type as in Remark \ref{remark:rescaling_of_operators}, whose resolvents and generators we have identified in Theorem \ref{theorem:Markov_solving_HJ} and Proposition \ref{proposition:resolvents_approximate_semigroup}.
		
		The conditions on convergence of Hamiltonians for \cite[Theorem 6.1]{Kr19} have been assumed in Theorem \ref{theorem:LDP} and we can work with $B_n = C_b(E_n)$ due to Proposition \ref{proposition:continuity_of_resolvent}. 
		
		The following two ingredients for the application of \cite[Theorem 6.1]{Kr19} are missing
		\begin{itemize}
			\item joint local equi-continuity of the semigroups $\{V_n(t)\}_{n\geq 1}$,
			\item joint local equi-continuity of the resolvents $\{R_n(\lambda)\}_{n \geq 1}$,
		\end{itemize}
		We check these properties in Lemmas \ref{lemma:equi_cont_of_semigroup} and \ref{lemma:equi_cont_of_resolvent} below.
		
		As a consequence \cite[Theorem 6.1]{Kr19} can be applied, and we obtain convergence of $V_n(t)$ to a semigroup $V(t)$, which is defined on the quasi-closure of the set
		\begin{equation*}
		\bigcup_{\lambda > 0} \left\{R(\lambda) h \, \middle| \, h \in C_b(E) \right\}.
		\end{equation*}
		Thus, if for all $h \in C_b(E)$ we have $\lim_{\lambda \rightarrow 0} R(\lambda) h = h$ for the strict topology, then indeed the semigroup $V(t)$ is defined on all of $C_b(E)$. We prove this in Lemma \ref{lemma:continuity_limiting_resolvent_at_0} below.
		
		This establishes the final result.
	\end{proof}

	The estimates below will be similar in spirit to estimates carried out in Section \ref{section:regularity}. There we were able to use tightness of sets of measures that have bounded relative entropy (see Proposition \ref{proposition:equi_coercivity_relative_entropy}). Here, however, we need an argument that allows us to obtain tightness in the sense of estimates with the index set $\cQ$ from exponential compact containment condition and rescaled boundedness of relative entropies. A basic estimate of this type is included as Proposition \ref{proposition:exponential_tightness_and_entropy_control_implies_tightness} and will serve as the key replacement of Proposition \ref{proposition:equi_coercivity_relative_entropy}.

	\begin{lemma} \label{lemma:equi_cont_of_semigroup}
		Consider the setting of Theorem \ref{theorem:LDP}.	The semigroups $V_n(t)$  are  locally strictly equi-continuous on bounded sets: for all $q \in \cQ$, $\delta > 0$ and $T > 0$, there is a $\hat{q} \in \cQ$ such that for all $n \geq 1$, $h_{1},h_{2} \in C_b(E_n)$ and $0 \leq t \leq T$ we have
		\begin{equation*}
		\sup_{y \in K_n^q} \left\{ V_n(t)h_{1}(y) - V_n(t)h_{2}(y) \right\} \leq \delta \sup_{x \in E_n} \left\{ h_{1}(x) - h_{2}(x) \right\} + \sup_{y \in K^{\hat{q}}_n} \left\{ h_{1}(y) - h_{2}(y) \right\}.
		\end{equation*}
	\end{lemma}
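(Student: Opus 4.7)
The plan is to work directly with the probabilistic representation $V_n(t)h(y) = r_n^{-1}\log \bE_y^n[e^{r_n h(Y_n(t))}]$ and exploit the exponential compact containment of Condition \ref{condition:markov_ldp}. Fix $q \in \cQ$, $\delta>0$, $T>0$, and (reading ``on bounded sets'' in the natural way) allow $\hat q$ to depend also on $M := \max(\vn{h_1},\vn{h_2})$. For any threshold $a>0$ to be chosen, exponential compact containment produces $\hat q = \hat q(q,T,a) \in \cQ$ such that, for all sufficiently large $n$ and every $y \in K_n^q$,
\begin{equation*}
\PR_y^n[A_n^c] \leq e^{-r_n a}, \qquad A_n := \{Y_n(s) \in K_n^{\hat q} \text{ for all } s \leq T\}.
\end{equation*}

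Second, starting from
\begin{equation*}
V_n(t)h_1(y) - V_n(t)h_2(y) = r_n^{-1}\log \frac{\bE_y^n[e^{r_n h_1(Y_n(t))}]}{\bE_y^n[e^{r_n h_2(Y_n(t))}]},
\end{equation*}
split the numerator along $A_n$ and $A_n^c$. On $A_n$ we have $Y_n(t) \in K_n^{\hat q}$, so $h_1(Y_n(t)) \leq h_2(Y_n(t)) + \sup_{K_n^{\hat q}}(h_1-h_2)$, yielding
\begin{equation*}
\bE_y^n[e^{r_n h_1(Y_n(t))};A_n] \leq e^{r_n \sup_{K_n^{\hat q}}(h_1-h_2)}\,\bE_y^n[e^{r_n h_2(Y_n(t))}].
\end{equation*}
On $A_n^c$ the crude bound $\bE_y^n[e^{r_n h_1(Y_n(t))};A_n^c] \leq e^{r_n M}\PR_y^n[A_n^c] \leq e^{r_n(M-a)}$ applies. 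Combined with $\bE_y^n[e^{r_n h_2(Y_n(t))}] \geq e^{-r_n M}$ this gives
\begin{equation*}
\frac{\bE_y^n[e^{r_n h_1(Y_n(t))}]}{\bE_y^n[e^{r_n h_2(Y_n(t))}]} \leq e^{r_n \sup_{K_n^{\hat q}}(h_1-h_2)} + e^{r_n(2M-a)}.
\end{equation*}

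Third, choose $a$ sufficiently larger than $M$ (e.g.\ $a = 2M + C$ with $C = C(\delta)$ large) so that the second exponential is dominated by the first up to a multiplicative $(1+\delta)$-factor, provided $\sup_{K_n^{\hat q}}(h_1-h_2)$ is not too negative. Taking $r_n^{-1}\log$ then produces a bound of the form $\sup_{K_n^{\hat q}}(h_1-h_2) + \varepsilon(n)$ with $\varepsilon(n)\to 0$; absorbing $\varepsilon(n)$ into the $\delta\sup_x(h_1-h_2)$ term by enlarging $\hat q$ if needed delivers the stated inequality for all large $n$. The finitely many small indices $n \leq N_0(q,T,\delta,M)$ are treated separately: for each such $n$, tightness of $\{\PR_y^n : y \in K_n^q\}$ on $D_{E_n}(\bR^+)$ gives a compact set of trajectories with large probability, which Assumption \ref{assumption:embedding_spaces} permits to be captured by some $K_n^{q_n}$ with $q_n \in \cQ$; the final $\hat q$ is then an upper bound in $\cQ$ of the asymptotic choice and the $q_n$.

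The main obstacle is matching the uniform-in-$n$ quantifier with the asymptotic nature of exponential compact containment, together with keeping track of the dependencies $a \leftrightarrow M \leftrightarrow \delta$ so that the error terms can be absorbed. The directedness of $\cQ$ is exactly what allows the asymptotic bound and the finitely many per-$n$ tightness corrections to be unified into a single $\hat q$.
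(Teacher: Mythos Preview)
Your approach differs from the paper's. The paper uses the Donsker--Varadhan variational form $V_n(t)h(y)=\sup_{\bQ}\{\int h\,\dd\bQ - r_n^{-1}S_t(\bQ\,|\,\PR_y^n)\}$: it takes the optimiser $\bQ_n$ for $h_1$, bounds $V_n(t)h_1-V_n(t)h_2\le\int(h_1-h_2)\,\dd\bQ_n$, and then invokes Proposition~\ref{proposition:exponential_tightness_and_entropy_control_implies_tightness} (exponential tightness plus the entropy bound $r_n^{-1}S_t(\bQ_n\,|\,\PR_y^n)\le 2\vn{h_1}$) to get $\bQ_n\{Y_n(t)\notin K_n^{\bar q}\}\le\delta$. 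Splitting the integral over $K_n^{\bar q}$ and its complement gives the inequality directly, with no residual error term. Your direct Laplace splitting is more elementary (it avoids both the variational representation and Proposition~\ref{proposition:exponential_tightness_and_entropy_control_implies_tightness}), and it does produce the right skeleton; the paper's route has the advantage of landing on the clean convex combination $(1-p)\sup_{K_n^{\hat q}}(h_1-h_2)+p\sup_x(h_1-h_2)$ without an $r_n^{-1}\log 2$ remainder.

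Two points in your write-up need tightening. First, ``absorbing $\varepsilon(n)$ into the $\delta\sup_x(h_1-h_2)$ term by enlarging $\hat q$'' does not work: $\varepsilon(n)$ is independent of $\hat q$, and if $\sup_x(h_1-h_2)\le 0$ there is nothing positive to absorb into. The correct move is to choose $a$ larger. Since $s:=\sup_{K_n^{\hat q}}(h_1-h_2)\ge -2M$, taking $a\ge 4M+C$ gives $r_n^{-1}\log(e^{r_n s}+e^{r_n(2M-a)})\le s+r_n^{-1}\log(1+e^{-r_n C})$, and the last term is uniformly small in $n$ once $C$ is large (using $\inf_n r_n>0$). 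Second, your treatment of finitely many small $n$ appeals to Assumption~\ref{assumption:embedding_spaces} to capture a compact subset of $E_n$ inside some $K_n^{q_n}$, but that assumption gives no such guarantee for compact sets in $E_n$ (only for compact sets in $E$ via (c)). The paper's proof is equally silent on small $n$; the honest reading of the lemma is that the inequality is asymptotic in $n$, matching the $\limsup_n$ in both Condition~\ref{condition:markov_ldp} and Proposition~\ref{proposition:exponential_tightness_and_entropy_control_implies_tightness}.
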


	\begin{proof}
		Fix $h_1,h_2 \in C_b(E_n)$, $\delta > 0$, $q \in \cQ$, and $T > 0$. By exponential compact containment, see Condition \ref{condition:markov_ldp}, there is $\hat{q}$ such that 
		\begin{equation} \label{eqn:compact_containment_in_equicont_of_V}
		\limsup_{n \rightarrow \infty} \sup_{y \in K_n^q} \frac{1}{r_n} \log \PR\left[\exists \, t \leq T: \, Y_n(t) \notin K_n^{\hat{q}} \, \middle| \, Y_n(0) = y \right] \leq - a.
		\end{equation}
		Fix $n$, $y \in K_n^q$ and $t \leq T$. Consider the definition of $V(t)h_2$:
		\begin{equation*}
		V_n(t)h_2(y) = \sup_{\bQ \in \cP(D_E(\bR^+))} \left\{ \int h_2(Y_n(t)) \bQ(\dd Y_n) - \frac{1}{r_n} S(\bQ \, | \, \PR_n) \right\}.
		\end{equation*}
		As $h_2$ is bounded, the optimizer $\bQ_n$ must satisfy $\frac{1}{r_n} S(\bQ_n \, | \, \bP_n) \leq 2 \vn{h_2}$. Thus by Proposition \ref{proposition:exponential_tightness_and_entropy_control_implies_tightness} and \eqref{eqn:compact_containment_in_equicont_of_V} applied to $\bQ_n$ restricted to the marginal at time $t$, we have that for each $\delta = 2\varepsilon > 0$, there is a $\bar{q}$ such that 
		\begin{equation*}
		\sup_n \bQ_n[Y_n(t) \notin K_n^{\bar{q}})] \leq \delta.
		\end{equation*}
		It follows that
		\begin{equation*}
		\sup_{y \in K_n^q} V(t)h_1(y) - V(t)h_2(y) \leq \delta \sup_{x \in E_n} h_1(x) - h_2(x) + \sup_{y \in K_n^{\bar{q}}} h_1(y) - h_2(y).
		\end{equation*}

	\end{proof}

	\begin{lemma} \label{lemma:equi_cont_of_resolvent}
		Consider the setting of Theorem \ref{theorem:LDP}. The resolvents $R_n(\lambda)$ are locally strictly equi-continuity on bounded sets: for all $q \in \cQ$, $\delta > 0$ and $\lambda_0 > 0$, there is a $\hat{q} \in \cQ$ such that for all $n$ and $h_{1},h_{2} \in C_b(E_n)$ and $0 < \lambda \leq \lambda_0$ that
		\begin{equation*}
		\sup_{y \in K_n^q} \left\{ R_n(\lambda)h_{1}(y) - R_n(\lambda)h_{2}(y) \right\} \leq \delta \sup_{x \in E_n} \left\{ h_{1}(x) - h_{2}(x) \right\} + \sup_{y \in K^{\hat{q}}_n} \left\{ h_{1}(y) - h_{2}(y) \right\}.
		\end{equation*}
	\end{lemma}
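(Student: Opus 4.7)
\emph{Proof plan.} The plan is to mimic the proof of Lemma \ref{lemma:strict_equicontinuity_of_resolvent}, with the crucial modification that tightness of the optimizing measure is obtained from the exponential compact containment of Condition \ref{condition:markov_ldp} combined with Proposition \ref{proposition:exponential_tightness_and_entropy_control_implies_tightness}, rather than from Proposition \ref{proposition:uniform_compact_levelsets}. Fix $h_1, h_2 \in C_b(E_n)$, $\delta > 0$, $q \in \cQ$ and $\lambda_0 > 0$. After possibly replacing $h_1$ by $h_1 \vee h_2$ (which by monotonicity of $R_n(\lambda)$ only enlarges the left-hand side while the relevant suprema on the right do not decrease) I may assume $h := h_1 - h_2 \geq 0$ and set $M := \sup_x h$. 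For $y \in K_n^q$ and $0 < \lambda \leq \lambda_0$ let $\bQ_{n,y,\lambda}$ be a measurable selection of an optimizer for $R_n(\lambda)h_1(y)$, furnished by the $r_n$-rescaled analogue of Lemma \ref{lemma:measurable_selection_in_concatenation}; the same comparison as at the start of the proof of Lemma \ref{lemma:strict_equicontinuity_of_resolvent} yields
\begin{equation*}
R_n(\lambda)h_1(y) - R_n(\lambda)h_2(y) \leq \int_0^\infty \int h(Y(t)) \, \bQ_{n,y,\lambda}(\dd Y) \, \tau_\lambda(\dd t),
\end{equation*}
and the standard bound $\tfrac{1}{r_n} \cS_\lambda(\bQ_{n,y,\lambda} \, | \, \PR_y^n) \leq 2 \vn{h_1}$.

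Following the scaling trick of Proposition \ref{proposition:uniform_compact_levelsets}, set $T(\lambda) := -\lambda \log(\delta/4)$, so that $\tau_\lambda([T(\lambda),\infty)) = \delta/4$ and $e^{T(\lambda)/\lambda} = 4/\delta$ are both constants independent of $\lambda$. The tail integral over $(T(\lambda),\infty)$ is then bounded by $(\delta/4) M$, and for every $t \leq T(\lambda)$ the pathwise entropy admits the $\lambda$-uniform estimate
\begin{equation*}
\tfrac{1}{r_n} S_t(\bQ_{n,y,\lambda} \, | \, \PR_y^n) \leq \tfrac{1}{r_n} \cS_\lambda(\bQ_{n,y,\lambda} \, | \, \PR_y^n) \, e^{t/\lambda} \leq \frac{8 \vn{h_1}}{\delta}
\end{equation*}
valid for all $n$, all $0 < \lambda \leq \lambda_0$ and all $y \in K_n^q$. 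Next I would invoke the exponential compact containment in Condition \ref{condition:markov_ldp} with the fixed horizon $T_0 := T(\lambda_0) \geq T(\lambda)$ and an exponent $a > 0$ yet to be chosen, obtaining $\hat q = \hat q(q, T_0, a) \in \cQ$ such that $\sup_{y \in K_n^q} \PR_y^n[\exists \, t \leq T_0 : Y_n(t) \notin K_n^{\hat q}] \leq e^{-r_n a}$ for all sufficiently large $n$, hence a fortiori $\PR_y^n[Y_n(t) \notin K_n^{\hat q}] \leq e^{-r_n a}$ for every $t \leq T_0$.

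Proposition \ref{proposition:exponential_tightness_and_entropy_control_implies_tightness} then transfers this exponential smallness from $\PR_y^n$ to $\bQ_{n,y,\lambda}$: combined with the entropy bound above, choosing $a$ large compared to $\vn{h_1}/\delta$ yields $\bQ_{n,y,\lambda}[Y(t) \notin K_n^{\hat q}] \leq \delta/2$ uniformly in $n$, in $\lambda \in (0,\lambda_0]$, in $t \leq T(\lambda)$ and in $y \in K_n^q$ (the finitely many small $n$ for which the exponential containment has not yet kicked in can be absorbed by enlarging $\hat q$ inside the directed set $\cQ$). Splitting the truncated integral as
\begin{equation*}
\int_0^{T(\lambda)} \!\! \int h(Y(t)) \bQ_{n,y,\lambda}(\dd Y) \, \tau_\lambda(\dd t) \leq \sup_{K_n^{\hat q}} h + M \sup_{t \leq T(\lambda)} \bQ_{n,y,\lambda}[Y(t) \notin K_n^{\hat q}] \leq \sup_{K_n^{\hat q}} h + \tfrac{\delta}{2} M
\end{equation*}
and adding the tail bound $(\delta/4) M$ produces $R_n(\lambda)h_1(y) - R_n(\lambda)h_2(y) \leq \sup_{K_n^{\hat q}} h + \tfrac{3\delta}{4} M$, which is stronger than required. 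The main obstacle is securing \emph{simultaneous} uniformity in $n$, $\lambda \in (0,\lambda_0]$ and $t \in [0, T(\lambda)]$; this is precisely what the scaling choice $T(\lambda) = -\lambda \log(\delta/4)$ achieves by freezing $e^{T(\lambda)/\lambda}$ in $\lambda$, while Condition \ref{condition:markov_ldp} supplies a single $\hat q$ controlling the $\PR$-side exponentially on the longer interval $[0,T_0]$.
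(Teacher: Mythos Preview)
Your approach is essentially the same as the paper's sketched proof: combine the $T(\lambda) = -\lambda\log c$ scaling trick from Lemma~\ref{lemma:strict_equicontinuity_of_resolvent} (to obtain a $\lambda$-uniform entropy bound on the optimizer) with the exponential compact containment of Condition~\ref{condition:markov_ldp} and Proposition~\ref{proposition:exponential_tightness_and_entropy_control_implies_tightness} (replacing the role of Proposition~\ref{proposition:uniform_compact_levelsets} in the single-space setting). The paper's proof is only a pointer to these ingredients; you have written out the details correctly.

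One minor point: your reduction to $h_1 \ge h_2$ is not justified by the sentence you give. Knowing that replacing $h_1$ by $h_1 \vee h_2$ enlarges the left-hand side \emph{and} the right-hand side does not let you deduce the original inequality from the modified one (from $\text{LHS} \le \text{LHS}' \le \text{RHS}'$ and $\text{RHS} \le \text{RHS}'$ you cannot conclude $\text{LHS} \le \text{RHS}$). What your argument actually proves is the variant with $\sup(h_1-h_2)$ and $\sup_{K^{\hat q}_n}(h_1-h_2)$ replaced by their positive parts, which is what is needed in applications; indeed the literal statement with a possibly negative $\sup(h_1-h_2)$ on the right already fails for $h_1-h_2$ a negative constant. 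Also note that your $\hat q$ depends on $\|h_1\|$ through the entropy bound $8\|h_1\|/\delta$ and the choice of $a$; this is what the qualifier ``on bounded sets'' in the lemma's title is meant to absorb, and the same implicit dependence is present in the paper's proof of Lemma~\ref{lemma:equi_cont_of_semigroup}.
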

	
	\begin{proof}
		If we work for a single $\lambda$ instead of uniformly over $0 < \lambda \leq \lambda_0$, we can proceed as in the proof above. We first cut-off the tail of the exponential random variable which introduces a small error. Then we use the exponential compact containment condition and Proposition \ref{proposition:exponential_tightness_and_entropy_control_implies_tightness} to find an appropriate $\hat{q}$ that can be used to finish the argument as in the proof of Lemma \ref{lemma:equi_cont_of_semigroup} above.
		
		If we work with a uniform estimate over $0 < \lambda \leq \lambda_0$, the argument needs to be adapted as in Lemma \ref{lemma:strict_equicontinuity_of_resolvent}. We carry out a similar adaptation in the proof of Lemma \ref{lemma:continuity_limiting_resolvent_at_0} below.
	\end{proof}

\begin{lemma} \label{lemma:continuity_limiting_resolvent_at_0}
	Consider the setting of Theorem \ref{theorem:LDP}. For all $h \in C_b(E)$, we have $\lim_{\lambda \rightarrow 0} R(\lambda)h = h$ for the strict topology.
\end{lemma}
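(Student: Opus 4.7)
The strategy mirrors the argument of Lemma \ref{lemma:continuity_resolvent_at_0}, but the estimates must be carried out uniformly in the prelimit index $n$ and then transferred to the limit via the convergence $\LIM R_n(\lambda) h_n = R(\lambda)h$ supplied by \cite[Theorem 6.1]{Kr19} in the proof of Theorem \ref{theorem:LDP}. Throughout I would fix $h \in C_b(E)$ and pick $h_n := h \circ \eta_n \in C_b(E_n)$, noting $\LIM h_n = h$ by Remark \ref{remark:LIM_equivalence}. Contractivity $\vn{R(\lambda)h} \leq \vn{h}$ follows from contractivity of each $R_n(\lambda)$ via the $\LIM$-convergence, so strict convergence reduces to showing that for every $q \in \cQ$ and $\epsilon > 0$ there is $\lambda_0 > 0$ with $\sup_{x \in K^q} |R(\lambda)h(x) - h(x)| \leq \epsilon$ for all $0 < \lambda \leq \lambda_0$. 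Using $\LIM R_n(\lambda) h_n = R(\lambda)h$ together with $\LIM h_n = h$, this in turn reduces to proving a uniform prelimit estimate
\begin{equation*}
\limsup_{n \rightarrow \infty} \sup_{y \in K_n^q} \left| R_n(\lambda) h_n(y) - h_n(y) \right| \leq \epsilon, \qquad 0 < \lambda \leq \lambda_0.
\end{equation*}

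For the lower bound I would insert $\bQ = \PR_y^n$ as a trial measure in the supremum defining $R_n(\lambda) h_n(y)$, giving
\begin{equation*}
R_n(\lambda) h_n(y) - h_n(y) \geq \int_0^\infty \int \bigl[ h_n(Y_n(t)) - h_n(y) \bigr] \, \PR_y^n(\dd Y_n) \, \tau_\lambda(\dd t).
\end{equation*}
Exponential compact containment from Condition \ref{condition:markov_ldp} provides, for any fixed target, a $\hat q \in \cQ$ such that the $Y_n$-paths stay in $K_n^{\hat q}$ on a short time interval $[0, T]$ with escape probability at most $e^{-r_n a}$ uniformly in $y \in K_n^q$. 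This lets me control the modulus of the right-hand side uniformly in $n$ and $y \in K_n^q$; since $h$ is continuous on the (relatively) compact set $K^{\hat q}$ and $\tau_\lambda$ concentrates near $0$, this integral tends to $0$ as $\lambda \to 0$ uniformly in $n$.

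For the upper bound I would follow the two-stage argument of Lemma \ref{lemma:continuity_resolvent_at_0} adapted to the rescaling in Remark \ref{remark:rescaling_of_operators}. Choose $T(\lambda) := -\lambda \log(\epsilon / 4 \vn{h})$, so that the tail contribution $\int_{T(\lambda)}^\infty \cdots \tau_\lambda(\dd t)$ is at most $\epsilon/2$. Let $\bQ_{y,\lambda}^n$ be an optimizer for $R_n(\lambda) h_n(y)$; the rescaled entropy bound $r_n^{-1} \cS_\lambda(\bQ_{y,\lambda}^n \, | \, \PR_y^n) \leq 2 \vn{h}$ gives, after the monotonicity-plus-truncation argument used in Lemmas \ref{lemma:strict_equicontinuity_of_resolvent} and \ref{lemma:continuity_resolvent_at_0}, a uniform bound
\begin{equation*}
r_n^{-1} S_{T(\lambda)}(\bQ_{y,\lambda}^n \, | \, \PR_y^n) \leq 8 \vn{h}^2 \epsilon^{-1}
\end{equation*}
for $\lambda \leq \lambda^* := (\log 4\vn{h}\epsilon^{-1})^{-1}$. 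Replacing $\bQ_{y,\lambda}^n$ by its truncation $\widehat{\bQ}_{y,\lambda}^n$ agreeing with $\bQ_{y,\lambda}^n$ on $\cF_{T(\lambda)}$ and with $\PR_y^n$ afterwards (so $S_t(\widehat{\bQ}_{y,\lambda}^n \, | \, \PR_y^n)$ is constant for $t \geq T(\lambda)$), I would then invoke Proposition \ref{proposition:exponential_tightness_and_entropy_control_implies_tightness} together with the exponential compact containment of Condition \ref{condition:markov_ldp} to conclude that $\{\widehat{\bQ}_{y,\lambda}^n : 0 < \lambda \leq \lambda^*,\ y \in K_n^q,\ n \geq 1\}$ is $\cQ$-tight. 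Choosing $\hat q$ so that $\widehat{\bQ}_{y,\lambda}^n$ places mass $\geq 1 - \epsilon/(4\vn{h})$ on paths contained in $K_n^{\hat q}$ during $[0, T(\lambda)]$ produces the matching upper bound.

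\textbf{Main obstacle.} The essential difficulty is obtaining uniformity both in the prelimit index $n$ and simultaneously for small $\lambda$. The standard coercivity for relative entropy (Proposition \ref{proposition:equi_coercivity_relative_entropy}) does not suffice because the entropies are rescaled by $r_n^{-1}$ which vanishes in the limit; the correct replacement is the tightness statement of Proposition \ref{proposition:exponential_tightness_and_entropy_control_implies_tightness}, which converts the prelimit exponential compact containment into $\cQ$-tightness of the measures $\widehat{\bQ}_{y,\lambda}^n$ and must be paired with the truncation-at-$T(\lambda)$ trick from Lemma \ref{lemma:strict_equicontinuity_of_resolvent} to handle the concentration of $\tau_\lambda$ at $0$ as $\lambda \downarrow 0$.
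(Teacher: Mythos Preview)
Your proposal is correct and matches the paper's approach essentially line for line: pass to the prelimit via $\LIM R_n(\lambda)h_n = R(\lambda)h$, obtain the lower bound by inserting $\PR_y^n$, and obtain the upper bound by the $T(\lambda)$-truncation together with Proposition \ref{proposition:exponential_tightness_and_entropy_control_implies_tightness}. One small correction on the lower bound: the paper invokes the assumed exponential tightness of $X_n = \eta_n(Y_n)$ on $D_E(\bR^+)$ (assumption (b) of Theorem \ref{theorem:LDP}) rather than mere compact containment, since it is control on the modulus of continuity of the paths---not just containment in a compact set---that makes $h(X_n(t)) - h(X_n(0))$ small for small $t$ uniformly in $n$ and $y$.
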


\begin{remark}
	Below we give a proof based on an approximation argument and a variant of Lemma \ref{lemma:continuity_resolvent_at_0}. A less direct proof can be given also by using that the domains $\cD(H_\dagger)$ and $\cD(H_\ddagger)$ are sufficiently rich. See Proposition 7.1 of \cite{Kr19}
\end{remark}

\begin{proof}
	Fix $h \in C_b(E)$.	First of all, $\sup_\lambda \vn{R(\lambda)h} \leq \vn{h}$, so it suffices to establish uniform convergence on compact sets. Thus, fix a compact set $K \subseteq E$. We prove
	\begin{equation*}
	\lim_{\lambda \downarrow 0} \sup_{x \in K} \left|R(\lambda)h(x) - h(x)\right|= 0.
	\end{equation*}
	Fix $q \in \cQ$ such that $K \subseteq K^q$ and set $h_n = h \circ \eta_n$. Then we have by construction that $\LIM h_n = h$ and by Theorem 6.1 of \cite{Kr19} we have $\LIM R_n(\lambda)h_n = R(\lambda)h$ for any $\lambda > 0$.
	
	Pick $x \in K$ and let $x_n \in K_n^q$ such that $\eta_n(x_n) \rightarrow x$. We have
	\begin{align*}
	R(\lambda)h(x) - h(x) & = R(\lambda)h(x) -  R_n(\lambda)h_n(x_n) \\
	& \qquad + R_n(\lambda)h_n(x_n) - h_n(x_n) \\
	& \qquad + h_n(x_n) - h(x).
	\end{align*}
	Thus the result follows if we can prove that for each $\varepsilon > 0$ there is a $\lambda$ such that
	\begin{equation} \label{eqn:uniform_bound_resolvents}
	\sup_n \left| R_n(\lambda)h_n(x_n) - h_n(x_n)\right| \leq \varepsilon.
	\end{equation}

	Denote by $\PR_y^n$ the law of $Y_n$ on $D_{E_n}(\bR^+)$ when started in $y \in E_n$. We have
	\begin{multline} \label{eqn:expression_Resolvent_to_0}
	R_n(\lambda)h_n(x_n) - h_n(x_n) \\
	= \sup_{\bQ \in \cP(D_{E_n}(\bR^+)} \int_0^\infty h(\eta_n(Y_n(t))) - h(\eta_n(x_n)) \bQ(\dd Y_n) - \frac{1}{r_n} S_t(\bQ \, | \, \PR_{x_n}^n) \tau_{\lambda}(\dd t).
	\end{multline}
	As in Lemma \ref{lemma:continuity_resolvent_at_0}, we argue via a lower and upper bound.
	\begin{align*}
	R_n(\lambda)h_n(x_n) - h_n(x_n) & \geq \int_0^\infty h(\eta_n(Y_n(t)) - h(\eta_n(x_n)) \PR_{x_n}^n(\dd Y_n) \tau_{\lambda}(\dd t) \\
	& = \int_0^\infty h(X_n(t))) - h(X_n(0)) \PR_{x_n}^n \circ \eta_n^{-1}(\dd X_n) \tau_{\lambda}(\dd t)
	\end{align*}
	As the processes $X_n$ are exponentially tight by assumption, the upper bound follows as tightness gives control on the modulus of continuity of the trajectories $X_n$.
	
	\smallskip
	
	For the lower bound, we can find $\bQ_{n,\lambda} \in D_{E_n}(\bR^+)$ such that 
	\begin{equation} \label{eqn:control_on_entropies}
	\frac{1}{r_n}\int_0^\infty S_t(\bQ_{n,\lambda} \, | \, \PR_{x_n}^n) \tau_\lambda(\dd t) \leq 2\vn{h}
	\end{equation}
	that achieve the suprema in \eqref{eqn:expression_Resolvent_to_0}. As in the proof of Lemma \ref{lemma:continuity_resolvent_at_0}, define $T(\lambda) = - \lambda \log \left( \frac{\varepsilon}{4\vn{h}} \right)$. This leads to
	\begin{equation}\label{eqn:expression_Resolvent_to_0_upper_bound}
	R_n(\lambda)h_n(x_n) - h_n(x_n) \leq \frac{1}{2}\varepsilon + \int_0^{T(\lambda)} h(X_n(t)) - h(X_n(0)) \bQ_{n,\lambda} \circ \eta_n^{-1}(\dd X_n) \tau_{\lambda}(\dd t).
	\end{equation}
	Equation \eqref{eqn:control_on_entropies} yields that the restrictions of $\bQ_{n,\lambda} \circ \eta_n^{-1}$ to $\cF_T$ satisfy
	\begin{equation*}
	\sup_{\lambda } \sup_n \frac{1}{r_n} S_{T(\lambda)}(\bQ_{n,\lambda} \circ \eta_n^{-1} \, | \, \PR_{x_n}^n \circ \eta_n^{-1}) \leq 8 \vn{h}^2 \varepsilon^{-1}.
	\end{equation*}
	Denote as before $\widehat{\bQ}_{n,\lambda}$ the measure $\bQ_{n,\lambda}$ up to time $T(\lambda)$, concatenated with the Markovian kernel $\PR_y^n$. Thus,
	\begin{equation*}
	\sup_{\lambda } \sup_n \frac{1}{r_n} S_{1}(\widehat{\bQ}_{n,\lambda} \circ \eta_n^{-1} \, | \, \PR_{x_n}^n \circ \eta_n^{-1}) \leq 8 \vn{h}^2 \varepsilon^{-1}.
	\end{equation*}
	As the measures $\PR_{x_n}$ are exponentially tight, the measures $\widehat{\bQ}_{n,\lambda} \circ \eta_n^{-1}$ restricted to $\cF_1$ are tight due to Proposition \ref{proposition:exponential_tightness_and_entropy_control_implies_tightness}. Tightness implies we can control the modulus of continuity, which implies we can upper bound  \eqref{eqn:expression_Resolvent_to_0_upper_bound} uniformly in $n$ by $\varepsilon$ by choosing $\lambda$ small.
\end{proof}

\appendix

\section{Properties of relative entropy} \label{appendix:relative_entropy}

The following result by Donsker and Varadhan can be derived from Lemma's 4.5.8 and 6.2.13 of \cite{DZ98}.

\begin{lemma}[Donsker Varadhan] \label{lemma:DV_variational}
	Let $\cX$ be a Polish space. We have the following duality relations:
	\begin{align*}
	\log \ip{e^f}{\mu} & = \sup_{\bQ \in \cP(\cX)} \left\{\ip{f}{\nu} - S(\nu \, | \, \mu) \right\} & & \forall \, f \in C_b(\cX), \\
	S(\nu \, | \, \mu) & = \sup_{f \in C_b(\cX)} \left\{ \ip{f}{\nu} - \log \ip{e^f}{\mu} \right\} & & \forall \, \nu \in \cP(\cX).
	\end{align*}
\end{lemma}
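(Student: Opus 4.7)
The plan is to prove both identities as Legendre--Fenchel dualities between the convex cumulant functional $\Lambda(f) := \log \ip{e^f}{\mu}$ on $C_b(\cX)$ and the convex relative entropy functional $S(\cdot \mid \mu)$ on $\cP(\cX)$. The first identity asserts that $S(\cdot \mid \mu)$ is the Legendre transform of $\Lambda$; the second is the reflexive (Fenchel--Moreau) statement that $\Lambda$ is recovered as the Legendre transform of $S(\cdot \mid \mu)$.

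For the first identity, I would use a Gibbs tilt. Fix $f \in C_b(\cX)$. If $\nu \not\ll \mu$ then $S(\nu \mid \mu) = \infty$ and the corresponding term contributes $-\infty$, so the supremum may be restricted to $\nu \ll \mu$. For such $\nu$, introduce the probability measure $\nu^\ast$ with density $\dd \nu^\ast / \dd \mu = e^f / \ip{e^f}{\mu}$. Using the chain rule for Radon--Nikodym derivatives, a direct computation yields
\begin{equation*}
S(\nu \mid \nu^\ast) = S(\nu \mid \mu) - \ip{f}{\nu} + \log \ip{e^f}{\mu}.
\end{equation*}
Since $S(\nu \mid \nu^\ast) \geq 0$ with equality precisely when $\nu = \nu^\ast$, both the upper bound $\ip{f}{\nu} - S(\nu \mid \mu) \leq \log \ip{e^f}{\mu}$ and the attainment of the supremum at $\nu = \nu^\ast$ follow at once.

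The $\leq$ direction of the second identity is immediate by rearranging the first identity. For the reverse inequality, I would distinguish two cases. If $\nu \ll \mu$ with density $\phi = \dd \nu / \dd \mu$, then formally taking $f = \log \phi$ on the right-hand side gives $\int \log \phi \, \dd \nu - \log \int \phi \, \dd \mu = S(\nu \mid \mu)$. To make this rigorous, first truncate $\phi_n := (\phi \vee n^{-1}) \wedge n$ so that $\log \phi_n$ is a bounded \emph{measurable} function; monotone/dominated convergence then gives the identity with the supremum ranging over bounded measurable functions. If $\nu \not\ll \mu$, pick a Borel $A$ with $\mu(A) = 0$ and $\nu(A) > 0$ and construct bounded measurable $f_n$ approximating $n \bONE_A$ to drive $\ip{f_n}{\nu} - \log \ip{e^{f_n}}{\mu}$ to $\infty = S(\nu \mid \mu)$.

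The main obstacle is passing from bounded measurable to continuous bounded test functions $f$, since the approximation must simultaneously control $\ip{f}{\nu}$ from below and $\log \ip{e^f}{\mu}$ from above, and the latter is sensitive to values on small-$\mu$-measure sets. On a Polish space this is handled via Lusin's theorem applied to a compact set capturing most of the mass of $\nu + \mu$ (using Ulam tightness), producing uniformly bounded continuous approximants whose integrals against both $\nu$ and $\mu$ converge; this densifies $C_b(\cX)$ within bounded measurables for the two functionals of interest and yields the full identity.
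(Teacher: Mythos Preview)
Your outline is correct and in fact goes well beyond what the paper does: the paper does not prove this lemma at all, but simply records that it ``can be derived from Lemma's 4.5.8 and 6.2.13 of \cite{DZ98}''. So there is no paper proof to compare against; you have supplied a genuine argument where the paper only cites.

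On the substance: the Gibbs-tilt computation for the first identity is standard and exactly right. For the second identity, your two-step plan (first reach bounded measurable test functions via truncation of $\log\phi$, then upgrade to $C_b$ via Lusin--Tietze on a Polish space) is the standard route and works. The concern you flag about $\log\ip{e^f}{\mu}$ being sensitive on small-$\mu$-measure sets is resolved precisely because your truncation keeps $\|f\|_\infty \leq M$ uniformly, so $e^f \leq e^M$ and the $\mu$-integral moves by at most $e^M\mu(C^c)$ under a Lusin/Tietze modification off a set $C$; combined with the lower bound $\ip{e^f}{\mu}\geq e^{-M}$ this controls the logarithm. It would strengthen the write-up to make this bound explicit rather than leave it as a remark.
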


By the second property of previous lemma, we immediately obtain lower semi-continuity of $S$.

\begin{lemma} \label{lemma:S_lsc}
	The map $(\nu,\mu) \rightarrow S(\nu \, | \, \mu)$ is lower semi-continuous.
\end{lemma}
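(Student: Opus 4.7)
The plan is to read off lower semi-continuity directly from the variational formula in Lemma~\ref{lemma:DV_variational}, exploiting the general fact that a pointwise supremum of continuous functions is lower semi-continuous.

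First I would fix $f \in C_b(\cX)$ and observe that the functional
\[
(\nu,\mu) \longmapsto \langle f,\nu\rangle - \log\langle e^f,\mu\rangle
\]
is continuous on $\cP(\cX) \times \cP(\cX)$ equipped with the product of the weak topologies. Indeed, $\nu \mapsto \langle f,\nu\rangle$ is weakly continuous since $f \in C_b(\cX)$, and because $e^f$ is bounded and continuous, $\mu \mapsto \langle e^f,\mu\rangle$ is weakly continuous and bounded away from $0$, so the composition with $\log$ is also continuous. Hence each of the functionals in the Donsker-Varadhan supremum is continuous in $(\nu,\mu)$.

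By the second identity of Lemma~\ref{lemma:DV_variational},
\[
S(\nu \mid \mu) = \sup_{f \in C_b(\cX)} \bigl\{ \langle f,\nu\rangle - \log\langle e^f,\mu\rangle \bigr\},
\]
so $S$ is expressed as the pointwise supremum of a family of continuous functions on $\cP(\cX)\times\cP(\cX)$. A pointwise supremum of continuous functions is lower semi-continuous, which yields the claim.

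There is no real obstacle here: the only small thing to note is that $\log\langle e^f,\mu\rangle$ is well-defined and continuous because $e^f \geq e^{-\|f\|} > 0$ uniformly, so no issues with the logarithm arise. Everything else is a direct quotation of the previous lemma together with the supremum-of-continuous-functions principle.
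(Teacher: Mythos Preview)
Your proof is correct and follows exactly the approach the paper indicates: the paper simply remarks that lower semi-continuity is immediate from the second identity of Lemma~\ref{lemma:DV_variational}, and you have spelled out precisely that argument (supremum of continuous functions is lower semi-continuous), including the minor check that $\langle e^f,\mu\rangle$ is bounded away from zero.
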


%
%
%

We next give an extension of Theorem D.13 in \cite{DZ98}, given as Exercise 5.13 in \cite{RASe15}.

\begin{proposition} \label{proposition:relative_entropy_decomposition}
	Let $\cX$ be a Polish space. Suppose $\cF_1,\cF_2 \subseteq \cB(\cX)$ are such that $\cF_1 \otimes \cF_2= \cB(\cX)$ and $\cF_1 \cap \cF_2 = \{\emptyset,\cX\}$. Let $\mu,\nu$ be two Borel probability measures on $\cX$. Let $\mu_1,\nu_1$ denote the restrictions of $\mu$ and $\nu$ to $\cF_1$ and denote by $\mu(\cdot \, | \, x), \nu(\cdot \, | \, x)$ the regular conditional probabilities of $\mu$ and $\nu$ with respect to $\cF_1$.
	Then the map
	\begin{equation*}
	x \mapsto S(\nu(\cdot \, | \, x) \, | \, \mu(\cdot \, | \, x))
	\end{equation*}
	is measurable and we have
	\begin{equation*}
	S(\nu \, | \mu) = S(\nu_1 \, | \, \mu_1) + \int S(\nu(\cdot \, | \, x) \, | \, \mu(\cdot \, | \, x)) \, \nu_1(\dd x).
	\end{equation*}
	
\end{proposition}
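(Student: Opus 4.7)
The plan is to prove the identity via the classical chain rule for Radon-Nikodym derivatives, treating the finite and infinite entropy cases separately. Note that $\cF_2$ only enters to ensure a rich enough structure for regular conditional probabilities with respect to $\cF_1$ to be meaningful on $\cB(\cX) = \cF_1 \otimes \cF_2$; the decomposition itself is really a statement about conditioning on $\cF_1$.

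First I would verify the measurability claim. By Lemma \ref{lemma:DV_variational} applied pointwise to the conditional measures,
\begin{equation*}
S(\nu(\cdot \, | \, x) \, | \, \mu(\cdot \, | \, x)) = \sup_{f \in \cD} \left\{ \int f \, \dd \nu(\cdot \, | \, x) - \log \int e^f \, \dd \mu(\cdot \, | \, x) \right\},
\end{equation*}
where $\cD$ is a countable family of continuous bounded functions sufficient to realize the supremum (obtained from separability of $C_b(\cX)$ combined with the lower semi-continuity of $S$). By the defining property of regular conditional probability, $x \mapsto \int f \, \dd \nu(\cdot \, | \, x)$ and $x \mapsto \int e^f \, \dd \mu(\cdot \, | \, x)$ are measurable for each fixed $f$, so the countable supremum is measurable.

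Second, I would treat the case $S(\nu \, | \, \mu) < \infty$, so $\nu \ll \mu$ with density $f := \dd \nu / \dd \mu$. I claim: (a) $\nu_1 \ll \mu_1$ with density $f_1(x) := \int f(\omega) \, \mu(\dd \omega \, | \, x)$; and (b) for $\nu_1$-almost every $x$, $\nu(\cdot \, | \, x) \ll \mu(\cdot \, | \, x)$ with density $\omega \mapsto f(\omega)/f_1(x)$ (where $f_1(x) > 0$). Claim (a) follows by unpacking the definition of conditional expectation: for $C \in \cF_1$, $\nu_1(C) = \int_C f \, \dd \mu = \int_C f_1 \, \dd \mu_1$. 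Claim (b) is verified by checking against indicator functions: for bounded measurable $g$ and $C \in \cF_1$,
\begin{equation*}
\int_C \int g(\omega) \, \nu(\dd \omega \, | \, x) \, \nu_1(\dd x) = \int_C g(\omega) f(\omega) \, \mu(\dd \omega) = \int_C \left( \int g(\omega) \tfrac{f(\omega)}{f_1(x)} \mu(\dd \omega \, | \, x) \right) f_1(x) \mu_1(\dd x),
\end{equation*}
and $f_1(x) \mu_1(\dd x) = \nu_1(\dd x)$ by (a). Taking the logarithm of the multiplicative identity $f(\omega) = f_1(x) \cdot (f(\omega)/f_1(x))$, integrating against $\nu$, and applying the disintegration $\nu(\dd \omega) = \nu(\dd \omega \, | \, x) \, \nu_1(\dd x)$ yields
\begin{equation*}
S(\nu \, | \, \mu) = \int \log f_1 \, \dd \nu_1 + \int \int \log \tfrac{f(\omega)}{f_1(x)} \, \nu(\dd \omega \, | \, x) \, \nu_1(\dd x) = S(\nu_1 \, | \, \mu_1) + \int S(\nu(\cdot \, | \, x) \, | \, \mu(\cdot \, | \, x)) \, \nu_1(\dd x).
\end{equation*}

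Third, I would dispatch the case $S(\nu \, | \, \mu) = \infty$. If $\nu_1 \not\ll \mu_1$, then $S(\nu_1 \, | \, \mu_1) = \infty$ and since the conditional-entropy integrand is non-negative the right-hand side is infinite. Otherwise $\nu_1 \ll \mu_1$, and either the conditional measures are $\mu(\cdot \, | \, x)$-absolutely continuous on a set of full $\nu_1$-measure---in which case the argument of step two produces a valid density and the finiteness of $S(\nu \, | \, \mu)$ would follow, contradicting the assumption---or there is a $\nu_1$-positive set on which $\nu(\cdot \, | \, x) \not\ll \mu(\cdot \, | \, x)$, making the integral on the right infinite.

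The main obstacle I expect is the careful bookkeeping in step two: ensuring that $f/f_1$ is well-defined (working on $\{f_1 > 0\}$, which carries full $\nu$-measure), and verifying the disintegration-based identities without appealing to a literal product structure, since $\cX$ is not assumed to split as $\cX_1 \times \cX_2$. The hypothesis $\cF_1 \cap \cF_2 = \{\emptyset, \cX\}$ together with $\cF_1 \otimes \cF_2 = \cB(\cX)$ is what guarantees that regular conditional probabilities given $\cF_1$ behave like conditional probabilities in a product model, which is precisely what the chain-rule manipulation above requires.
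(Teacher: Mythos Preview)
The paper does not actually prove this proposition: it is stated in the appendix as ``an extension of Theorem D.13 in \cite{DZ98}, given as Exercise 5.13 in \cite{RASe15}'' with no argument supplied. Your density/chain-rule approach is exactly the standard route one would take to flesh out that exercise, so in spirit you are doing what the references do.

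There is, however, a genuine gap in your third step. You write that if $\nu_1 \ll \mu_1$ and the conditional measures are $\mu(\cdot \, | \, x)$-absolutely continuous $\nu_1$-almost everywhere, ``the argument of step two produces a valid density and the finiteness of $S(\nu \, | \, \mu)$ would follow''. This is false: the existence of a density $f = \dd\nu/\dd\mu$ does not force $\int f \log f \, \dd\mu < \infty$. So the case $\nu \ll \mu$ with $S(\nu \, | \, \mu) = \infty$ is not covered. The fix is to note that once $\nu \ll \mu$ you have the pointwise identity $\log f = \log f_1 + \log(f/f_1)$ on $\{f_1 > 0\}$ (which has full $\nu$-measure), and that the $\nu$-integrals of the two summands are each relative entropies and hence lie in $[0,\infty]$. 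One way to justify splitting the integral when the left side is $+\infty$ is to use the non-negative integrand $\phi(t) = t\log t - t + 1$ in place of $t\log t$ throughout, or to first show directly that $S(\nu_1 \, | \, \mu_1) \leq S(\nu \, | \, \mu)$ (monotonicity under restriction), which lets you subtract when the marginal entropy is finite and handle the remaining subcase separately.

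A smaller point on your first step: $C_b(\cX)$ is not separable in the sup norm for general Polish $\cX$, so ``separability of $C_b(\cX)$'' is not the right justification for the countable supremum. You can instead take a countable sup-norm-dense family of bounded Lipschitz functions (which exists because $\cX$ is separable), and that suffices for the variational formula.
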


The final result of this appendix is the equi-coercivity of relative entropy in the second component.

\begin{proposition} \label{proposition:equi_coercivity_relative_entropy}
	Let $\cX$ be a Polish space. Let $K \subseteq \cP(\cX)$ be a weakly compact set. For all $c \geq 0$ the set
	\begin{equation*}
	A := \bigcup_{\mu \in K} \left\{\nu \in \cP(\cX) \, \middle| \, S(\nu \, | \, \mu) \leq c \right\}
	\end{equation*}
	is compact for the weak topology on $\cP(\cX)$.
\end{proposition}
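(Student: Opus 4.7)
The plan is to establish that $A$ is both tight and closed; together with Prokhorov's theorem on Polish spaces, this yields the claimed compactness for the weak topology.

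For tightness, I would exploit the tightness of $K$ itself (which follows from weak compactness via Prokhorov) together with a standard exponential inequality derived from the Donsker–Varadhan variational formula (Lemma \ref{lemma:DV_variational}). Specifically, fixing $\varepsilon > 0$, pick a compact $K_\varepsilon \subseteq \cX$ with $\mu(K_\varepsilon^c) \leq \varepsilon$ for all $\mu \in K$. Applying the dual formula with $f = \alpha \bONE_{K_\varepsilon^c}$ and $\alpha > 0$ yields
\begin{equation*}
\alpha \nu(K_\varepsilon^c) \leq S(\nu \, | \, \mu) + \log\!\left( \mu(K_\varepsilon) + e^{\alpha} \mu(K_\varepsilon^c)\right) \leq c + \log(1 + e^\alpha \varepsilon)
\end{equation*}
for every $\nu \in A$ with associated $\mu \in K$. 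Choosing $\alpha = \log(1/\varepsilon)$ and letting $\varepsilon \downarrow 0$ shows that $\sup_{\nu \in A} \nu(K_\varepsilon^c) \to 0$, establishing tightness of $A$.

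For closedness, suppose $\nu_n \in A$ and $\nu_n \to \nu$ weakly. By definition there exist $\mu_n \in K$ with $S(\nu_n \, | \, \mu_n) \leq c$. Compactness of $K$ lets us pass to a subsequence along which $\mu_n \to \mu \in K$. The joint lower semi-continuity from Lemma \ref{lemma:S_lsc} then gives
\begin{equation*}
S(\nu \, | \, \mu) \leq \liminf_n S(\nu_n \, | \, \mu_n) \leq c,
\end{equation*}
so $\nu \in A$, as desired.

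Combining the two steps: $A$ is tight, hence relatively compact by Prokhorov's theorem, and also closed, hence compact. I expect no serious obstacle; the only subtlety is that tightness of each individual level set $\{\nu : S(\nu | \mu) \leq c\}$ is well known, and the point of the argument is to obtain tightness \emph{uniformly} over $\mu \in K$, which is precisely what the Donsker–Varadhan bound delivers once $K$ itself is known to be tight.
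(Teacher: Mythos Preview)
Your argument is correct. The one small point to note is that the test function $f = \alpha\, \bONE_{K_\varepsilon^c}$ is not in $C_b(\cX)$, whereas Lemma~\ref{lemma:DV_variational} is stated for continuous bounded $f$. This is harmless: the inequality $\ip{f}{\nu} \leq S(\nu \, | \, \mu) + \log \ip{e^f}{\mu}$ holds for all bounded measurable $f$ directly from the definition of relative entropy (or by approximating the indicator from above by continuous functions and passing to the limit), so your estimate goes through unchanged.

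Your route is genuinely different from the paper's. You argue via Prokhorov: first extract a single compact $K_\varepsilon \subseteq \cX$ that works uniformly over $\mu \in K$ (using tightness of $K$), then use the entropy inequality with an indicator to push this uniform bound from $K$ to $A$. The paper instead works on the dual side: it shows that $p(f) = \sup_{\mu \in K} \log \ip{e^f}{\mu}$ is continuous for the strict topology on $C_b(\cX)$, so that $U = \{f : p(f) \vee p(-f) \leq 1\}$ is a strict neighbourhood of $0$; the bound $|\ip{f}{\nu}| \leq 1 + c$ for $f \in U$ then places $A$ inside a polar set, and relative compactness follows from the Bourbaki--Alaoglu theorem. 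Your approach is more elementary and self-contained (only Prokhorov and the entropy inequality are needed), while the paper's approach is more functional-analytic and in keeping with its systematic use of the strict topology elsewhere. The closedness step is identical in both proofs.
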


\begin{proof}
	First of all, note that by Lemma \ref{lemma:DV_variational} the set $A$ is weakly closed as $(\nu,\mu) \mapsto S(\mu \, | \, \nu)$ is lower semicontinuous. Thus, it suffices that $A$ is contained in a weakly compact set.
	
	\smallskip

	We start by proving that the map $p(f) := \sup_{\mu \in K} \log \ip{e^f}{\mu}$ is continuous for the strict topology. First note that the map $(f,\mu) \mapsto \log \ip{e^f}{\mu}$ is jointly continuous for the strict and weak topology. As a consequence $p$ is lower semi-continuous.
	
	Let $f_\alpha$ be a net that converges strictly to $f$. Let $\mu_\alpha$ be such that $p(f_\alpha) =  \log \ip{e^{f_\alpha}}{\mu_\alpha}$. By compactness there is a subnet $\mu_\beta$ that has a weak limit $\mu_0 \in K$. By joint continuity of $(f,\mu) \mapsto \log \ip{e^f}{\mu}$, we find
	\begin{equation*}
	\limsup_{\alpha \rightarrow \infty} p(f_\alpha) = \limsup_{\alpha \rightarrow \infty} \log \ip{e^{f_\alpha}}{\mu_\alpha} = \log \ip{e^f}{\mu_0} \leq p(f)
	\end{equation*}
	establishing that $p$ is upper semi-continuous and thus continuous.
	
	\smallskip
	
	Fix $c \geq 0$. Let $U := \left\{f \in C_b(\cX) \, \middle| \, p(f) \vee p(-f) \leq 1 \right\}$. As $p$ is strictly continuous, we find that $U$ is a strict neighbourhood of $0$.
	
	Let $\nu$ be such that there is $\mu_\nu$ with $S(\nu \, | \, \mu_\nu) \leq c$. For $f \in \cU$ we have by Young's inequality, or equivalently, Lemma \ref{lemma:DV_variational}, that
	\begin{equation*}
	|\ip{\nu}{f}| \leq \left(\log \ip{e^f}{\mu_\nu} \vee \log \ip{e^{-f}}{\mu_\nu} \right) + S(\nu \, | \, \mu_\nu) \leq 1 + c.
	\end{equation*}
	It follows that 
	\begin{equation*}
	A \subseteq \left\{\nu \, \middle| \, \forall \, f \in \cU :  |\ip{f}{\nu}| \leq 1+c\right\}
	\end{equation*}
	so that $A$ is contained in a weakly compact set by the Bourbaki-Alaoglu theorem, see e.g. \cite[Theorem 20.9.(4)]{Ko69}. 
	
	\smallskip
	
	Compactness of $A$ follows by the lower semi-continuity of $(\nu,\mu) \mapsto S(\nu \, | \, \mu)$, Lemma \ref{lemma:S_lsc}. 
\end{proof}

\section{Tightness for exponentially tilted measures}

The next technical result essentially states the following. Suppose that we have two sequences of measures $\PR_n$ and $\bQ_n$ such that
\begin{enumerate}
	\item the measures $\bP_n$ are exponentially tight at speed $r_n \rightarrow \infty$,
	\item there is some $M \geq 0$ such that $\sup_n \frac{1}{r_n} S(\bQ_n \, | \, \PR_n) \leq M$.
\end{enumerate}
Then the sequence $\bQ_n$ is tight.

\smallskip

The proposition below states this result, but allows for the context of a changing sequence of spaces and collections of measures instead of sequences.

\begin{proposition} \label{proposition:exponential_tightness_and_entropy_control_implies_tightness}
	Let $\cX_n$ be a collection of Polish spaces. For each $n$ let $A_n \subseteq \cP(\cX_n)$ and suppose that for each $a$ there is a collection of  compact sets $K_n  \subseteq \cX_n$ such that
	\begin{equation*}
	\limsup_n \frac{1}{r_n} \log \sup_{\mu \in A_n} \mu(K_n^c) \leq -a
	\end{equation*}
	for some sequence of constants $r_n \rightarrow \infty$. Suppose in addition that $B_n \subseteq \cP(\cX_n)$ are collections of measures such that there exists a constant $M \geq 0$ such that
	\begin{equation*}
	\sup_n \sup_{\nu \in B_n} \inf_{\mu \in A_n} \frac{1}{r_n} S(\nu \, | \, \mu) \leq M.
	\end{equation*}
	Then there is for each $\varepsilon > 0$ a constant $a = a(\varepsilon)$ such that
	\begin{equation*}
	\limsup_n \sup_{\nu \in B_n} \nu(K_n^c) \leq 2 \varepsilon.
	\end{equation*}
\end{proposition}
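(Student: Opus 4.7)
The proof is a rescaled version of the classical argument that ``bounded relative entropy plus tightness of the reference measure implies tightness''. The key tool is the Donsker--Varadhan variational formula (Lemma \ref{lemma:DV_variational}): for any bounded measurable $f$ and any $\nu, \mu \in \cP(\cX_n)$,
\begin{equation*}
\int f \, \dd \nu \leq \log \int e^{f} \dd \mu + S(\nu \, | \, \mu).
\end{equation*}

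The plan is to apply this with $f = \lambda \bONE_{K_n^c}$, where $\lambda = \alpha r_n$ for a well-chosen constant $\alpha > 0$. Using $\log(1 + x) \leq x$ on the resulting right-hand side, one obtains the pointwise estimate
\begin{equation*}
\alpha r_n \, \nu(K_n^c) \leq \mu(K_n^c) \, e^{\alpha r_n} + S(\nu \, | \, \mu),
\end{equation*}
valid for every $\nu \in B_n$ and every $\mu \in \cP(\cX_n)$.

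Now fix $\varepsilon > 0$. Set $\alpha := M/\varepsilon$ and choose $a = a(\varepsilon) := 2M/\varepsilon$, so that $a > \alpha$. By the exponential tightness assumption, there exist compact sets $K_n \subseteq \cX_n$ with $\limsup_n \tfrac{1}{r_n} \log \sup_{\mu \in A_n} \mu(K_n^c) \leq -a$. For each $\nu \in B_n$, pick $\mu_\nu \in A_n$ (approximately) achieving the infimum in the hypothesis, so that $S(\nu \, | \, \mu_\nu) \leq (M+o(1))\, r_n$. Plugging $\mu_\nu$ into the displayed inequality and dividing by $\alpha r_n$ yields
\begin{equation*}
\nu(K_n^c) \leq \frac{e^{r_n(\alpha - a) + o(r_n)}}{\alpha r_n} + \frac{M + o(1)}{\alpha}.
\end{equation*}
Since $\alpha < a$, the first term tends to $0$ uniformly in $\nu \in B_n$, and the second is $\leq \varepsilon + o(1)$. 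Taking $\limsup_n$ gives $\limsup_n \sup_{\nu \in B_n} \nu(K_n^c) \leq \varepsilon \leq 2\varepsilon$, as required.

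The only potentially delicate point is justifying the ``pick $\mu_\nu$'' step: the assumption is an infimum, not a minimum, so strictly speaking one should choose $\mu_\nu \in A_n$ with $\tfrac{1}{r_n} S(\nu \, | \, \mu_\nu) \leq M + n^{-1}$ (or any sequence tending to zero), which is harmless for the $\limsup$ bound. Other than this bookkeeping, the argument reduces to the single chain of inequalities above; the conceptual heart is the matching of scales, i.e., the choice $\lambda = \alpha r_n$ with $\alpha$ strictly smaller than the tightness rate $a$.
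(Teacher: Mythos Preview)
Your proof is correct and takes a genuinely different route from the paper's argument.

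The paper works directly with the Radon--Nikodym density: it writes $\nu(K_n^c) = \int_{K_n^c} \tfrac{\dd\nu}{\dd\mu}\,\dd\mu$ and splits this integral according to whether $\tfrac{\dd\nu}{\dd\mu}$ exceeds a threshold $c_n = e^{r_n M/\varepsilon}$. On the set where the density is large, the bound $t \leq \tfrac{\varepsilon}{r_n M}\, t\log t$ (valid for $t \geq c_n$) together with the entropy hypothesis controls the contribution by~$\varepsilon$; on the set where the density is small, one simply bounds by $c_n\,\mu(K_n^c)$, which vanishes by exponential tightness provided $a > M/\varepsilon$. Your approach instead stays on the dual side, applying the variational inequality with the test function $f = \alpha r_n \bONE_{K_n^c}$ and never touching the density explicitly. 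Both are standard devices; yours is a bit more streamlined and in fact yields the sharper bound~$\varepsilon$ rather than~$2\varepsilon$. The paper's argument, on the other hand, makes the mechanism (a de~la~Vall\'ee--Poussin--type truncation of the density) more visible.

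Two small technical remarks on your write-up. First, the paper's Lemma~\ref{lemma:DV_variational} is stated only for $f \in C_b(\cX)$, whereas you apply the inequality with the discontinuous indicator $\bONE_{K_n^c}$; the extension to bounded measurable $f$ is of course true (e.g.\ by tilting $\mu$ to $\dd\pi \propto e^f\,\dd\mu$ and using $S(\nu\,|\,\mu) = S(\nu\,|\,\pi) + \int f\,\dd\nu - \log\int e^f\,\dd\mu \geq \int f\,\dd\nu - \log\int e^f\,\dd\mu$), but you should say so. Second, your choice $\alpha = M/\varepsilon$ degenerates when $M = 0$; simply take $\alpha = (M+1)/\varepsilon$ and $a = 2(M+1)/\varepsilon$ instead, which changes nothing in the estimates.
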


\begin{proof}
	Fix $\varepsilon > 0$, we will construct sets $K_n$ such that $\limsup_n \sup_{\nu \in B_n} \nu(K_n^c) \leq  2\varepsilon$ which suffices to establish the claim. Fix $a$ such that $a > M\varepsilon^{-1}$. By assumption there are sets $K_n  \subseteq \cX_n$ such that
	\begin{equation} \label{eqn:exp_tightness_proof_tightness}
	\limsup_n \sup_{\mu \in A_n} \frac{1}{r_n} \log \mu(K_{n}^c) \leq -a - 1.
	\end{equation}

	We introduce an auxiliary function in terms of which we can define relative entropy. Denote by $G : \bR^+ \rightarrow \bR^+$ the function 
	\begin{equation*}
	G(t) = \begin{cases}
	0 & \text{if } t = 0, \\
	t \log t & \text{if } t > 0.
	\end{cases}
	\end{equation*}
	Thus, we have by assumption that
	\begin{equation} \label{eqn:entropy_bound_in_tightness_proof}
	\sup_n \sup_{\nu \in B_n} \inf_{\mu \in A_n} \frac{1}{r_n} \int G\left( \frac{\dd \nu}{\dd \mu} \right) \dd \mu \leq M.
	\end{equation}

	Note that $G(t)t^{-1} =  \log(t)$ and that for $t \geq e^{r_nM \varepsilon^{-1}}$ we have that
	\begin{equation*}
	t \geq c_n := e^{r_n \frac{M}{\varepsilon}} \qquad \implies \qquad \frac{G(t)}{t} \geq \frac{G(c_n)}{c_n} = \log (c_n) = r_n \frac{M}{\varepsilon}.
	\end{equation*}
	Thus, for each $\nu \in B_n$ there is a $\mu \in A_n$ such that we obtain by \eqref{eqn:entropy_bound_in_tightness_proof} that
	\begin{equation} \label{eqn:excess_entropy_bound_in_tightness_proof}
	\int_{\left\{\frac{\dd \nu}{\dd \mu} \geq c_n\right\}} \frac{\dd \nu}{\dd \mu} \dd \nu  \leq \frac{\varepsilon}{r_nM} \int_{\left\{\frac{\dd \nu}{\dd \mu} \geq c_n\right\}} G\left(\frac{\dd \nu}{\dd \mu}\right) \dd \nu \leq \varepsilon.
	\end{equation}
	We use this inequality to bound $\nu_n[K_n^c]$. For sufficiently large $n$, we find by \eqref{eqn:exp_tightness_proof_tightness} and \eqref{eqn:excess_entropy_bound_in_tightness_proof} that
	\begin{align*}
	\nu(K_n^c) & = \int_{K_n^c \cap \left\{\frac{\dd \nu}{\dd \mu} \geq c_n\right\}} \frac{\dd \nu}{\dd \mu} \dd \mu +  \int_{K_n^c \cap \left\{\frac{\dd \nu}{\dd \mu} < c_n\right\}} \frac{\dd \nu}{\dd \mu} \dd \mu \\
	& \leq \varepsilon + \mu(K_n^c) c_n \\
	& \leq \varepsilon + e^{-r_na} c_n.
	\end{align*}
	As $a > M \varepsilon^{-1}$, the right-hand exponential term on the final line converges to $0$ independent from the choice of $\varepsilon$. This establishes the claim.
\end{proof}

\bibliographystyle{abbrv}
\bibliography{../KraaijBib}

\begin{thebibliography}{10}

\bibitem{AlBo06}
C.~D. Aliprantis and K.~C. Border.
\newblock {\em Infinite dimensional analysis}.
\newblock Springer, Berlin, third edition, 2006.
\newblock A hitchhiker's guide.

\bibitem{Bo07}
V.~I. Bogachev.
\newblock {\em Measure Theory}.
\newblock Springer-Verlag, 2007.

\bibitem{Ac00}
A.~de~Acosta.
\newblock A general non-convex large deviation result with applications to
  stochastic equations.
\newblock {\em Probability Theory and Related Fields}, 118(4):483--521, Dec
  2000.

\bibitem{Ac04}
A.~de~Acosta.
\newblock A general nonconvex large deviation result ii.
\newblock {\em Ann. Probab.}, 32(3):1873--1901, 07 2004.

\bibitem{DZ98}
A.~Dembo and O.~Zeitouni.
\newblock {\em Large Deviations Techniques and Applications}.
\newblock Springer-Verlag, second edition, 1998.

\bibitem{DuIiSo90}
P.~Dupuis, H.~Ishii, and H.~M. Soner.
\newblock A viscosity solution approach to the asymptotic analysis of queueing
  systems.
\newblock {\em The Annals of Probability}, 18(1):pp. 226--255, 1990.

\bibitem{EK86}
S.~N. Ethier and T.~G. Kurtz.
\newblock {\em Markov processes: Characterization and Convergence}.
\newblock Wiley, 1986.

\bibitem{FK06}
J.~Feng and T.~G. Kurtz.
\newblock {\em Large Deviations for Stochastic Processes}.
\newblock American Mathematical Society, 2006.

\bibitem{Fl77}
W.~H. Fleming.
\newblock Exit probabilities and optimal stochastic control.
\newblock {\em Applied Mathematics and Optimization}, 4:329--346, 1977.

\bibitem{FlSo86}
W.~H. Fleming and P.~E. Souganidis.
\newblock Pde-viscosity solution approach to some problems of large deviations.
\newblock {\em Annali della Scuola Normale Superiore di Pisa - Classe di
  Scienze}, Ser. 4, 13(2):171--192, 1986.

\bibitem{Ka58}
T.~Kato.
\newblock Remarks on pseudo-resolvents and infinitesimal generators of
  semi-groups.
\newblock {\em Proc. Japan Acad.}, 35:467--468, 1959.

\bibitem{Ko69}
G.~K{\"o}the.
\newblock {\em Topological Vector Spaces I}.
\newblock Springer-Verlag, 1969.

\bibitem{Kr19}
R.~C. Kraaij.
\newblock A general convergence result for viscosity solutions of
  {Hamilton-Jacobi} equations and non-linear semigroups.
\newblock {\em preprint; ArXiv:1903.04196}, 2019.

\bibitem{Kr19d}
R.~C. Kraaij.
\newblock Strict continuity of the transition semigroup for the solution of a
  well-posed martingale problem.
\newblock {\em preprint; ArXiv:1909.05113}, 2019.

\bibitem{Ku74}
T.~G. {Kurtz}.
\newblock {Convergence of sequences of semigroups of nonlinear operators with
  an application to gas kinetics.}
\newblock {\em {Trans. Am. Math. Soc.}}, 186:259--272, 1974.

\bibitem{Ma18}
M.~Mariani.
\newblock A {$\Gamma$}-convergence approach to large deviations.
\newblock {\em Ann. Sc. Norm. Super. Pisa Cl. Sci. (5)}, 18(3):951--976, 2018.

\bibitem{Pu97}
A.~Puhalskii.
\newblock Large deviations of semimartingales: a maxingale problem approach.
  {I}. {L}imits as solutions to a maxingale problem.
\newblock {\em Stochastics Stochastics Rep.}, 61(3-4):141--243, 1997.

\bibitem{Pu01}
A.~Puhalskii.
\newblock {\em Large deviations and idempotent probability}, volume 119 of {\em
  Chapman \& Hall/CRC Monographs and Surveys in Pure and Applied Mathematics}.
\newblock Chapman \& Hall/CRC, Boca Raton, FL, 2001.

\bibitem{RASe15}
F.~Rassoul-Agha and T.~Sepp{\"a}l{\"a}inen.
\newblock {\em A course on large deviations with an introduction to {G}ibbs
  measures}, volume 162 of {\em Graduate Studies in Mathematics}.
\newblock American Mathematical Society, Providence, RI, 2015.

\bibitem{Se72}
F.~D. Sentilles.
\newblock Bounded continuous functions on a completely regular space.
\newblock {\em Trans. Amer. Math. Soc.}, 168:311--336, 1972.

\bibitem{Sh85}
S.~Sheu.
\newblock Stochastic control and exit probabilities of jump processes.
\newblock {\em SIAM Journal on Control and Optimization}, 23(2):306--328, 1985.

\bibitem{Tr58}
H.~F. Trotter.
\newblock Approximation of semi-groups of operators.
\newblock {\em Pacific J. Math.}, 8:887--919, 1958.

\bibitem{Ca11}
J.~A. van Casteren.
\newblock {\em Markov processes, {F}eller semigroups and evolution equations},
  volume~12 of {\em Series on Concrete and Applicable Mathematics}.
\newblock World Scientific Publishing Co. Pte. Ltd., Hackensack, NJ, 2011.

\bibitem{Za92a}
S.~L. {Zabell}.
\newblock {Mosco convergence in locally convex spaces.}
\newblock {\em {J. Funct. Anal.}}, 110(1):226--246, 1992.

\end{thebibliography}
\end{document}